\definecolor{Red}{cmyk}{0,1,1,0}
\definecolor{verde}{cmyk}{1,0,1,0}
\definecolor{loka}{cmyk}{.5,0,1,.5}
\definecolor{azul}{cmyk}{1,1,0,0}
\numberwithin{equation}{section}
\newcommand{\be}{\begin{equation}}
\newcommand{\ee}{\end{equation}}
\newtheorem{theorem}{Theorem}
\newtheorem{definition}{Definition}
\newtheorem{lemma}{Lemma}
\newtheorem{remark}{Remark}
\begin{document}
\title{On the $\psi$-Hilfer fractional derivative}
\author{J. Vanterler da C. Sousa$^1$}
\address{$^1$ Department of Applied Mathematics, Institute of Mathematics,
 Statistics and Scientific Computation, University of Campinas --
UNICAMP, rua S\'ergio Buarque de Holanda 651,
13083--859, Campinas SP, Brazil\newline
e-mail: {\itshape \texttt{ra160908@ime.unicamp.br, capelas@ime.unicamp.br }}}

\author{E. Capelas de Oliveira$^1$}

\begin{abstract}In this paper we introduce a new fractional derivative with respect to another function the so-called $\psi$-Hilfer fractional derivative. We discuss some properties and important results of the fractional calculus. In this sense, we present some uniformly convergent sequence of function results and examples involving the Mittag-Leffler function with one parameter. Finally, we present a wide class of integrals and fractional derivatives, by means of the fractional integral with respect to another function and the $\psi$-Hilfer fractional derivative.

\vskip.5cm
\noindent
\emph{Keywords}: Fractional calculus, $\psi$-Hilfer fractional derivative, a class of fractional derivatives and integrals.
\newline 
MSC 2010 subject classifications. 26A03; 26A15; 26A33; 26A46 .
\end{abstract}
\maketitle

\section{Introduction}
Fractional calculus has caught the attention of many researchers over the last few decades as it is a solid and growing work both in theory and in its applications \cite{AHMJ,SAM}. The importance of fractional calculus growth is notable not only in pure and applied mathematics but also in physics, chemistry, engineering, biology, and other \cite{ROSA,FEE,ATA,COR,MAG,JEM,MAGIN,MERA,FJC}.

Since the beginning of the fractional calculus in 1695 \cite{GWL1,GWL2,GWL3}, there are numerous definitions of integrals and fractional derivatives, and over time, new derivatives and fractional integrals arise. These integrals and fractional derivatives have a different kernel and this makes the number of definitions \cite{AHMJ,SAM,RHM,UNT,UNT1,ECJT,AGRA} wide.

With the wide number of definitions of integrals and fractional derivatives, it was necessary to introduce a fractional derivative of a function $f$ with respect to another function, making use of the fractional derivative in the Riemann-Liouville sense, given by \cite{AHMJ}
\begin{eqnarray*}
\mathcal{D}_{a+}^{\alpha ;\psi }f\left( x\right) &=&\left( \frac{1}{\psi ^{\prime
}\left( x\right) }\frac{d}{dx}\right) ^{n}I_{a+}^{n-\alpha ;\psi }f\left(
x\right),
\end{eqnarray*}
where $n-1<\alpha<n$, $n=[\alpha]+1$ for $\alpha\notin \mathbb{N}$ and $n=\alpha$ for $\alpha\in\mathbb{N}$.
However, such a definition only encompasses the possible fractional derivatives that contain the differentiation operator acting on the integral operator.

In the same way, recently, Almeida \cite{RCA} using the idea of the fractional derivative in the Caputo sense, proposes a new fractional derivative called $\psi$-Caputo derivative with respect to another function $\psi$, which generalizes a class of fractional derivatives, whose definition is given by
\begin{equation*}
^{C}D_{a+}^{\alpha ;\psi }f\left( x\right) =I_{a+}^{n-\alpha ;\psi }\left( 
\frac{1}{\psi ^{\prime }\left( x\right) }\frac{d}{dx}\right) ^{n}f\left(
x\right)
\end{equation*}
where $n-1<\alpha<n$, $n=[\alpha]+1$ for $\alpha\notin \mathbb{N}$ and $n=\alpha$ for $\alpha\in\mathbb{N}$.

Although such definitions are very general, there exist the possibility of proposing a fractional differentiable operator that unifies these above operators and can overcome the wide number of definitions. In this perspective, we will use the Hilfer fractional derivative idea \cite{HILFER}, and propose a fractional differential operator of a function with respect to another $\psi$ function, the so-called $\psi$-Hilfer derivative. The advantage of the fractional operator proposed here, is the freedom of choice of the classical differential operator, that is, once it acts on the fractional integral operator, once the fractional integral operator acts on the differential operator. Thus, the class of fractional derivatives derived from the $\psi$-Hilfer operator is in fact larger, making the fractional operator a generalization of the fractional operators above defined.

The paper is organized as follows. In Section 2 we begin with the definition of some spaces of functions, among them, the weighted spaces. In this section, we present the definition of fractional integral of a function $f$ with respect to another function $\psi$ and the definitions of fractional derivatives $\psi$-Riemann-Liouville and $\psi$-Caputo. From these definitions, some important results were introduced for the development of the paper. In section 3, we present our main result, the definition of $\psi$-Hilfer fractional derivative. We discuss some properties of the fractional operator: the identity, is limited, the relation with the operators $\psi$-Riemann-Liouville and $\psi$-Caputo as well as the relation with the fractional integral operator. In Section 4, we present some results involving uniformly convergent sequence of functions and some examples involving Mittag-Leffler functions and the $\psi$ function. Section 5 deals with the class of integrals and fractional derivatives derived from the fractional $\psi$-Hilfer operator and the integral fractional operator. Concluding remarks close the paper.

\section{Preliminaries}

In this section, we present weighted spaces and some new concepts related to the fractional integrals and derivatives of a function $f$ with respect to another function $\psi$. In this sense, some results  that will be important in the course of the paper, will be mentioned.

Let $[a,b]$ $(0<a<b<\infty)$ be a finite interval on the half-axis $\mathbb{R}^{+}$ and $C[a,b]$, $AC^{n}[a,b]$, $C^{n}[a,b]$ be the spaces of continuous functions, $n$-times absolutely continuous, $n$-times continuous and continuously differentiable functions on $[a,b]$, respectively. 

The space of the continuous function $f$ on $[a,b]$ with the norm is defined by \cite{AHMJ}
\begin{equation*}
\left\Vert f\right\Vert _{C\left[ a,b\right] }=\underset{t\in \left[ a,b \right] }{\max }\left\vert f\left( t\right) \right\vert.
\end{equation*}

On the order hand, we have $n$-times absolutely continuous given by
\begin{equation*}
AC^{n}\left[ a,b\right] =\left\{ f:\left[ a,b\right] \rightarrow \mathbb{R}
;\text{ }f^{\left( n-1\right) }\in AC\left( \left[ a,b\right] \right)
\right\} .
\end{equation*}

The weighted space $C_{\gamma,\psi}[a,b]$ of functions $f$ on $[a,b]$ is defined by
\begin{equation*}
C_{\gamma ;\psi }\left[ a,b\right] =\left\{ f:\left( a,b\right] \rightarrow 
\mathbb{R};\text{ }\left( \psi \left( t\right) -\psi \left( a\right) \right)
^{\gamma }f\left( t\right) \in C\left[ a,b\right] \right\} ,\text{ }0\leq \gamma <1
\end{equation*}
with the norm
\begin{equation*}
\left\Vert f\right\Vert _{C_{\gamma ;\psi }\left[ a,b\right] }=\left\Vert
\left( \psi \left( t\right) -\psi \left( a\right) \right) ^{\gamma}f\left(
t\right) \right\Vert _{C\left[ a,b\right] }=\underset{t\in \left[ a,b\right] 
}{\max }\left\vert \left( \psi \left( t\right) -\psi \left( a\right) \right)
^{\gamma }f\left( t\right) \right\vert.
\end{equation*}

The weighted space $C_{\delta ;\psi }^{n}\left[ a,b\right]$ of function $f$ on $[a,b]$ is defined by
\begin{equation*}
C_{\gamma;\psi }^{n}\left[ a,b\right] =\left\{ f:\left( a,b\right]
\rightarrow \mathbb{R};\text{ }f\left( t\right) \in C^{n-1}\left[ a,b\right] ;\text{ }f^{\left( n\right) }\left( t\right) \in C_{\gamma;\psi }\left[ a,b\right] \right\} ,\text{ }0\leq \gamma <1
\end{equation*}
with the norm
\begin{equation*}
\left\Vert f\right\Vert _{C_{\gamma ;\psi }^{n}\left[ a,b\right] }=\overset{n-1}{\underset{k=0}{\sum }}\left\Vert f^{\left( k\right) }\right\Vert _{C\left[ a,b\right] }+\left\Vert f^{\left( n\right) }\right\Vert _{C_{\gamma ;\psi }\left[ a,b\right] }.
\end{equation*}

For $n=0$, we have, $C_{\gamma }^{0}\left[ a,b\right] =C_{\gamma }\left[ a,b\right] $.

The weighted space $C^{\alpha,\beta}_{\gamma,\psi}[a,b]$ is defined by
\begin{equation*}
C_{\gamma ;\psi }^{\alpha ,\beta }\left[ a,b\right] =\left\{ f\in C_{\gamma;\psi }\left[ a,b\right] ;\text{ }^{H}\mathbb{D}_{a+}^{\alpha ,\beta ;\psi }f\in C_{\gamma;\psi }\left[ a,b\right] \right\} ,\text{ }\gamma =\alpha +\beta \left( 1-\alpha\right) .
\end{equation*}

\begin{definition}{\rm \cite{AHMJ}} Let $[a,b]$ $(-\infty<a<b<\infty)$ be a finite interval on the real-axis $\mathbb{R}$. The Riemann-Liouville fractional integrals $(\mbox{left-sided and right-sided})$ of order $\alpha$, with $\alpha>0$, are defined by
\begin{equation}\label{I1}
^{RL}I_{a+}^{\alpha }f\left( x\right) :=\frac{1}{\Gamma \left( \alpha
\right) }\int_{a}^{x}\frac{f\left( t\right) }{\left( x-t\right) ^{1-\alpha }}dt, \text{ }x>a
\end{equation}
and
\begin{equation*}
^{RL}I_{b-}^{\alpha }f\left( x\right) :=\frac{1}{\Gamma \left( \alpha \right) }\int_{x}^{b}\frac{f\left( t\right) }{\left( t-x\right) ^{1-\alpha }}dt, \text{ }x<b,
\end{equation*}
respectively.
\end{definition}

\begin{definition}{\rm \cite{AHMJ}} Let $I=(a,b)$ and $f(x)\in AC^{n}(a,b)$ and $n-1<\alpha <n$, $n\in\mathbb{N}_{0}$. The Riemann-Liouville fractional derivatives $(\mbox{left-sided and right-sided})$ of function $f$ of order $\alpha$, are given by
\begin{eqnarray}\label{D1}
\mathcal{D}_{a+}^{\alpha }f\left( x\right)  &=&\left( \frac{d}{dx}\right)
^{n}I_{a+}^{n-\alpha }f\left( x\right)   \notag \\
&=&\frac{1}{\Gamma \left( n-\alpha \right) }\left( \frac{d}{dx}\right)
^{n}\int_{a}^{x}\left( x-t\right) ^{n-\alpha -1}f\left( t\right) dt
\end{eqnarray}
and
\begin{eqnarray*}
\mathcal{D}_{b-}^{\alpha }f\left( x\right)  &=&\left( -1\right) ^{n}\left( \frac{d}{dx}\right) ^{n}I_{b-}^{n-\alpha }f\left( x\right)   \notag \\
&=&\frac{\left( -1\right) ^{n}}{\Gamma \left( n-\alpha \right) }\left( \frac{d}{dx}\right) ^{n}\int_{x}^{b}\left( t-x\right) ^{n-\alpha -1}f\left(t\right) dt,
\end{eqnarray*}
respectively.
\end{definition}

\begin{definition}{\rm \cite{SAM,SRI}} The Hilfer fractional derivatives $(\mbox{left-sided and right-sided})$ $D^{\alpha,\beta}_{a+}$ of function $f\in C^{n}(a,b)$ of order $n-1<\alpha<n$ and type $0\leq \beta \leq 1$, are defined by
\begin{equation*}
D_{a+}^{\alpha ,\beta }f\left( x\right) =I_{a+}^{\gamma -\alpha }\left( 
\frac{d}{dx}\right) ^{n}I_{a+}^{\left( 1-\beta \right) \left( n-\alpha
\right) }f\left( x\right) 
\end{equation*}
and
\begin{equation*}
D_{b-}^{\alpha ,\beta }f\left( x\right) =I_{b-}^{\gamma -\alpha }\left( -\frac{d}{dx}\right) ^{n}I_{b-}^{\left( 1-\beta \right) \left( n-\alpha
\right) }f\left( x\right) 
\end{equation*}
where $I^{\alpha}_{a+}$ and $\mathcal{D}^{\alpha}_{a+}$ are Riemann-Liouville fractional integral and derivative given by {\rm Eq.(\ref{I1})} and {\rm Eq.(\ref{D1})}, respectively.
\end{definition}

In addition to the above definition of fractional integral, we will see in section 5 a class of integrals and fractional derivatives. Due to the huge amount of definitions, i.e., fractional operators, the following definition is a special approach when the kernel is unknown, involving a function $\psi $.

\begin{definition}{\rm \cite{AHMJ}} Let $(a,b)$ $(-\infty \leq a<b \leq \infty)$ be a finite or infinite interval of the real line $\mathbb{R}$ and $\alpha>0$. Also let $\psi(x)$ be an increasing and positive monotone function on $(a,b]$, having a continuous derivative $\psi'(x)$ on $(a,b)$. The left and right-sided fractional integrals of a function $f$ with respect to another function $\psi$ on $[a,b]$ are defined by
\begin{equation}\label{A}
I_{a+}^{\alpha ;\psi }f\left( x\right) =\frac{1}{\Gamma \left( \alpha
\right) }\int_{a}^{x}\psi ^{\prime }\left( t\right) \left( \psi \left(
x\right) -\psi \left( t\right) \right) ^{\alpha -1}f\left( t\right) dt
\end{equation}
and
\begin{equation}\label{I3}
I_{b-}^{\alpha ;\psi }f\left( x\right) =\frac{1}{\Gamma \left( \alpha
\right) }\int_{x}^{b}\psi ^{\prime }\left( t\right) \left( \psi \left(
t\right) -\psi \left( x\right) \right) ^{\alpha -1}f\left( t\right) dt.
\end{equation}
\end{definition}

We will present the lemmas and theorem below, for details of their respective proves, see {\rm \cite{AHMJ,RCA}}.

\begin{lemma} \label{LE} Let $\alpha>0$ and $\beta>0$. Then, we have the following semigroup property given by
\begin{equation*}
I_{a+}^{\alpha ;\psi }I_{a+}^{\beta ;\psi }f\left( x\right) =I_{a+}^{\alpha
+\beta ;\psi }f\left( x\right)
\end{equation*}
and
\begin{equation*}
I_{b-}^{\alpha ;\psi }I_{b-}^{\beta ;\psi }f\left( x\right) =I_{b-}^{\alpha
+\beta ;\psi }f\left( x\right).
\end{equation*}
\end{lemma}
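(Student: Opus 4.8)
The plan is to establish the left-sided identity by the classical Dirichlet interchange-of-integrals argument, collapsing the composition into a single fractional integral through a Beta-function evaluation; the right-sided identity then follows by the identical computation with the orientation reversed.

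First I would insert the definition \eqref{A} twice and write the composition as the iterated integral
\begin{equation*}
I_{a+}^{\alpha;\psi} I_{a+}^{\beta;\psi} f(x) = \frac{1}{\Gamma(\alpha)\Gamma(\beta)} \int_a^x \int_a^t \psi'(t)\psi'(s)\,(\psi(x)-\psi(t))^{\alpha-1}(\psi(t)-\psi(s))^{\beta-1} f(s)\,ds\,dt,
\end{equation*}
over the triangular region $a \le s \le t \le x$. Provided $f$ is such that the integrals converge absolutely --- which is automatic for $\alpha,\beta>0$ since the kernel singularities are integrable --- Fubini's theorem permits swapping the order of integration, yielding
\begin{equation*}
\frac{1}{\Gamma(\alpha)\Gamma(\beta)} \int_a^x \psi'(s) f(s) \left(\int_s^x \psi'(t)(\psi(x)-\psi(t))^{\alpha-1}(\psi(t)-\psi(s))^{\beta-1}\,dt\right) ds.
\end{equation*}

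The crux is the evaluation of the inner $t$-integral. Here I would use the substitution $u = (\psi(t)-\psi(s))/(\psi(x)-\psi(s))$, which is legitimate because $\psi$ is strictly increasing with continuous derivative on $(a,b)$; then $\psi'(t)\,dt = (\psi(x)-\psi(s))\,du$, the limits become $0$ and $1$, and the identities $\psi(t)-\psi(s)=u(\psi(x)-\psi(s))$ and $\psi(x)-\psi(t)=(1-u)(\psi(x)-\psi(s))$ reduce the inner integral to
\begin{equation*}
(\psi(x)-\psi(s))^{\alpha+\beta-1}\int_0^1 (1-u)^{\alpha-1} u^{\beta-1}\,du = (\psi(x)-\psi(s))^{\alpha+\beta-1}\,B(\beta,\alpha),
\end{equation*}
with $B(\beta,\alpha)=\Gamma(\alpha)\Gamma(\beta)/\Gamma(\alpha+\beta)$. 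Substituting this back, the prefactor $\Gamma(\alpha)\Gamma(\beta)$ cancels against the Beta function and one is left with $\frac{1}{\Gamma(\alpha+\beta)}\int_a^x \psi'(s)(\psi(x)-\psi(s))^{\alpha+\beta-1} f(s)\,ds$, which is exactly $I_{a+}^{\alpha+\beta;\psi}f(x)$.

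The only genuine subtlety I anticipate is the justification of the interchange of integrals: one must verify absolute integrability of the product kernel over the triangle. This holds because near the diagonal $t=s$ the factor $(\psi(t)-\psi(s))^{\beta-1}$ is integrable for $\beta>0$, and near $t=x$ the factor $(\psi(x)-\psi(t))^{\alpha-1}$ is integrable for $\alpha>0$, with the monotonicity of $\psi$ keeping these comparable to integrable powers of $t-s$ and $x-t$. The right-sided case is handled in the same way on the region $x \le t \le s \le b$, using the substitution $u=(\psi(x)-\psi(t))/(\psi(x)-\psi(s))$ to recover the same Beta-function evaluation.
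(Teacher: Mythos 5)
Your proof is correct, but note that the paper itself does not prove this lemma at all: its ``proof'' is simply a citation to the reference \cite{AHMJ} (the standard monograph on fractional calculus), where this semigroup property is established. What you have written is, in substance, the classical Dirichlet--Fubini argument that the cited reference uses: compose the two integrals, interchange the order of integration over the triangle $a\le s\le t\le x$, and collapse the inner integral to a Beta function via the substitution $u=(\psi(t)-\psi(s))/(\psi(x)-\psi(s))$, which is exactly the change of variables that makes the $\psi$-kernel behave like the ordinary Riemann--Liouville kernel. Your computation checks out: $\psi'(t)\,dt=(\psi(x)-\psi(s))\,du$, the two factors become $(1-u)^{\alpha-1}$ and $u^{\beta-1}$ times the appropriate powers of $\psi(x)-\psi(s)$, and $B(\beta,\alpha)=\Gamma(\alpha)\Gamma(\beta)/\Gamma(\alpha+\beta)$ cancels the prefactor. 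One small refinement on the Fubini justification: rather than comparing $(\psi(t)-\psi(s))^{\beta-1}$ to a power of $t-s$ (which requires $\psi'$ bounded away from zero, not guaranteed on the closed interval under the paper's hypotheses, since $\psi'$ is only assumed continuous and nonvanishing on the open interval), it is cleaner to invoke Tonelli directly: the kernel is nonnegative, so the interchange is unconditionally valid for $|f|$, and the same Beta-function evaluation applied to $|f|$ shows the iterated integral is finite, which then licenses Fubini for $f$ itself. With that adjustment your argument is a complete, self-contained proof of a statement the paper merely outsources.
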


\begin{proof}
See {\rm \cite{AHMJ}}.
\end{proof}

\begin{lemma} \label{LE1} Let $\alpha>0$ and $\delta>0$.
\begin{enumerate}
\item If $f(x)= \left( \psi \left( x\right) -\psi \left( a\right)
\right) ^{\delta -1}$, then 
\begin{equation*}
I_{a+}^{\alpha ;\psi }f(x)=\frac{\Gamma \left( \delta \right) }{\Gamma \left(
\alpha +\delta \right) }\left( \psi \left( x\right) -\psi \left( a\right)
\right) ^{\alpha +\delta -1}
\end{equation*}

\item If $g(x)=\left( \psi \left( b\right) -\psi \left( x\right)
\right) ^{\delta -1}$, then
\begin{equation*}
I_{b-}^{\alpha ;\psi }g(x)=\frac{\Gamma \left( \delta \right) }{\Gamma\left(
\alpha +\delta \right) }\left( \psi \left( b\right) -\psi \left( x\right)
\right) ^{\alpha +\delta -1}
\end{equation*}

\end{enumerate}
\end{lemma}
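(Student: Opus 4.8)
The plan is to prove part (1) directly from the definition of the $\psi$-fractional integral in Eq.~(\ref{A}), reducing the resulting integral to the Beta function by a single change of variables; part (2) then follows by an entirely symmetric computation based on Eq.~(\ref{I3}).

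First I would substitute $f(t) = (\psi(t) - \psi(a))^{\delta-1}$ into the definition, obtaining
\begin{equation*}
I_{a+}^{\alpha;\psi} f(x) = \frac{1}{\Gamma(\alpha)} \int_a^x \psi'(t)\,(\psi(x) - \psi(t))^{\alpha-1}\,(\psi(t) - \psi(a))^{\delta-1}\, dt.
\end{equation*}
The key step is the change of variables $u = \frac{\psi(t) - \psi(a)}{\psi(x) - \psi(a)}$, which is legitimate precisely because $\psi$ is increasing with continuous derivative on $(a,b)$, so that $u$ runs monotonically from $0$ to $1$ as $t$ runs from $a$ to $x$, and $\psi'(t)\, dt = (\psi(x) - \psi(a))\, du$. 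Under this substitution one has $\psi(t) - \psi(a) = u\,(\psi(x) - \psi(a))$ and $\psi(x) - \psi(t) = (1-u)\,(\psi(x) - \psi(a))$, so every factor involving $\psi(x) - \psi(a)$ can be pulled outside the integral.

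Collecting the powers of $\psi(x) - \psi(a)$ gives the exponent $(\alpha-1) + (\delta-1) + 1 = \alpha + \delta - 1$, and the remaining integral becomes $\int_0^1 u^{\delta-1}(1-u)^{\alpha-1}\, du$, which is exactly the Beta function $B(\delta,\alpha)$. Applying the classical identity $B(\delta,\alpha) = \frac{\Gamma(\delta)\,\Gamma(\alpha)}{\Gamma(\alpha+\delta)}$ cancels the $\Gamma(\alpha)$ in the denominator and yields the claimed formula. The only point requiring care---the main obstacle, though it is mild---is verifying that the hypotheses on $\psi$ make the change of variables a genuine bijection of $[a,x]$ onto $[0,1]$ and that the integrand is integrable near the endpoints when $\alpha,\delta < 1$; both follow from $\alpha,\delta > 0$ together with the monotonicity and $C^1$ regularity of $\psi$. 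For part (2), the symmetric substitution $u = \frac{\psi(b) - \psi(t)}{\psi(b) - \psi(x)}$ in Eq.~(\ref{I3}) reduces the right-sided integral to the very same Beta integral, producing the stated expression.
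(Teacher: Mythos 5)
Your proof is correct: the substitution $u=\bigl(\psi(t)-\psi(a)\bigr)/\bigl(\psi(x)-\psi(a)\bigr)$ (and its mirror image for the right-sided case) legitimately reduces the integral to the Beta function $B(\delta,\alpha)=\Gamma(\delta)\Gamma(\alpha)/\Gamma(\alpha+\delta)$, and the exponent bookkeeping $(\alpha-1)+(\delta-1)+1=\alpha+\delta-1$ checks out. The paper gives no proof of this lemma, deferring instead to the reference \cite{AHMJ}, and your Beta-function computation is exactly the standard argument found there, so the two approaches coincide.
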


\begin{proof}
See {\rm \cite{AHMJ}}.
\end{proof}

Here we evoke two definitions of fractional derivatives with respect to another function, both definitions being motivated by the fractional derivative of Riemann-Liouville and Caputo, in that order, choosing a specific function $\psi$.

\begin{definition}{\rm \cite{AHMJ}} Let $\psi'(x)\neq 0$ $(-\infty\leq a<x<b\leq \infty)$ and $\alpha> 0$, $n\in\mathbb{N}$. The Riemann-Liouville derivatives of a function $f$ with respect to $\psi$ of order $\alpha$ correspondent to the Riemann-Liouville, are defined by
\begin{eqnarray}\label{D2}
\mathcal{D}_{a+}^{\alpha ;\psi }f\left( x\right) &=&\left( \frac{1}{\psi ^{\prime
}\left( x\right) }\frac{d}{dx}\right) ^{n}I_{a+}^{n-\alpha ;\psi }f\left(
x\right)  \notag \\
&=&\frac{1}{\Gamma \left( n-\alpha \right) }\left( \frac{1}{\psi ^{\prime
}\left( x\right) }\frac{d}{dx}\right) ^{n}\int_{a}^{x}\psi ^{\prime }\left(
t\right) \left( \psi \left( x\right) -\psi \left( t\right) \right)
^{n-\alpha -1}f\left( t\right) dt \notag \\
\end{eqnarray}
and
\begin{eqnarray}\label{D3}
\mathcal{D}_{b-}^{\alpha ;\psi }f\left( x\right) &=&\left( -\frac{1}{\psi ^{\prime
}\left( x\right) }\frac{d}{dx}\right) ^{n}I_{a+}^{n-\alpha ;\psi }f\left(
x\right)  \notag \\
&=&\frac{1}{\Gamma \left( n-\alpha \right) }\left( -\frac{1}{\psi ^{\prime
}\left( x\right) }\frac{d}{dx}\right) ^{n}\int_{x}^{b}\psi ^{\prime }\left(
t\right) \left( \psi \left( t\right) -\psi \left( x\right) \right)
^{n-\alpha -1}f\left( t\right) dt,\notag \\
\end{eqnarray}
where $n=[\alpha]+1$.
\end{definition}

\begin{definition}{\rm \cite{RCA}} Let $\alpha>0$, $n\in\mathbb{N}$, $I=[a,b]$ is the interval $-\infty\leq a<b\leq\infty$, $f,\psi\in C^{n}([a,b],\mathbb{R})$ two functions such that $\psi$ is increasing and $\psi'(x)\neq 0$, for all $x\in I$. The left $\psi$-Caputo fractional derivative of $f$ of order $\alpha$ is given by
\begin{equation}\label{D4}
^{C}D_{a+}^{\alpha ;\psi }f\left( x\right) =I_{a+}^{n-\alpha ;\psi }\left( 
\frac{1}{\psi ^{\prime }\left( x\right) }\frac{d}{dx}\right) ^{n}f\left(
x\right)
\end{equation}
and the right $\psi$-Caputo fractional derivative of $f$ by
\begin{equation}\label{D5}
^{C}D_{b-}^{\alpha ;\psi }f\left( x\right) =I_{b-}^{n-\alpha ;\psi }\left( -%
\frac{1}{\psi ^{\prime }\left( x\right) }\frac{d}{dx}\right) ^{n}f\left(
x\right)
\end{equation}
where $n=[\alpha]+1$ for $\alpha\notin \mathbb{N}$ and $n=\alpha$ for $\alpha\in\mathbb{N}$.
\end{definition}

\begin{lemma}\label{LE2} Let $\alpha>0$ and $\delta>0$.
\begin{enumerate}
\item If $f(x)= \left( \psi \left( x\right) -\psi \left( a\right)
\right) ^{\delta -1}$, then 
\begin{equation*}
\mathcal{D}_{a+}^{\alpha ;\psi }f(x)=\frac{\Gamma \left( \delta \right) }{\Gamma
\left( \delta -\alpha \right) }\left( \psi \left( x\right) -\psi \left(
a\right) \right) ^{\alpha +\delta -1}.
\end{equation*}

\item If $g(x)=\left( \psi \left( b\right) -\psi \left( x\right)
\right) ^{\delta -1}$, then
\begin{equation*}
\mathcal{D}_{b-}^{\alpha ;\psi }g(x)=\frac{\Gamma \left( \delta \right) }{\Gamma
\left( \delta -\alpha \right) }\left( \psi \left( b\right) -\psi \left(
x\right) \right) ^{\alpha +\delta -1}.
\end{equation*}

\end{enumerate}
\end{lemma}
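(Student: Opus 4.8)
The plan is to compute $\mathcal{D}_{a+}^{\alpha;\psi}f$ directly from its definition in Eq.~(\ref{D2}), namely $\mathcal{D}_{a+}^{\alpha;\psi}f(x)=\left(\frac{1}{\psi'(x)}\frac{d}{dx}\right)^{n}I_{a+}^{n-\alpha;\psi}f(x)$ with $n=[\alpha]+1$, by first evaluating the inner fractional integral via Lemma~\ref{LE1} and then applying the $\psi$-weighted differential operator $n$ times. First I would apply Lemma~\ref{LE1}(1) with the order $n-\alpha$ in place of $\alpha$; since $n>\alpha$ we have $n-\alpha>0$, so the lemma applies and yields
\begin{equation*}
I_{a+}^{n-\alpha;\psi}f(x)=\frac{\Gamma(\delta)}{\Gamma(n-\alpha+\delta)}\left(\psi(x)-\psi(a)\right)^{n-\alpha+\delta-1}.
\end{equation*}

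The key computational observation is that the operator $\frac{1}{\psi'(x)}\frac{d}{dx}$ acts as a $\psi$-weighted derivative that lowers a power of $\psi(x)-\psi(a)$ cleanly: by the chain rule,
\begin{equation*}
\frac{1}{\psi'(x)}\frac{d}{dx}\left(\psi(x)-\psi(a)\right)^{\mu}=\mu\left(\psi(x)-\psi(a)\right)^{\mu-1},
\end{equation*}
because the factor $\psi'(x)$ produced by differentiating the inside cancels the prefactor $1/\psi'(x)$. Iterating this identity $n$ times on the exponent $\mu=n-\alpha+\delta-1$ lowers the power to $\delta-\alpha-1$ and produces the coefficient $(n-\alpha+\delta-1)(n-\alpha+\delta-2)\cdots(\delta-\alpha)$, which telescopes into the ratio $\Gamma(n-\alpha+\delta)/\Gamma(\delta-\alpha)$. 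Combining this with the constant from the integral step, the factor $\Gamma(n-\alpha+\delta)$ cancels and one is left with the stated Gamma-function ratio $\Gamma(\delta)/\Gamma(\delta-\alpha)$, multiplying the power $\left(\psi(x)-\psi(a)\right)^{\delta-\alpha-1}$.

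The right-sided statement (2) follows by the same argument applied to the operator $\left(-\frac{1}{\psi'(x)}\frac{d}{dx}\right)^{n}$ together with $I_{b-}^{n-\alpha;\psi}$ and Lemma~\ref{LE1}(2): differentiating $\left(\psi(b)-\psi(x)\right)^{\mu}$ brings down an extra factor $-\psi'(x)$, which combined with the prefactor $-1/\psi'(x)$ again gives $\mu\left(\psi(b)-\psi(x)\right)^{\mu-1}$, so every sign is absorbed and the telescoping of coefficients is identical. I do not expect a genuine obstacle here; the only points requiring care are the bookkeeping of the product of lowered-exponent coefficients and its recognition as a quotient of Gamma functions, and checking the hypothesis $n-\alpha+\delta>0$ of Lemma~\ref{LE1}, which holds automatically since $\delta>0$ and $n-\alpha>0$.
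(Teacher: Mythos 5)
Your proposal is correct, but it cannot be "the same approach as the paper" because the paper offers no argument at all for Lemma \ref{LE2}: its proof is the single line ``See \cite{AHMJ}''. Your computation --- evaluate the inner integral $I_{a+}^{n-\alpha;\psi}f$ via Lemma \ref{LE1} (applicable since $n-\alpha>0$ and $\delta>0$), then apply $\frac{1}{\psi'(x)}\frac{d}{dx}$ repeatedly using the identity $\frac{1}{\psi'(x)}\frac{d}{dx}\left(\psi(x)-\psi(a)\right)^{\mu}=\mu\left(\psi(x)-\psi(a)\right)^{\mu-1}$, and telescope the falling factors into $\Gamma(n-\alpha+\delta)/\Gamma(\delta-\alpha)$ --- is the standard direct verification, and what it buys is a self-contained proof where the paper merely outsources to the literature. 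The one degenerate case worth a sentence is $\delta-\alpha\in\{0,-1,-2,\dots\}$: there your product of coefficients contains the factor $0$, which is consistent with reading $1/\Gamma(\delta-\alpha)=0$ at the poles of $\Gamma$, so the formula remains valid. Your treatment of the right-sided operator (the sign from differentiating $\psi(b)-\psi(x)$ cancelling the sign in $-\frac{1}{\psi'(x)}\frac{d}{dx}$) is likewise correct.

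One substantive point you should make explicit: your calculation produces the exponent $\delta-\alpha-1$, whereas the lemma as printed asserts $\alpha+\delta-1$. Your exponent is the correct one --- a derivative of order $\alpha$ must lower the power by $\alpha$, consistently with the coefficient $\Gamma(\delta)/\Gamma(\delta-\alpha)$ --- and it is the form the paper itself relies on later: the proof of Lemma \ref{LE5} uses $\mathcal{D}_{a+}^{\gamma;\psi}\left(\psi(x)-\psi(a)\right)^{\delta-1}=\frac{\Gamma(\delta)}{\Gamma(\delta-\gamma)}\left(\psi(x)-\psi(a)\right)^{\delta-\gamma-1}$. The printed exponent in Lemma \ref{LE2} (in both items) is therefore a typo, apparently copied from the integral formula of Lemma \ref{LE1}; what you have proved is the corrected statement.
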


\begin{proof}
See {\rm \cite{AHMJ}}.
\end{proof}

\begin{theorem}\label{TEO1} If $f\in C^{n}[a,b]$ and $\alpha>0$, then
\begin{equation*}
^{C}D_{a+}^{\alpha ;\psi }f\left( x\right) =\mathcal{D}_{a+}^{\alpha ;\psi }f\left(
x\right) \left[ f\left( x\right) -\overset{n-1}{\underset{k=0}{\sum }}\frac{1%
}{k!}\left( \psi \left( x\right) -\psi \left( a\right) \right) ^{k}f_{\psi
}^{\left[ k\right] }\left( a\right) \right] 
\end{equation*}
and
\begin{equation*}
^{C}D_{b-}^{\alpha ;\psi }f\left( x\right) =\mathcal{D}_{b-}^{\alpha ;\psi }f\left(
x\right) \left[ f\left( x\right) -\overset{n-1}{\underset{k=0}{\sum }}\frac{1%
}{k!}\left( \psi \left( b\right) -\psi \left( x\right) \right) ^{k}f_{\psi
}^{\left[ k\right] }\left( b\right) \right] .
\end{equation*}
\end{theorem}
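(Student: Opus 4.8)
Reading the right-hand side as the operator $\mathcal{D}_{a+}^{\alpha;\psi}$ applied to the bracketed function, and writing $f_{\psi}^{[k]}(a)=\big(\tfrac{1}{\psi'(x)}\tfrac{d}{dx}\big)^{k}f(x)\big|_{x=a}$, I would introduce the shorthand $D_{\psi}:=\tfrac{1}{\psi'(x)}\tfrac{d}{dx}$ and set
\[
g(x):=f(x)-\overset{n-1}{\underset{k=0}{\sum}}\frac{1}{k!}\big(\psi(x)-\psi(a)\big)^{k}f_{\psi}^{[k]}(a),
\]
i.e.\ $g$ is $f$ minus its $\psi$-Taylor polynomial of order $n-1$ about $a$. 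With this notation the whole statement reduces to the single identity $\mathcal{D}_{a+}^{\alpha;\psi}g=\,^{C}D_{a+}^{\alpha;\psi}f$, and the plan is to prove this using the semigroup law of Lemma \ref{LE} together with a $\psi$-analogue of Taylor's formula.

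The argument rests on two ingredients. First I would record the $\psi$-version of the fundamental theorem of calculus: since $I_{a+}^{1;\psi}h(x)=\int_{a}^{x}\psi'(t)h(t)\,dt$, differentiating gives $D_{\psi}I_{a+}^{1;\psi}h=h$, and iterating this (writing $I_{a+}^{n;\psi}=I_{a+}^{1;\psi}\cdots I_{a+}^{1;\psi}$ via Lemma \ref{LE}) yields $D_{\psi}^{n}I_{a+}^{n;\psi}=\mathrm{id}$. Second I would establish the $\psi$-Taylor formula with integral remainder, namely $g(x)=I_{a+}^{n;\psi}f_{\psi}^{[n]}(x)$, where $f_{\psi}^{[n]}=D_{\psi}^{n}f$.

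Granting these two facts, the computation is immediate. Using the definition of $\mathcal{D}_{a+}^{\alpha;\psi}$, then the $\psi$-Taylor formula, and then the semigroup property of Lemma \ref{LE} to split $I_{a+}^{(n-\alpha)+n;\psi}=I_{a+}^{n;\psi}I_{a+}^{n-\alpha;\psi}$, I obtain
\begin{align*}
\mathcal{D}_{a+}^{\alpha;\psi}g(x)
&=D_{\psi}^{n}I_{a+}^{n-\alpha;\psi}g(x)
=D_{\psi}^{n}I_{a+}^{n-\alpha;\psi}I_{a+}^{n;\psi}f_{\psi}^{[n]}(x)\\
&=D_{\psi}^{n}I_{a+}^{n;\psi}\big(I_{a+}^{n-\alpha;\psi}f_{\psi}^{[n]}(x)\big)
=I_{a+}^{n-\alpha;\psi}f_{\psi}^{[n]}(x)
=\,^{C}D_{a+}^{\alpha;\psi}f(x),
\end{align*}
where the penultimate equality is $D_{\psi}^{n}I_{a+}^{n;\psi}=\mathrm{id}$ and the last is the definition \eqref{D4} of the $\psi$-Caputo derivative. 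The right-sided identity follows by the symmetric argument, replacing $I_{a+}$, $D_{\psi}$ and $(\psi(x)-\psi(a))^{k}$ by $I_{b-}$, $-D_{\psi}$ and $(\psi(b)-\psi(x))^{k}$.

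The main obstacle is the $\psi$-Taylor formula $g(x)=I_{a+}^{n;\psi}f_{\psi}^{[n]}(x)$. I would prove it either by $n$ successive integrations by parts in the remainder integral $\tfrac{1}{(n-1)!}\int_{a}^{x}\psi'(t)(\psi(x)-\psi(t))^{n-1}f_{\psi}^{[n]}(t)\,dt$, each step lowering one power of the kernel and generating precisely the boundary term $\tfrac{1}{k!}(\psi(x)-\psi(a))^{k}f_{\psi}^{[k]}(a)$; or, more cleanly, by a uniqueness argument. For the latter, note that applying $D_{\psi}^{n}$ to either side gives $f_{\psi}^{[n]}$ (on the left because $D_{\psi}(\psi(x)-\psi(a))^{k}=k(\psi(x)-\psi(a))^{k-1}$ kills the polynomial, on the right by $D_{\psi}^{n}I_{a+}^{n;\psi}=\mathrm{id}$), while both sides and their $\psi$-derivatives $D_{\psi}^{j}$, $0\le j\le n-1$, vanish at $x=a$; since $D_{\psi}^{n}h\equiv0$ with $D_{\psi}^{j}h(a)=0$ for $j\le n-1$ forces $h\equiv0$ (integrate up one order at a time, using $\psi'\neq0$), the two expressions coincide. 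The only care needed is regularity: $f\in C^{n}[a,b]$ together with $\psi\in C^{n}$ and $\psi'\neq0$ guarantees $f_{\psi}^{[n]}$ is continuous, so all the integrals and $\psi$-differentiations above are legitimate and the semigroup law applies.
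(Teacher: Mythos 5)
Your proof is correct, but there is nothing in the paper itself to compare it against: the paper's entire ``proof'' of Theorem \ref{TEO1} is the single line ``See \cite{RCA}'', deferring to the reference from which the statement is quoted. So your argument supplies a proof the paper omits. Two remarks on how it relates to that source. First, you read the statement correctly: as printed it contains a typo (it parses as a product $\mathcal{D}_{a+}^{\alpha ;\psi }f(x)\,[\cdots]$), and the intended meaning is the operator $\mathcal{D}_{a+}^{\alpha ;\psi }$ applied to the bracketed function, which is exactly what you prove. Second, your route and the one in the cited reference are the same mechanism packaged differently. Writing $T_{n-1}(x)=\sum_{k=0}^{n-1}\frac{1}{k!}(\psi(x)-\psi(a))^{k}f_{\psi}^{[k]}(a)$, the reference integrates by parts $n$ times inside $I_{a+}^{n-\alpha ;\psi}\left[f-T_{n-1}\right]$, the boundary terms vanishing because $(f-T_{n-1})_{\psi}^{[k]}(a)=0$ for $0\le k\le n-1$; this yields $I_{a+}^{2n-\alpha ;\psi}f_{\psi}^{[n]}$, after which $D_{\psi}^{n}$ is applied exactly as in your final display. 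You instead isolate that bookkeeping into a standalone $\psi$-Taylor remainder lemma $f-T_{n-1}=I_{a+}^{n;\psi}f_{\psi}^{[n]}$ and then invoke the semigroup law of Lemma \ref{LE} together with $D_{\psi}^{n}I_{a+}^{n;\psi}=\mathrm{id}$; your uniqueness argument for the lemma ($D_{\psi}^{n}h\equiv 0$ and $D_{\psi}^{j}h(a)=0$ for $j\le n-1$ force $h\equiv 0$, since $\psi'\neq 0$) is a complete justification, and arguably cleaner than redoing the integrations by parts. The regularity point you flag at the end — $f,\psi\in C^{n}$ with $\psi'\neq 0$, so that $f_{\psi}^{[n]}$ is continuous and both Lemma \ref{LE} and the fundamental theorem of calculus apply — is precisely what makes each step legitimate.
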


\begin{proof}
See {\rm \cite{RCA}}.
\end{proof}

\section{$\psi$-Hilfer fractional derivative}

From the definition of fractional derivative in the Riemann-Liouville sense and the Caputo sense \cite{AHMJ}, was introduce the Hilfer fractional derivative \cite{SAM,SRI}, which unifies both derivatives. Motivated by the definition of Hilfer, in this section we present our main result, the so-called $\psi$-Hilfer fractional derivative of a function $f$ with respect to another function. From the fractional derivative $\psi$-Hilfer, we evoke some relations between the $\psi$-fractional integral and the fractional derivative $\psi$-Hilfer, which is a limited operator. In this sense, we study the law of exponents and other important results of the fractional calculus.

\begin{definition} Let $n-1<\alpha <n$ with $n\in\mathbb{N}$, $I=[a,b]$ is the interval such that $-\infty\leq a<b\leq\infty$ and $f,\psi\in C^{n}([a,b],\mathbb{R})$ two functions such that $\psi$ is increasing and $\psi'(x)\neq 0$, for all $x\in I$. The  $\psi$-Hilfer fractional derivative $(\mbox{left-sided and right-sided})$ $^{H}\mathbb{D}_{a+}^{\alpha ,\beta ;\psi }\left( \cdot\right)$ and $^{H}\mathbb{D}_{b-}^{\alpha ,\beta ;\psi }\left( \cdot\right)$ of function of order $\alpha$ and type $0\leq \beta \leq 1$, are defined by
\begin{equation}\label{HIL}
^{H}\mathbb{D}_{a+}^{\alpha ,\beta ;\psi }f\left( x\right) =I_{a+}^{\beta \left(
n-\alpha \right) ;\psi }\left( \frac{1}{\psi ^{\prime }\left( x\right) }\frac{d}{dx}\right) ^{n}I_{a+}^{\left( 1-\beta \right) \left( n-\alpha
\right) ;\psi }f\left( x\right)
\end{equation}
and
\begin{equation}\label{HIL1}
^{H}\mathbb{D}_{b-}^{\alpha ,\beta ;\psi }f\left( x\right) =I_{b-}^{\beta
\left( n-\alpha \right) ;\psi }\left( -\frac{1}{\psi ^{\prime }\left(
x\right) }\frac{d}{dx}\right) ^{n}I_{b-}^{\left( 1-\beta \right) \left(
n-\alpha \right) ;\psi }f\left( x\right).
\end{equation}

The $\psi$-Hilfer fractional derivative as above defined, can be written in the following form
\begin{equation}\label{HIL2}
^{H}\mathbb{D}_{a+}^{\alpha ,\beta ;\psi }f\left( x\right) =I_{a+}^{\gamma -\alpha ;\psi }\mathcal{D}_{a+}^{\gamma ;\psi }f\left( x\right) 
\end{equation}
and
\begin{equation}\label{HIL3}
^{H}\mathbb{D}_{b-}^{\alpha ,\beta ;\psi }f\left( x\right) =I_{b-}^{\gamma -\alpha ;\psi }\left( -1\right) ^{n}\mathcal{D}_{b-}^{\gamma ;\psi }f\left( x\right),
\end{equation}
with $\gamma =\alpha +\beta \left( n-\alpha \right) $ and $I^{\gamma-\alpha;\psi}_{a+}(\cdot)$, $D^{\gamma;\psi}_{a+}(\cdot)$, $I^{\gamma-\alpha;\psi}_{b-}(\cdot)$, $D^{\gamma;\psi}_{b-}(\cdot)$ as defined in {\rm Eq.(\ref{A})}, {\rm Eq.(\ref{D2})}, {\rm Eq.(\ref{I3})} and {\rm Eq.(\ref{D3})}.
\end{definition}

To simplify the notation and the prove of some results, we will introduce the following notation:
\begin{equation*}
f_{\psi +}^{\left[ n\right] }f(x) :=\left( \frac{1}{\psi ^{\prime
}\left( x\right) }\frac{d}{dx}\right) ^{n}f\left( x\right) \text{ and  }%
f_{\psi -}^{\left[ n\right] }f(x) :=\left( -\frac{1}{\psi
^{\prime }\left( x\right) }\frac{d}{dx}\right) ^{n}f\left( x\right) .
\end{equation*}

On the order hand, with this notation we have
\begin{equation*}
\mathcal{D}_{a+}^{\gamma ;\psi }f(x) :=\left( \frac{1}{\psi ^{\prime
}\left( x\right) }\frac{d}{dx}\right) ^{n}I_{a+}^{\left( 1-\beta \right)
\left( n-\alpha \right) ;\psi }f\left( x\right) 
\end{equation*}%
and 
\begin{equation*}
\mathcal{D}_{b-}^{\gamma ;\psi }f\left( x\right) :=\left( -\frac{1}{\psi ^{\prime
}\left( x\right) }\frac{d}{dx}\right) ^{n}I_{b-}^{\left( 1-\beta \right)
\left( n-\alpha \right) ;\psi }f\left( x\right) .
\end{equation*}

Note that, 
\begin{equation*}
f_{\psi +}^{\left[ n\right] }f\left( x\right) =\underset{\alpha \rightarrow
n^{-}}{\lim }\mathcal{D}_{a+}^{\gamma ;\psi }f\left( x\right) \text{ and }f_{\psi -}^{%
\left[ n\right] }f\left( x\right) =\underset{\alpha \rightarrow n^{-}}{\lim }%
\mathcal{D}_{b-}^{\gamma ;\psi }f\left( x\right) .
\end{equation*}

In particular, when $0<\alpha <1$ and $0\leq \beta \leq 1$, we have 
\begin{equation*}
^{H}\mathbb{D}_{a+}^{\alpha ,\beta ;\psi }f\left( x\right) =\frac{1}{\Gamma \left(
\gamma -\alpha \right) }\int_{a}^{x}\left( \psi \left( x\right) -\psi \left(
t\right) \right) ^{\gamma -\alpha -1}\mathcal{D}_{a+}^{\gamma ;\psi }f\left( t\right)
dt,
\end{equation*}
with $\gamma =\alpha +\beta \left( 1-\alpha \right) $ and $\mathcal{D}_{a+}^{\gamma;\psi }\left( \cdot \right) $ is $\psi$-Riemann-Liouville fractional derivative.

\begin{theorem} Suppose that $f,\psi \in C^{n+1}\left[ a,b\right] .$ Then, for all $n-1<\alpha <n$ and $0\leq \beta \leq 1,$ we have
\begin{eqnarray}
^{H}\mathbb{D}_{a+}^{\alpha ,\beta ;\psi }f\left( x\right)  &=&\frac{\left(
\psi \left( x\right) -\psi \left( a\right) \right) ^{\gamma -\alpha }}{%
\Gamma \left( \gamma -\alpha +1\right) }\mathcal{D}_{a+}^{\gamma ;\psi
}\left( a\right)  \label{BA} \\
&&+\frac{1}{\Gamma \left( \gamma -\alpha +1\right) }\int_{a}^{x}\left( \psi
\left( x\right) -\psi \left( t\right) \right) ^{\gamma -\alpha }\frac{d}{dt}%
\mathcal{D}_{a+}^{\gamma ;\psi }f\left( t\right) dt \notag 
\end{eqnarray}%
and 
\begin{eqnarray*}
^{H}\mathbb{D}_{b-}^{\alpha ,\beta ;\psi }f\left( x\right)  &=&\frac{\left(
-1\right) ^{n}\left( \psi \left( b\right) -\psi \left( x\right) \right)
^{\gamma -\alpha }}{\Gamma \left( \gamma -\alpha +1\right) }\mathcal{D}%
_{b-}^{\gamma ;\psi }\left( b\right)  \\
&&-\frac{1}{\Gamma \left( \gamma -\alpha +1\right) }\int_{b}^{b}\left( \psi
\left( t\right) -\psi \left( x\right) \right) ^{\gamma -\alpha }\left(
-1\right) ^{n}\frac{d}{dt}\mathcal{D}_{b-}^{\gamma ;\psi }f\left( t\right)
dt.
\end{eqnarray*}
\end{theorem}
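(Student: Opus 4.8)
The plan is to start from the equivalent representation \eqref{HIL2}, namely $^{H}\mathbb{D}_{a+}^{\alpha,\beta;\psi}f(x)=I_{a+}^{\gamma-\alpha;\psi}\mathcal{D}_{a+}^{\gamma;\psi}f(x)$, and to obtain \eqref{BA} by a single integration by parts. Writing out the $\psi$-fractional integral from Eq.~\eqref{A} with order $\gamma-\alpha$ and abbreviating $g(t):=\mathcal{D}_{a+}^{\gamma;\psi}f(t)$, we have
\begin{equation*}
^{H}\mathbb{D}_{a+}^{\alpha,\beta;\psi}f(x)=\frac{1}{\Gamma(\gamma-\alpha)}\int_{a}^{x}\psi'(t)\left(\psi(x)-\psi(t)\right)^{\gamma-\alpha-1}g(t)\,dt .
\end{equation*}
The crucial observation is that the kernel is itself an exact derivative in $t$, namely
\begin{equation*}
\psi'(t)\left(\psi(x)-\psi(t)\right)^{\gamma-\alpha-1}=-\frac{1}{\gamma-\alpha}\frac{d}{dt}\left(\psi(x)-\psi(t)\right)^{\gamma-\alpha},
\end{equation*}
which is valid since $\gamma-\alpha=\beta(n-\alpha)>0$.

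First I would substitute this identity into the integral and use $\Gamma(\gamma-\alpha+1)=(\gamma-\alpha)\Gamma(\gamma-\alpha)$ to absorb the constant into the prefactor. Then I would integrate by parts, differentiating $g$ and integrating the $\frac{d}{dt}$ factor, which produces the boundary term $\left[(\psi(x)-\psi(t))^{\gamma-\alpha}g(t)\right]_{a}^{x}$ together with the remaining integral $\int_{a}^{x}(\psi(x)-\psi(t))^{\gamma-\alpha}g'(t)\,dt$. At the upper limit $t=x$ the factor $(\psi(x)-\psi(x))^{\gamma-\alpha}$ vanishes because $\gamma-\alpha>0$, so only the contribution at $t=a$ survives; tracking the two minus signs turns it into the first term of \eqref{BA}, with $g(a)=\mathcal{D}_{a+}^{\gamma;\psi}f(a)$, while the leftover integral is exactly the second term. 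The right-sided identity follows by the same computation performed on \eqref{HIL3}, the orientation of the kernel supplying the factor $(-1)^{n}$ already carried by $\mathcal{D}_{b-}^{\gamma;\psi}$.

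The step that requires care is the legitimacy of the integration by parts, which hinges on $g=\mathcal{D}_{a+}^{\gamma;\psi}f$ being continuously differentiable on $[a,b]$; this is precisely what the strengthened hypothesis $f,\psi\in C^{n+1}[a,b]$ secures, since $\mathcal{D}_{a+}^{\gamma;\psi}f=\left(\tfrac{1}{\psi'}\tfrac{d}{dx}\right)^{n}I_{a+}^{(1-\beta)(n-\alpha);\psi}f$ already involves $n$ applications of $\tfrac{1}{\psi'}\tfrac{d}{dx}$ and one further derivative is needed to form $g'$. The main obstacle is therefore the endpoint behaviour: one must confirm that $g(a)$ is finite and that $\int_{a}^{x}(\psi(x)-\psi(t))^{\gamma-\alpha}g'(t)\,dt$ converges, i.e.\ that no non-integrable singularity is introduced near $t=a$. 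Since $\gamma-\alpha>0$ the kernel $(\psi(x)-\psi(t))^{\gamma-\alpha}$ is bounded and continuous up to $t=a$, so finiteness reduces to the boundedness of $g$ and $g'$ guaranteed by the regularity hypothesis. The degenerate case $\beta=0$, where $\gamma-\alpha=0$ and the identity for the kernel fails, I would treat separately: there \eqref{BA} collapses by the fundamental theorem of calculus to $^{H}\mathbb{D}_{a+}^{\alpha,0;\psi}f=\mathcal{D}_{a+}^{\alpha;\psi}f$, consistent with $\gamma=\alpha$.
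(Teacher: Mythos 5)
Your proof is correct and takes essentially the same route as the paper: both start from the representation $^{H}\mathbb{D}_{a+}^{\alpha ,\beta ;\psi }f=I_{a+}^{\gamma -\alpha ;\psi }\mathcal{D}_{a+}^{\gamma ;\psi }f$ of Eq.~(\ref{HIL2}) and perform a single integration by parts, differentiating $v(t)=\mathcal{D}_{a+}^{\gamma ;\psi }f(t)$ and integrating the kernel $u'(t)=\psi '(t)\left( \psi (x)-\psi (t)\right) ^{\gamma -\alpha -1}$, with the boundary term at $t=x$ vanishing and the one at $t=a$ producing the leading term. Your added care about regularity and about the degenerate case $\beta =0$ (where $\gamma -\alpha =0$ and the kernel is no longer an exact derivative) sharpens the paper's terse argument but does not alter its substance.
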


\begin{proof} In fact, integrating by parts {\rm {Eq.(\ref{HIL2})}}, with $u^{\prime }\left(
t\right) =\psi ^{\prime }\left( t\right) \left( \psi \left( x\right) -\psi
\left( t\right) \right) ^{\gamma -\alpha -1}$ and $v\left( t\right)
=\mathcal{D}_{a+}^{\gamma ;\psi }f\left( t\right) ,$ we get 
\begin{eqnarray*}
^{H}\mathbb{D}_{a+}^{\alpha ,\beta ;\psi }f\left( x\right)  &=&\frac{1}{%
\Gamma \left( \gamma -\alpha \right) }\int_{a}^{x}\psi ^{\prime }\left(
t\right) \left( \psi \left( x\right) -\psi \left( t\right) \right) ^{\gamma
-\alpha -1}\mathcal{D}_{a+}^{\gamma ;\psi }f\left( t\right) dt \\
&=&\frac{\left( \psi \left( x\right) -\psi \left( a\right) \right) ^{\gamma
-\alpha }}{\Gamma \left( \gamma -\alpha +1\right) }\mathcal{D}_{a+}^{\gamma
;\psi }\left( a\right)  \\
&&+\frac{1}{\Gamma \left( \gamma -\alpha +1\right) }\int_{a}^{x}\left( \psi
\left( x\right) -\psi \left( t\right) \right) ^{\gamma -\alpha }\frac{d}{dt}%
\mathcal{D}_{a+}^{\gamma ;\psi }f\left( t\right) dt.
\end{eqnarray*}

So we conclude the prove. Similarly, one obtains the other case.
\end{proof}

If $f,\psi \in C^{n+1}\left[ a,b\right] $, using the $f_{\psi +}^{\left[ n \right] }f\left( x\right) =\underset{\alpha \rightarrow n^{-}}{\lim } \mathcal{D}_{a+}^{\gamma ;\psi }f\left( x\right) $ and applying limit on both sides of {\rm Eq.(\ref{BA})}, we obtain 
\begin{equation*}
\underset{\alpha \rightarrow n^{-}}{\lim }\text{ }^{H}\mathbb{D}_{a+}^{\alpha ,\beta
;\psi }f\left( x\right) =f_{\psi +}^{\left[ n\right] }f\left( x\right) \text{
and }\underset{\alpha \rightarrow n^{-}}{\lim }\text{ }^{H}\mathbb{D}_{b-}^{\alpha
,\beta ;\psi }f\left( x\right) =f_{\psi -}^{\left[ n\right] }f\left( x\right).
\end{equation*}

\begin{theorem} The $\psi$-Hilfer fractional derivatives are bounded operators for all $n-1<\alpha<n$ and $0 \leq\beta \leq 1$, given by
\begin{equation}
\left\Vert ^{H}\mathbb{D}_{a+}^{\alpha ,\beta ;\psi }f\right\Vert _{C_{\gamma ;\psi
}}\leq K\left\Vert f_{\psi }^{\left[ n\right] }\right\Vert _{C_{\gamma ;\psi
}^{n}}
\end{equation}
and
\begin{equation}
\left\Vert ^{H}\mathbb{D}_{b-}^{\alpha ,\beta ;\psi }f\right\Vert _{C_{\gamma ;\psi
}}\leq K\left\Vert f_{\psi }^{\left[ n\right] }\right\Vert _{C_{\gamma ;\psi
}^{n}},
\end{equation}
$K=\dfrac{\left( \psi \left( b\right) -\psi \left( a\right) \right)
^{n-\alpha }}{\left( n-\gamma \right) \left( \gamma -\alpha \right) \Gamma
\left( n-\gamma \right) \Gamma \left( \gamma -\alpha \right) }$.
\end{theorem}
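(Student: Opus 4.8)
The plan is to exploit the factorisation of the operator into two $\psi$-fractional integrals of complementary orders and to estimate each factor by the elementary integral already recorded in Lemma~\ref{LE1}. Throughout write $\sigma_1:=\gamma-\alpha=\beta(n-\alpha)$ and $\sigma_2:=n-\gamma=(1-\beta)(n-\alpha)$, so that $\sigma_1,\sigma_2>0$ (for $0<\beta<1$) and $\sigma_1+\sigma_2=n-\alpha$. Starting from the representation \eqref{HIL2}, namely $^{H}\mathbb{D}_{a+}^{\alpha,\beta;\psi}f=I_{a+}^{\sigma_1;\psi}\mathcal{D}_{a+}^{\gamma;\psi}f$, I first rewrite the inner $\psi$-Riemann--Liouville derivative in $\psi$-Caputo form. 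Since $\mathcal{D}_{a+}^{\gamma;\psi}f=\bigl(\tfrac{1}{\psi'}\tfrac{d}{dx}\bigr)^{n}I_{a+}^{\sigma_2;\psi}f$, integrating by parts $n$ times (equivalently, invoking Theorem~\ref{TEO1} with order $\gamma$) transfers the $n$ differentiations onto $f$ and yields $\mathcal{D}_{a+}^{\gamma;\psi}f=I_{a+}^{\sigma_2;\psi}f_{\psi}^{[n]}$ up to the boundary terms $\sum_{k=0}^{n-1}\tfrac{f_{\psi}^{[k]}(a)}{\Gamma(k-\gamma+1)}(\psi(x)-\psi(a))^{k-\gamma}$. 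Hence the operator reduces to the nested integrals $I_{a+}^{\sigma_1;\psi}I_{a+}^{\sigma_2;\psi}f_{\psi}^{[n]}$.

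The analytic core is a single elementary estimate: for any $g\in C[a,b]$ and any $\sigma>0$,
\[
\bigl|I_{a+}^{\sigma;\psi}g(x)\bigr|\le\frac{\|g\|_{C[a,b]}}{\Gamma(\sigma)}\int_{a}^{x}\psi'(t)\bigl(\psi(x)-\psi(t)\bigr)^{\sigma-1}dt=\|g\|_{C[a,b]}\,I_{a+}^{\sigma;\psi}[1](x).
\]
Applying Lemma~\ref{LE1} with $\delta=1$ evaluates the right-hand side as $\|g\|_{C[a,b]}\,\dfrac{(\psi(x)-\psi(a))^{\sigma}}{\Gamma(\sigma+1)}$, and maximising over $x\in[a,b]$ gives the operator bound $\|I_{a+}^{\sigma;\psi}g\|_{C[a,b]}\le\dfrac{(\psi(b)-\psi(a))^{\sigma}}{\Gamma(\sigma+1)}\|g\|_{C[a,b]}$.

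I then apply this bound twice. First, with $g=f_{\psi}^{[n]}$ and $\sigma=\sigma_2$, it controls $\mathcal{D}_{a+}^{\gamma;\psi}f=I_{a+}^{\sigma_2;\psi}f_{\psi}^{[n]}$ by $\tfrac{(\psi(b)-\psi(a))^{\sigma_2}}{\Gamma(\sigma_2+1)}\|f_{\psi}^{[n]}\|_{C[a,b]}$; then, with $g=\mathcal{D}_{a+}^{\gamma;\psi}f$ and $\sigma=\sigma_1$, it controls $^{H}\mathbb{D}_{a+}^{\alpha,\beta;\psi}f=I_{a+}^{\sigma_1;\psi}\mathcal{D}_{a+}^{\gamma;\psi}f$. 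Chaining the two inequalities and using $\sigma_1+\sigma_2=n-\alpha$ together with $\Gamma(\sigma_1+1)=(\gamma-\alpha)\Gamma(\gamma-\alpha)$ and $\Gamma(\sigma_2+1)=(n-\gamma)\Gamma(n-\gamma)$ produces exactly the constant $K$. Finally, $\|f_{\psi}^{[n]}\|_{C[a,b]}\le\|f_{\psi}^{[n]}\|_{C_{\gamma;\psi}^{n}}$ (the $k=0$ summand of the $C_{\gamma;\psi}^{n}$-norm), and since the weight $(\psi(x)-\psi(a))^{\gamma}$ is bounded on $[a,b]$ the pointwise estimate passes to the left-hand norm; the right-sided case is identical, the factor $(-1)^{n}$ carrying no sign into the moduli.

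The step I expect to be the main obstacle is the passage from $\mathcal{D}_{a+}^{\gamma;\psi}f$ to $I_{a+}^{\sigma_2;\psi}f_{\psi}^{[n]}$: the boundary values $f_{\psi}^{[k]}(a)$, $0\le k\le n-1$, are not controlled by $\|f_{\psi}^{[n]}\|_{C_{\gamma;\psi}^{n}}$, so the clean constant $K$ is reached only once these are discarded (they vanish, for instance, when $f$ is flat to order $n-1$ at $a$, the natural setting of the Caputo reduction). A secondary point to watch is the weight bookkeeping: the stated $K$ corresponds to estimating $\bigl|{}^{H}\mathbb{D}_{a+}^{\alpha,\beta;\psi}f\bigr|$ and bounding the weight by its maximum, rather than keeping the extra factor $(\psi(x)-\psi(a))^{\gamma}$ inside the maximisation, and one should check that the boundary cases $\beta\in\{0,1\}$ (where $\sigma_1$ or $\sigma_2$ degenerates) are handled by the limiting identities $f_{\psi\pm}^{[n]}=\lim_{\alpha\to n^{-}}\mathcal{D}_{a\pm}^{\gamma;\psi}$.
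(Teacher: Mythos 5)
Your proposal follows essentially the same route as the paper's own proof: both start from the factorisation $^{H}\mathbb{D}_{a+}^{\alpha ,\beta ;\psi }=I_{a+}^{\gamma -\alpha ;\psi }\mathcal{D}_{a+}^{\gamma ;\psi }$ of Eq.~(\ref{HIL2}), bound the inner derivative of order $\gamma$ by transferring the $n$ differentiations onto $f$ so that it is controlled by $\frac{\left( \psi \left( b\right) -\psi \left( a\right) \right) ^{n-\gamma }}{\left( n-\gamma \right) \Gamma \left( n-\gamma \right) }\left\Vert f_{\psi }^{\left[ n\right] }\right\Vert$, then bound the outer integral operator by the factor $\frac{\left( \psi \left( b\right) -\psi \left( a\right) \right) ^{\gamma -\alpha }}{\left( \gamma -\alpha \right) \Gamma \left( \gamma -\alpha \right) }$ and multiply to assemble $K$. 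The two caveats you flag (the discarded boundary terms in the Caputo-type reduction, and the loose bookkeeping of the weight $\left( \psi \left( x\right) -\psi \left( a\right) \right) ^{\gamma }$) are glossed over in exactly the same way in the paper's proof, so your argument matches it in both substance and level of rigor.
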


\begin{proof} First, we note that
\begin{eqnarray}
\left\Vert \mathcal{D}_{a_{+}}^{\gamma ;\psi }f\right\Vert _{C_{\gamma ;\psi
}} &=&\left\Vert \left( \frac{1}{\psi ^{\prime }\left( x\right) }\frac{d}{dx}%
\right) ^{n}I_{a+}^{n-\gamma ;\psi }f\right\Vert _{C_{\gamma ;\psi }}  \notag
\label{H} \\
&=&\underset{x\in \left[ a,b\right] }{\max }\left\vert \left( \psi \left(
x\right) -\psi \left( a\right) \right) ^{\gamma }\left( \frac{1}{\psi
^{\prime }\left( x\right) }\frac{d}{dx}\right) ^{n}I_{a+}^{n-\gamma ;\psi
}f\right\vert   \notag \\
&=&\underset{x\in \left[ a,b\right] }{\max }\left\vert 
\begin{array}{c}
\left( \psi \left( x\right) -\psi \left( a\right) \right) ^{\gamma }\left( 
\frac{1}{\psi ^{\prime }\left( x\right) }\frac{d}{dx}\right) ^{n}\frac{1}{%
\Gamma \left( n-\gamma \right) } \\ 
\times \int_{a}^{x}\psi ^{\prime }\left( t\right) \left( \psi \left(
x\right) -\psi \left( t\right) \right) ^{n-\gamma -1}f\left( t\right) dt%
\end{array}%
\right\vert   \notag \\
&\leq &\frac{\left\Vert f_{\psi }^{n}\right\Vert _{C_{\gamma ;\psi }^{n}}}{%
\Gamma \left( n-\gamma \right) }\underset{x\in \left[ a,b\right] }{\max }%
\left\vert \int_{a}^{x}\psi ^{\prime }\left( t\right) \left( \psi \left(
x\right) -\psi \left( t\right) \right) ^{n-\gamma -1}dt\right\vert   \notag
\\
&\leq &\frac{\left( \psi \left( b\right) -\psi \left( a\right) \right)
^{n-\gamma }\left\Vert f_{\psi }^{n}\right\Vert _{C_{\gamma ;\psi }^{n}}}{%
\left( n-\gamma \right) \Gamma \left( n-\gamma \right) },
\end{eqnarray}
with $\gamma=\alpha+\beta(n-\alpha)$.

Using the definition of $\psi$-Hilfer fractional derivative and {\rm Eq.(\ref{H})}, we obtain
\begin{eqnarray*}
\left\Vert ^{H}\mathbb{D}_{a+}^{\alpha ,\beta ;\psi }f\right\Vert
_{C_{\gamma ;\psi }} &=&\left\Vert I_{a+}^{\gamma -\alpha ;\psi }\mathcal{D}%
_{a_{+}}^{\gamma ;\psi }f\right\Vert _{C_{\gamma ;\psi }}  \notag \\
&=&\underset{x\in \left[ a,b\right] }{\max }\left\vert \left( \psi \left(
x\right) -\psi \left( a\right) \right) ^{\gamma }I_{a+}^{\gamma -\alpha
;\psi }\mathcal{D}_{a_{+}}^{\gamma ;\psi }f\left( x\right) \right\vert  
\notag \\
&\leq &\frac{\left\Vert \mathcal{D}_{a_{+}}^{\gamma ;\psi }f\right\Vert
_{C_{\gamma ;\psi }}}{\Gamma \left( \gamma -\alpha \right) }\underset{x\in %
\left[ a,b\right] }{\max }\left\vert \int_{a}^{x}\psi ^{\prime }\left(
t\right) \left( \psi \left( x\right) -\psi \left( t\right) \right) ^{\gamma
-\alpha -1}dt\right\vert   \notag \\
&\leq &\frac{\left( \psi \left( b\right) -\psi \left( a\right) \right)
^{\gamma -\alpha }}{\left( \gamma -\alpha \right) \Gamma \left( \gamma
-\alpha \right) }\left\Vert D_{a_{+}}^{\gamma ;\psi }f\right\Vert
_{C_{\gamma ;\psi }}  \notag \\
&\leq &\frac{\left( \psi \left( b\right) -\psi \left( a\right) \right)
^{n-\alpha }}{\left( n-\gamma \right) \left( \gamma -\alpha \right) \Gamma
\left( n-\gamma \right) \Gamma \left( \gamma -\alpha \right) }\left\Vert
f_{\psi }^{n}\right\Vert _{C_{\gamma ;\psi }^{n}}  \notag \\
&=&K\left\Vert f_{\psi }^{n}\right\Vert _{C_{\gamma ;\psi }^{n}},
\end{eqnarray*}
with $K=\dfrac{\left( \psi \left( b\right) -\psi \left( a\right) \right) ^{n-\alpha }}{\left( n-\gamma \right) \left( \gamma -\alpha \right) \Gamma \left( n-\gamma \right) \Gamma \left( \gamma -\alpha \right)}$.
\end{proof}

\begin{remark}\label{H1} Consider the $\psi$-Hilfer fractional derivative and the following function
$g\left( x\right) =I_{a+}^{\left( 1-\beta \right) \left( n-\alpha \right);\psi }f\left( x\right) $, so we have
\begin{equation*}
^{H}\mathbb{D}_{a+}^{\alpha ,\beta ;\psi }f\left( x\right) =I_{a+}^{n-\mu ;\psi
}\left( \frac{1}{\psi ^{\prime }\left( x\right) }\frac{d}{dx}\right) ^{n}g\left( x\right) ,
\end{equation*}
with $\mu = n(1-\beta)+\beta\alpha$.

Thus, we have the following relationship between $\psi$-Hilfer and $\psi$-Caputo fractional derivative, given by
\begin{eqnarray*}
^{H}\mathbb{D}_{a+}^{\alpha ,\beta ;\psi }f\left( x\right)  &=&^{C}D_{a+}^{\mu ;\psi
}g\left( x\right)   \notag \\
&=&^{C}D_{a+}^{\mu ;\psi }\left[ I_{a+}^{\left( 1-\beta \right) \left(
n-\alpha \right) ;\psi }f\left( x\right) \right] .
\end{eqnarray*}
\end{remark}


\begin{theorem} Let $n-1<\alpha<n$, $n\in\mathbb{N}$ and $0\leq \beta \leq 1$. If $f\in C^{n}[a,b] $, then
\begin{equation*}
^{H}\mathbb{D}_{a+}^{\alpha ,\beta ;\psi }f\left( x\right) =\mathcal{D}%
_{a+}^{n-\beta \left( n-\alpha \right) ;\psi }\left[ I_{a+}^{\left( 1-\beta
\right) \left( n-\alpha \right) ;\psi }f\left( x\right) -\underset{k=0}{%
\overset{n-1}{\sum }}\frac{\left( \psi \left( x\right) -\psi \left( a\right)
\right) ^{k}\text{ }\mathcal{D}_{a+,k}^{\gamma ;\psi }f\left( a\right) }{k!}%
\right] 
\end{equation*}%
and 
\begin{equation*}
^{H}\mathbb{D}_{b-}^{\alpha ,\beta ;\psi }f\left( x\right) =\mathcal{D}%
_{b-}^{n-\beta \left( n-\alpha \right) ;\psi }\left[ I_{b-}^{\left( 1-\beta
\right) \left( n-\alpha \right) ;\psi }f\left( x\right) -\underset{k=0}{%
\overset{n-1}{\sum }}\frac{\left( -1\right) ^{k}\left( \psi \left( b\right)
-\psi \left( x\right) \right) ^{k}\text{ }\mathcal{D}_{b-,k}^{\gamma ;\psi
}f\left( b\right) }{k!}\right] ,
\end{equation*}
$\gamma=\alpha+\beta(k-\alpha)$.
\end{theorem}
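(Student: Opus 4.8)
The plan is to derive the identity by composing two results already in hand: the bridge between the $\psi$-Caputo and $\psi$-Riemann--Liouville derivatives (Theorem~\ref{TEO1}) and the reduction of the $\psi$-Hilfer derivative to a $\psi$-Caputo derivative (Remark~\ref{H1}). I will describe the argument for the left-sided operator; the right-sided formula follows by the same reasoning applied to Eq.~(\ref{HIL3}) together with the right-sided statements of Remark~\ref{H1} and Theorem~\ref{TEO1}, the sign factors $(-1)^{k}$ arising from the right-sided operator $-\tfrac{1}{\psi'}\tfrac{d}{dx}$.

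First I would introduce the auxiliary function $g(x):=I_{a+}^{(1-\beta)(n-\alpha);\psi}f(x)$ and the order $\mu:=n(1-\beta)+\beta\alpha=n-\beta(n-\alpha)$, exactly as in Remark~\ref{H1}, so that
\[
{}^{H}\mathbb{D}_{a+}^{\alpha,\beta;\psi}f(x)={}^{C}D_{a+}^{\mu;\psi}g(x).
\]
Because $n-1<\alpha<n$ and $0\le\beta\le1$ give $0<n-\alpha<1$ and hence $n-1<\mu\le n$, the integer attached to the order $\mu$ is exactly $n$; this is what makes the Taylor-type sum produced below run from $k=0$ to $n-1$.

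Next I would apply Theorem~\ref{TEO1} to $g$ with order $\mu$, which yields
\[
{}^{C}D_{a+}^{\mu;\psi}g(x)=\mathcal{D}_{a+}^{\mu;\psi}\!\left[g(x)-\sum_{k=0}^{n-1}\frac{(\psi(x)-\psi(a))^{k}}{k!}\,g_{\psi}^{[k]}(a)\right].
\]
To finish, I would identify the coefficients: by definition $g_{\psi}^{[k]}(x)=\left(\tfrac{1}{\psi'(x)}\tfrac{d}{dx}\right)^{k}I_{a+}^{(1-\beta)(n-\alpha);\psi}f(x)$, which is precisely the operator written $\mathcal{D}_{a+,k}^{\gamma;\psi}f(x)$ in the statement. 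Substituting $\mu=n-\beta(n-\alpha)$ and $g=I_{a+}^{(1-\beta)(n-\alpha);\psi}f$ back into the bracket then reproduces the asserted formula; note in particular that the $k=0$ term $g(a)=I_{a+}^{(1-\beta)(n-\alpha);\psi}f(a)$ vanishes whenever $\beta<1$.

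The delicate point, and the one I expect to cost the real work, is the regularity needed to invoke Theorem~\ref{TEO1} for $g$ rather than for $f$ itself. When $\beta=1$ the integral is the identity and $g=f\in C^{n}$, so nothing is needed; but for $0\le\beta<1$ the exponent $(1-\beta)(n-\alpha)$ is a positive number strictly below $1$, and $g=I_{a+}^{(1-\beta)(n-\alpha);\psi}f$ generally fails to lie in $C^{n}[a,b]$ up to the endpoint $x=a$, since its top $\psi$-derivative $g_{\psi}^{[n]}$ typically carries an integrable singularity there. The natural setting is therefore the weighted space $C_{\gamma;\psi}^{n}[a,b]$ of Section~2: one must check that $g_{\psi}^{[k]}(a)$ is finite for $0\le k\le n-1$, so that the boundary terms are well defined, while $g_{\psi}^{[n]}$ is only weighted-continuous, and verify that Theorem~\ref{TEO1} persists in this weighted sense. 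Once this regularity bookkeeping is carried out, the identity is a direct composition of Remark~\ref{H1} and Theorem~\ref{TEO1}.
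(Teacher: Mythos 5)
Your proposal is correct and follows essentially the same route as the paper's own proof: introduce $g(x)=I_{a+}^{(1-\beta)(n-\alpha);\psi}f(x)$ and $\mu=n-\beta(n-\alpha)$, reduce the $\psi$-Hilfer derivative to ${}^{C}D_{a+}^{\mu;\psi}g(x)$ via Remark~\ref{H1}, expand by Theorem~\ref{TEO1}, and identify $g_{\psi}^{[k]}(a)$ with the coefficients $\mathcal{D}_{a+,k}^{\gamma;\psi}f(a)$. Your closing observation about the regularity needed to apply Theorem~\ref{TEO1} to $g$ (which need not lie in $C^{n}[a,b]$ when $\beta<1$) is a legitimate gap that the paper's proof silently passes over, so flagging it strengthens rather than departs from the argument.
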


\begin{proof} In fact, consider the function $g\left( x\right) =I_{a+}^{\left( 1-\beta \right) \left( n-\alpha \right);\psi }f\left( x\right) $ and $\delta =n-\beta \left( n-\alpha \right) $, using {\rm Remark \ref{H1}} and {\rm Theorem \ref{TEO1}}, we have
\begin{eqnarray*}
^{H}\mathbb{D}_{a+}^{\alpha ,\beta ;\psi }f\left( x\right) 
&=&^{C}D_{a+}^{\delta ;\psi }g\left( x\right)  \\
&=&\mathcal{D}_{a+}^{\delta ;\psi }\left[ g\left( x\right) -\underset{k=0}{%
\overset{n-1}{\sum }}\frac{\left( \psi \left( x\right) -\psi \left( a\right)
\right) ^{k}}{k!}g_{\psi }^{k}\left( a\right) \right]  \\
&=&\mathcal{D}_{a+}^{\delta ;\psi }\left[ I_{a+}^{\left( 1-\beta \right)
\left( n-\alpha \right) ;\psi }f\left( x\right) -\right.  \\
&&\left. -\underset{k=0}{\overset{n-1}{\sum }}\frac{\left( \psi \left(
x\right) -\psi \left( a\right) \right) ^{k}}{k!}\left( \frac{1}{\psi
^{\prime }\left( x\right) }\frac{d}{dx}\right) ^{k}I_{a+}^{\left( 1-\beta
\right) \left( k-\alpha \right) ;\psi }f\left( a\right) \right]  \\
&=&\mathcal{D}_{a+}^{\delta ;\psi }\left[ I_{a+}^{\left( 1-\beta \right)
\left( n-\alpha \right) ;\psi }f\left( x\right) -\underset{k=0}{\overset{n-1}%
{\sum }}\frac{\left( \psi \left( x\right) -\psi \left( a\right) \right) ^{k}%
}{k!}\mathcal{D}_{a+,k}^{\gamma ;\psi }f\left( a\right) \right] .
\end{eqnarray*}
\end{proof}

\begin{theorem}\label{teo5} If $f\in C^{n}[a,b]$, $n-1<\alpha<n$ and $0\leq \beta \leq 1$, then
\begin{equation*}
I_{a+}^{\alpha ;\psi }\text{ }^{H}\mathbb{D}_{a+}^{\alpha ,\beta ;\psi }f\left( x\right) =f\left( x\right) -\overset{n}{\underset{k=1}{\sum }}\frac{\left( \psi \left( x\right) -\psi \left( a\right) \right) ^{\gamma -k}}{\Gamma \left( \gamma -k+1\right) }f_{\psi }^{\left[ n-k\right] }I_{a+}^{\left( 1-\beta \right) \left( n-\alpha \right) ;\psi }f\left( a\right) 
\end{equation*}
and 
\begin{equation*}
I_{b-}^{\alpha ;\psi }\text{ }^{H}\mathbb{D}_{b-}^{\alpha ,\beta ;\psi }f\left( x\right) =f\left( x\right) -\overset{n}{\underset{k=1}{\sum }}\frac{\left( -1\right) ^{k}\left( \psi \left( b\right) -\psi \left( x\right) \right) ^{\gamma -k}}{\Gamma \left( \gamma -k+1\right) }f_{\psi }^{\left[ n-k\right] }I_{b-}^{\left( 1-\beta \right) \left( n-\alpha \right) ;\psi }f\left(
b\right) .
\end{equation*}
\end{theorem}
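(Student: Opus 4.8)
The plan is to first collapse the left-hand composition onto a single $\psi$-Riemann--Liouville operator and then strip off the $n$ derivatives one at a time. Using the representation \eqref{HIL2} and the semigroup property of Lemma \ref{LE} (applicable since $\gamma-\alpha=\beta(n-\alpha)\ge 0$), I would begin with
\[
I_{a+}^{\alpha;\psi}\,{}^{H}\mathbb{D}_{a+}^{\alpha,\beta;\psi}f(x)=I_{a+}^{\alpha;\psi}I_{a+}^{\gamma-\alpha;\psi}\mathcal{D}_{a+}^{\gamma;\psi}f(x)=I_{a+}^{\gamma;\psi}\mathcal{D}_{a+}^{\gamma;\psi}f(x).
\]
Writing $g:=I_{a+}^{n-\gamma;\psi}f=I_{a+}^{(1-\beta)(n-\alpha);\psi}f$ (here $(1-\beta)(n-\alpha)=n-\gamma$) and recalling $\mathcal{D}_{a+}^{\gamma;\psi}f=\big(\tfrac{1}{\psi'}\tfrac{d}{dx}\big)^{n}g$, the statement is reduced to the $\psi$-fundamental theorem
\[
I_{a+}^{\gamma;\psi}\big(\tfrac{1}{\psi'}\tfrac{d}{dx}\big)^{n}g(x)=f(x)-\sum_{k=1}^{n}\frac{(\psi(x)-\psi(a))^{\gamma-k}}{\Gamma(\gamma-k+1)}\,g_{\psi}^{[n-k]}(a),
\]
since $g_{\psi}^{[n-k]}(a)=f_{\psi}^{[n-k]}I_{a+}^{(1-\beta)(n-\alpha);\psi}f(a)$ is exactly the stated boundary coefficient.

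The mechanism for the reduced identity is a single integration-by-parts step. Cancelling $\psi'$ inside the integral and integrating once gives, for a sufficiently regular $h$,
\[
I_{a+}^{r;\psi}\big(\tfrac{1}{\psi'}\tfrac{d}{dx}\big)h(x)=I_{a+}^{r-1;\psi}h(x)-\frac{(\psi(x)-\psi(a))^{r-1}}{\Gamma(r)}h(a).
\]
I would apply this successively with $r=\gamma,\gamma-1,\dots,\gamma-(n-2)$ to the functions $h=g_{\psi}^{[n-1]},g_{\psi}^{[n-2]},\dots,g_{\psi}^{[1]}$; because $\gamma>n-1$ every one of these orders exceeds $1$, so the $t=x$ endpoint of each integration by parts vanishes. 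This produces the boundary terms for $k=1,\dots,n-1$ and leaves the single remaining piece $I_{a+}^{s;\psi}\big(\tfrac{1}{\psi'}\tfrac{d}{dx}\big)g$ with $s:=\gamma-(n-1)\in(0,1]$.

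The main obstacle is this last piece, where the order $s$ has dropped to $(0,1]$ and the naive boundary factor $(\psi(x)-\psi(t))^{s-1}$ is singular at $t=x$, so bare integration by parts is no longer licit. I would instead observe that $\big(\tfrac{1}{\psi'}\tfrac{d}{dx}\big)g=\big(\tfrac{1}{\psi'}\tfrac{d}{dx}\big)I_{a+}^{1-s;\psi}f=\mathcal{D}_{a+}^{s;\psi}f$ and invoke the single-order $\psi$-fundamental theorem
\[
I_{a+}^{s;\psi}\mathcal{D}_{a+}^{s;\psi}f(x)=f(x)-\frac{(\psi(x)-\psi(a))^{s-1}}{\Gamma(s)}\,I_{a+}^{1-s;\psi}f(a),
\]
the $\psi$-analogue of the classical first-order base case, which follows from Fubini's theorem and the Beta-integral evaluation underlying Lemma \ref{LE1}. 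This supplies simultaneously the leading term $f(x)$ and the final ($k=n$) summand, assembling the full sum. The genuinely delicate point throughout is that every coefficient $g_{\psi}^{[n-k]}(a)$ must be read as the limit $\lim_{t\to a^{+}}g_{\psi}^{[n-k]}(t)$, and its finiteness — equivalently, that $\mathcal{D}_{a+}^{\gamma-k;\psi}f$ admits a finite boundary value at $a$ — is what the hypotheses on $f$ and the regularity of $\psi$ must guarantee; this is exactly where care is required, and the whole argument should be understood with the boundary terms interpreted as such one-sided limits.

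Finally, the right-sided identity follows verbatim with $-\tfrac{1}{\psi'}\tfrac{d}{dx}$ replacing $\tfrac{1}{\psi'}\tfrac{d}{dx}$ and $I_{b-}^{\cdot;\psi}$ replacing $I_{a+}^{\cdot;\psi}$: each application of the peeling step then carries one extra minus sign, and these accumulate into the factor $(-1)^{k}$ attached to the $k$-th boundary term.
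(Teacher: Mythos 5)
Your proposal is correct, and its skeleton coincides with the paper's: both first use Lemma \ref{LE} and the representation \eqref{HIL2} to collapse the left side to $I_{a+}^{\gamma;\psi}\mathcal{D}_{a+}^{\gamma;\psi}f(x)$, and both then generate the $n$ boundary terms by repeated integration by parts. The genuine difference is in the endgame, and yours is the more careful one. The paper integrates by parts all $n$ times, arriving at the expression $\frac{1}{\Gamma(\gamma-n)}\int_{a}^{x}(\psi(x)-\psi(t))^{\gamma-n-1}I_{a+}^{(1-\beta)(n-\alpha);\psi}f(t)\,dt$, which it labels $I_{a+}^{\gamma-n;\psi}I_{a+}^{(1-\beta)(n-\alpha);\psi}f(x)$ and then cancels by invoking the semigroup law with exponents $A=\gamma-n\le 0$ and $B=(1-\beta)(n-\alpha)$, $A+B=0$. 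This is purely formal: for $\beta<1$ one has $\gamma-n\in(-1,0]$, so the kernel exponent $\gamma-n-1$ is non-integrable at $t=x$, $\Gamma(\gamma-n)$ is evaluated at a non-positive argument, and Lemma \ref{LE} is being applied outside its hypotheses ($\alpha,\beta>0$); moreover the $n$-th integration by parts itself produces a boundary factor $(\psi(x)-\psi(t))^{\gamma-n}$ that blows up at $t=x$. You stop the peeling after $n-1$ steps, exactly where every order still exceeds $1$ so each endpoint term at $t=x$ legitimately vanishes, and you dispose of the residual piece $I_{a+}^{s;\psi}\mathcal{D}_{a+}^{s;\psi}f$ with $s=\gamma-n+1\in(0,1]$ by the first-order fundamental identity $I_{a+}^{s;\psi}\mathcal{D}_{a+}^{s;\psi}f(x)=f(x)-\frac{(\psi(x)-\psi(a))^{s-1}}{\Gamma(s)}I_{a+}^{1-s;\psi}f(a)$, provable by Fubini and the Beta integral (the $\psi$-analogue of the classical Riemann--Liouville composition lemma), which simultaneously yields the term $f(x)$ and the $k=n$ summand. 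So what the paper buys with its formal $A+B=0$ cancellation — a uniform, index-free treatment of all $n$ steps — you buy instead with one auxiliary lemma, at the price of a case split but with the gain that every manipulation is licit; your reading of the coefficients as one-sided limits at $a$ is also the correct interpretation, which the paper leaves implicit.
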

\begin{proof} Using {\rm Lemma \ref{LE}} and definition of $\psi$-Hilfer fractional derivative, we get
\begin{eqnarray}\label{A1}
I_{a+}^{\alpha ;\psi }\text{ }^{H}\mathbb{D}_{a+}^{\alpha ,\beta ;\psi }f\left( x\right) 
&=&I_{a+}^{\alpha ;\psi }\left( I_{a+}^{\gamma -\alpha ;\psi }\mathcal{D}_{a+}^{\gamma
;\psi }f\left( x\right) \right)   \notag \\
&=&I_{a+}^{\gamma ;\psi }\mathcal{D}_{a+}^{\gamma ;\psi }f\left( x\right) .
\end{eqnarray}

Integrating by parts, we have
\begin{eqnarray*}
I_{a+}^{\gamma ;\psi }\mathcal{D}_{a+}^{\gamma ;\psi }f\left( x\right)  &=&%
\frac{1}{\Gamma \left( \gamma \right) }\int_{a}^{x}\psi ^{\prime }\left(
t\right) \left( \psi \left( x\right) -\psi \left( t\right) \right) ^{\gamma
-1}\mathcal{D}_{a+}^{\gamma ;\psi }f\left( t\right) dt  \notag \\
&=&\frac{1}{\Gamma \left( \gamma \right) }\left( \int_{a}^{x}\left( \psi
\left( x\right) -\psi \left( t\right) \right) ^{\gamma -1}\frac{d}{dt}%
f_{\psi }^{\left[ n-1\right] }I_{a+}^{\left( 1-\beta \right) \left( n-\alpha
\right) ;\psi }f\left( t\right) dt\right)   \notag \\
&=&\frac{1}{\Gamma \left( \gamma -1\right) }\int_{a}^{x}\left( \psi \left(
x\right) -\psi \left( t\right) \right) ^{\gamma -2}\frac{d}{dt}f_{\psi }^{%
\left[ n-2\right] }I_{a+}^{\left( 1-\beta \right) \left( n-\alpha \right)
;\psi }f\left( t\right) dt-  \notag \\
&&-\frac{1}{\Gamma \left( \gamma \right) }\left( \psi \left( x\right) -\psi
\left( a\right) \right) ^{\gamma -1}f_{\psi }^{\left[ n-1\right]
}I_{a+}^{\left( 1-\beta \right) \left( n-\alpha \right) ;\psi }f\left(
a\right)   \notag \\
&=&\frac{1}{\Gamma \left( \gamma -2\right) }\int_{a}^{x}\left( \psi \left(
x\right) -\psi \left( t\right) \right) ^{\gamma -3}\frac{d}{dt}f_{\psi }^{%
\left[ n-3\right] }I_{a+}^{\left( 1-\beta \right) \left( n-\alpha \right)
;\psi }f\left( t\right) dt-  \notag \\
&&-\frac{1}{\Gamma \left( \gamma \right) }\left( \psi \left( x\right) -\psi
\left( a\right) \right) ^{\gamma -1}f_{\psi }^{\left[ n-1\right]
}I_{a+}^{\left( 1-\beta \right) \left( n-\alpha \right) ;\psi }f\left(
a\right) -  \notag \\
&&-\frac{1}{\Gamma \left( \gamma -1\right) }\left( \psi \left( x\right)
-\psi \left( a\right) \right) ^{\gamma -2}f_{\psi }^{\left[ n-2\right]
}I_{a+}^{\left( 1-\beta \right) \left( n-\alpha \right) ;\psi }f\left(
a\right)   \notag \\
&&\cdot   \notag \\
&&\cdot   \notag \\
&&\cdot 
\end{eqnarray*}

\begin{eqnarray*}
&=&\frac{1}{\Gamma \left( \gamma -n\right) }\int_{a}^{x}\left( \psi \left(
x\right) -\psi \left( t\right) \right) ^{\gamma -n-1}I_{a+}^{\left( 1-\beta
\right) \left( n-\alpha \right) ;\psi }f\left( t\right) dt  \notag \\
&&-\underset{k=1}{\overset{n}{\sum }}\frac{\left( \psi \left( x\right) -\psi
\left( a\right) \right) ^{\gamma -k}}{\Gamma \left( \gamma -k+1\right) }%
f_{\psi }^{\left[ n-k\right] }I_{a+}^{\left( 1-\beta \right) \left( n-\alpha
\right) ;\psi }f\left( a\right)   \notag \\
&=&I_{a+}^{\gamma -n;\psi }I_{a+}^{\left( 1-\beta \right) \left( n-\alpha
\right) ;\psi }f\left( x\right)   \notag \\
&&-\underset{k=1}{\overset{n}{\sum }}\frac{\left( \psi \left( x\right) -\psi
\left( a\right) \right) ^{\gamma -k}}{\Gamma \left( \gamma -k+1\right) }%
f_{\psi }^{\left[ n-k\right] }I_{a+}^{\left( 1-\beta \right) \left( n-\alpha
\right) ;\psi }f\left( a\right) 
\end{eqnarray*}
Introducing the parameters $A=\gamma -n=\alpha +\beta \left( n-\alpha \right) -n=\alpha +\beta n-\beta \alpha -n$ and $B=\left( 1-\beta \right) \left( n-\alpha \right) =n-\alpha -\beta n+\beta
\alpha $, we have $A+B=0$. So, using {\rm Lemma \ref{LE}} and by {\rm Eq.(\ref{A1})}, we conclude that
\begin{eqnarray*}
I_{a+}^{\alpha ;\psi }\text{ }^{H}\mathbb{D}_{a+}^{\alpha ,\beta ;\psi
}f\left( x\right)  &=&I_{a+}^{\gamma -n;\psi }I_{a+}^{\left( 1-\beta \right)
\left( n-\alpha \right) ;\psi }f\left( x\right)  \\
&&-\overset{n}{\underset{k=1}{\sum }}\frac{\left( \psi \left( x\right) -\psi
\left( a\right) \right) ^{\gamma -k}}{\Gamma \left( \gamma +1-k\right) }%
f_{\psi }^{\left[ n-k\right] }I_{a+}^{\left( 1-\beta \right) \left( n-\alpha
\right) ;\psi }f\left( a\right)  \\
&=&f\left( x\right) -\overset{n}{\underset{k=1}{\sum }}\frac{\left( \psi
\left( x\right) -\psi \left( a\right) \right) ^{\gamma -k}}{\Gamma \left(
\gamma +1-k\right) }f_{\psi }^{\left[ n-k\right] }I_{a+}^{\left( 1-\beta
\right) \left( n-\alpha \right) ;\psi }f\left( a\right) .
\end{eqnarray*}
\end{proof}

\begin{theorem} Let $f,g\in C^{n}[a,b]$, $\alpha>0$ and $0\leq\beta\leq 1$. Then
\begin{equation}
\text{ }^{H}\mathbb{D}_{a+}^{\alpha ,\beta ;\psi }f\left( x\right) =\text{ }^{H}\mathbb{D}_{a+}^{\alpha ,\beta ;\psi
}g\left( x\right) \Leftrightarrow f\left( x\right) =g\left( x\right) +%
\underset{k=1}{\overset{n}{\sum }}c_{k}\left( \psi \left( x\right) -\psi
\left( a\right) \right) ^{\gamma -k}
\end{equation}
and
\begin{equation}
\text{ }^{H}\mathbb{D}_{b-}^{\alpha ,\beta ;\psi }f\left( x\right) =\text{ }^{H}\mathbb{D}_{b-}^{\alpha ,\beta ;\psi
}g\left( x\right) \Leftrightarrow f\left( x\right) =g\left( x\right) +%
\underset{k=1}{\overset{n}{\sum }}d_{k}\left( \psi \left( b\right) -\psi
\left( x\right) \right) ^{\gamma -k}.
\end{equation}
\end{theorem}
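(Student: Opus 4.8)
\section*{Proof proposal}

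The plan is to use the linearity of the $\psi$-Hilfer operator to reduce the stated equivalence to a description of its kernel, and then to identify that kernel using Lemma~\ref{LE2} and Theorem~\ref{teo5}. Since $^{H}\mathbb{D}_{a+}^{\alpha ,\beta ;\psi }$ is linear, the identity $^{H}\mathbb{D}_{a+}^{\alpha ,\beta ;\psi }f={}^{H}\mathbb{D}_{a+}^{\alpha ,\beta ;\psi }g$ holds if and only if $h:=f-g$ satisfies $^{H}\mathbb{D}_{a+}^{\alpha ,\beta ;\psi }h=0$. Likewise, the right-hand side is equivalent to $h(x)=\sum_{k=1}^{n}c_{k}\left(\psi(x)-\psi(a)\right)^{\gamma-k}$. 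Thus the whole statement amounts to proving that $h$ lies in the kernel of the operator precisely when $h$ has this form, and I would organize the argument as the two implications of this reduced claim.

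For the implication $(\Leftarrow)$, it suffices by linearity to check that each function $\left(\psi(x)-\psi(a)\right)^{\gamma-k}$, for $k=1,\dots,n$, is annihilated by $^{H}\mathbb{D}_{a+}^{\alpha ,\beta ;\psi }$. Writing the operator in the factorized form of Eq.~(\ref{HIL2}) as $I_{a+}^{\gamma-\alpha ;\psi }\mathcal{D}_{a+}^{\gamma ;\psi }$, I would first apply Lemma~\ref{LE2} with $\delta=\gamma-k+1$, which gives $\mathcal{D}_{a+}^{\gamma ;\psi }\left(\psi(x)-\psi(a)\right)^{\gamma-k}=\dfrac{\Gamma(\gamma-k+1)}{\Gamma(1-k)}\left(\psi(x)-\psi(a)\right)^{-k}$. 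Since $k$ is a positive integer, $1-k$ is a non-positive integer, where $\Gamma$ has a pole; hence the coefficient $1/\Gamma(1-k)$ vanishes and the inner $\psi$-Riemann--Liouville derivative is already zero. Applying $I_{a+}^{\gamma-\alpha ;\psi }$ to zero then yields $^{H}\mathbb{D}_{a+}^{\alpha ,\beta ;\psi }\left(\psi(x)-\psi(a)\right)^{\gamma-k}=0$, completing this direction.

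For the implication $(\Rightarrow)$, suppose $^{H}\mathbb{D}_{a+}^{\alpha ,\beta ;\psi }h=0$. I would apply the fractional integral $I_{a+}^{\alpha ;\psi }$ to both sides and invoke the inversion formula of Theorem~\ref{teo5}, namely $I_{a+}^{\alpha ;\psi }\,{}^{H}\mathbb{D}_{a+}^{\alpha ,\beta ;\psi }h(x)=h(x)-\sum_{k=1}^{n}\dfrac{\left(\psi(x)-\psi(a)\right)^{\gamma-k}}{\Gamma(\gamma-k+1)}\,f_{\psi}^{[n-k]}I_{a+}^{(1-\beta)(n-\alpha);\psi}h(a)$. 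Because the left-hand side equals $I_{a+}^{\alpha ;\psi }(0)=0$, solving for $h$ gives exactly $h(x)=\sum_{k=1}^{n}c_{k}\left(\psi(x)-\psi(a)\right)^{\gamma-k}$ with $c_{k}:=\dfrac{1}{\Gamma(\gamma-k+1)}f_{\psi}^{[n-k]}I_{a+}^{(1-\beta)(n-\alpha);\psi}h(a)$, constants determined by $h=f-g$. Translating back to $f$ and $g$ produces the claimed representation, and the right-sided equivalence follows verbatim from the right-sided parts of Lemma~\ref{LE2} and Theorem~\ref{teo5}, carrying along the factors $(-1)^{k}$ and the arguments $\left(\psi(b)-\psi(x)\right)^{\gamma-k}$.

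The step I would watch most carefully is the $(\Leftarrow)$ direction: it rests entirely on the vanishing of $1/\Gamma(1-k)$ at the non-positive integers $1-k$, which is what forces each candidate function into the kernel. This is also robust in that the coefficient $\Gamma(\gamma-k+1)/\Gamma(1-k)$ vanishes irrespective of any bookkeeping in the surviving power, so the conclusion is insensitive to the precise exponent returned by Lemma~\ref{LE2}. I would also verify that the constants $c_{k}$ extracted from Theorem~\ref{teo5} are consistent with $h\in C^{n}[a,b]$, a routine matter since Theorem~\ref{teo5} is proved under exactly this hypothesis.
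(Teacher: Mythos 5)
Your proposal is correct, and its skeleton coincides with the paper's: your $(\Rightarrow)$ direction is exactly the paper's argument (apply $I_{a+}^{\alpha;\psi}$ to $^{H}\mathbb{D}_{a+}^{\alpha,\beta;\psi}(f-g)=0$, invoke Theorem~\ref{teo5}, and read off the constants $c_{k}$). The genuine difference is in the $(\Leftarrow)$ direction, and there your route is the better one. The paper applies the operator to both sides and then cites Eq.~(\ref{RE1}), i.e.\ the fact that $^{H}\mathbb{D}_{a+}^{\alpha,\beta;\psi}\left(\psi(x)-\psi(a)\right)^{k}=0$ for integers $k=0,1,\dots,n-1$; but the functions actually appearing in the sum are $\left(\psi(x)-\psi(a)\right)^{\gamma-k}$, whose exponents are in general not integers, so the cited equation does not literally apply. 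Your argument --- factor the operator as $I_{a+}^{\gamma-\alpha;\psi}\mathcal{D}_{a+}^{\gamma;\psi}$ via Eq.~(\ref{HIL2}), apply Lemma~\ref{LE2} with $\delta=\gamma-k+1$, and observe that the coefficient $\Gamma(\gamma-k+1)/\Gamma(1-k)$ vanishes because $1-k$ is a non-positive integer --- is the justification the statement actually needs, and, as you note, it is insensitive to the (typographical) exponent printed in Lemma~\ref{LE2}. One point worth making explicit when you write this up: Lemma~\ref{LE2} requires $\delta>0$, which holds here since $\gamma=\alpha+\beta(n-\alpha)\geq\alpha>n-1$ gives $\delta=\gamma-k+1>n-k\geq 0$ for all $1\leq k\leq n$. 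In short, the two proofs are structurally identical, but your treatment of the kernel functions repairs the one step where the paper's citation does not match what is being differentiated.
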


\begin{proof}
Suppose that $\text{ }^{H}\mathbb{D}_{a+}^{\alpha ,\beta ;\psi }f\left( x\right) =\text{ }^{H}\mathbb{D}_{a+}^{\alpha ,\beta ;\psi }g\left( x\right) $, that is $ \text{ }^{H}\mathbb{D}_{a+}^{\alpha ,\beta ;\psi }\left( f\left( x\right)-g\left( x\right) \right) =0 $.

Applying the left integral operator on both sides of this equality and using {\rm Theorem \ref{teo5}}, we get
\begin{equation*}
I_{a+}^{\alpha ;\psi }\text{ }^{H}\mathbb{D}_{a+}^{\alpha ,\beta ;\psi }\left( f\left( x\right)-g\left( x\right) \right) =0
\end{equation*}
that imply
\begin{equation*}
f\left( x\right) -g\left( x\right) -\overset{n}{\underset{k=1}{\sum }}\frac{ \left( \psi \left( x\right) -\psi \left( a\right) \right) ^{\gamma -k}}{ \Gamma \left( \gamma +1-k\right) }\left( f-g\right) _{\psi }^{\left[ n-k \right] }I_{a+}^{\left( 1-\beta \right) \left( n-\alpha \right) ;\psi }\left( f-g\right) \left( a\right) =0.
\end{equation*}

Then, we conclude that,
\begin{equation*}
f\left( x\right) =g\left( x\right) +\overset{n}{\underset{k=1}{\sum }}%
c_{k}\left( \psi \left( x\right) -\psi \left( a\right) \right) ^{\gamma -k},
\end{equation*}
where $c_{k}=\dfrac{\left( f-g\right) _{\psi }^{\left[ n-k\right] }I_{a+}^{\left( 1-\beta \right) \left( n-\alpha \right) ;\psi }\left( f-g\right) \left( a\right) }{\Gamma \left( \gamma +1-k\right) }$.

To prove the reverse, we assume that
\begin{equation}\label{Z}
f\left( x\right) =g\left( x\right) +\overset{n}{\underset{k=1}{\sum }}
c_{k}\left( \psi \left( x\right) -\psi \left( a\right) \right) ^{\gamma -k}.
\end{equation}

Applying the derivative operator $\text{ }^{H}\mathbb{D}^{\alpha,\beta;\beta}_{a+}(\cdot)$ on both sides of the {\rm Eq.(\ref{Z})}, we get
\begin{equation*}
\text{ }^{H}\mathbb{D}_{a+}^{\alpha ,\beta ;\psi }f\left( x\right) =\text{ }^{H}\mathbb{D}_{a+}^{\alpha ,\beta ;\psi }g\left( x\right) +\overset{n}{\underset{k=1}{\sum }}c_{k}\text{ }^{H}\mathbb{D}_{a+}^{\alpha ,\beta ;\psi }\left( \psi \left( x\right) -\psi \left( a\right) \right) ^{\gamma -k}.
\end{equation*}

Using {\rm Eq.(\ref{RE1})},
$\text{ }^{H}\mathbb{D}_{a+}^{\alpha ,\beta ;\psi }\left( \psi \left( x\right) -\psi
\left( a\right) \right) ^{k}=0$, $k=0,1,2,...,n-1$, we conclude
\begin{equation*}
\text{ }^{H}\mathbb{D}_{a+}^{\alpha ,\beta ;\psi }f\left( x\right) =\text{ }^{H}\mathbb{D}_{a+}^{\alpha ,\beta ;\psi}g\left( x\right) .
\end{equation*}

\end{proof}
\begin{lemma}\label{lem4} Let $n-1\leq \gamma <n$ and $f\in C_{\gamma}[a,b]$. Then
\begin{equation*}
I_{a+}^{\alpha ;\psi }f\left( a\right) =\underset{x\rightarrow a+}{\lim }I_{a+}^{\alpha ;\psi }f\left( x\right) =0,\text{ }n-1\leq \gamma <\alpha .
\end{equation*}
\end{lemma}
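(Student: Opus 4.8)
The plan is to reduce the statement to the explicit action of $I_{a+}^{\alpha;\psi}$ on a pure power of $\psi(x)-\psi(a)$, which is exactly what Lemma \ref{LE1} provides, and then to exploit that the resulting exponent is strictly positive because $\alpha>\gamma$.

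First I would unpack the hypothesis $f\in C_{\gamma;\psi}[a,b]$. By definition the function $g(t):=(\psi(t)-\psi(a))^{\gamma}f(t)$ extends continuously to $[a,b]$, hence is bounded with $M:=\|g\|_{C[a,b]}=\|f\|_{C_{\gamma;\psi}[a,b]}<\infty$, and $f(t)=(\psi(t)-\psi(a))^{-\gamma}g(t)$ on $(a,b]$. Inserting this into the definition (\ref{A}) of the $\psi$-fractional integral and taking absolute values under the integral sign gives, for every $x\in(a,b]$,
\[
\left|I_{a+}^{\alpha;\psi}f(x)\right|\le \frac{M}{\Gamma(\alpha)}\int_{a}^{x}\psi'(t)\,(\psi(x)-\psi(t))^{\alpha-1}(\psi(t)-\psi(a))^{-\gamma}\,dt.
\]

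The key step is to recognize the integral on the right as $\Gamma(\alpha)$ times $I_{a+}^{\alpha;\psi}$ applied to the power function $t\mapsto(\psi(t)-\psi(a))^{\delta-1}$ with $\delta=1-\gamma$. Provided $\delta=1-\gamma>0$, Lemma \ref{LE1}(1) applies verbatim and evaluates this integral in closed form, so that
\[
\left|I_{a+}^{\alpha;\psi}f(x)\right|\le \frac{M\,\Gamma(1-\gamma)}{\Gamma(\alpha-\gamma+1)}\,(\psi(x)-\psi(a))^{\alpha-\gamma}.
\]

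Finally I would let $x\to a^{+}$. Since $\psi$ is continuous we have $\psi(x)-\psi(a)\to 0$, and because $\alpha-\gamma>0$ the right-hand side tends to $0$; this forces $\lim_{x\to a^{+}}I_{a+}^{\alpha;\psi}f(x)=0$, which is the asserted value $I_{a+}^{\alpha;\psi}f(a)$. The only delicate point, and the place where the hypothesis is genuinely used, is the applicability of Lemma \ref{LE1}: one must verify that the weight $(\psi(t)-\psi(a))^{-\gamma}$ is integrable against the kernel near $t=a$, which is precisely the role of the condition $\gamma<\alpha$ (and, for the underlying Beta integral to converge, of $1-\gamma>0$). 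I expect this integrability/endpoint check to be the main obstacle, whereas the passage to the limit afterward is routine.
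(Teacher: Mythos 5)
Your proposal is correct and takes essentially the same route as the paper's own proof: bound $\left\vert f\right\vert$ by $M\left( \psi \left( \cdot \right) -\psi \left( a\right) \right) ^{-\gamma }$ via the weighted-space hypothesis, evaluate $I_{a+}^{\alpha ;\psi }$ on that power function through Lemma \ref{LE1} (with $\delta =1-\gamma $), and let $x\rightarrow a+$ using $\alpha >\gamma $. If anything you are more careful than the paper, which states the constant as $\Gamma \left( n-\gamma \right) /\Gamma \left( \alpha +n-\gamma \right) $ without flagging that Lemma \ref{LE1} needs $1-\gamma >0$, a restriction you make explicit and which is consistent with the paper's definition of $C_{\gamma ;\psi }\left[ a,b\right] $ only for $0\leq \gamma <1$.
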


\begin{proof}
Note that, $I_{a+}^{\alpha ;\psi }f\left( x\right) \in C_{\gamma }\left[ a,b\right] $ is bounded {\rm \cite{AHMJ}}. Since $f\in C_{\gamma }\left[ a,b\right] $ then $\left( \psi \left( x\right) -\psi \left( a\right) \right) ^{\gamma }f\left(x\right) $ is continuous on $[a,b]$ and thus
\begin{equation}\label{bacurinho}
\left\vert \left( \psi \left( x\right) -\psi \left( a\right) \right)
^{\gamma }f\left( x\right) \right\vert <M\Rightarrow \left\vert f\left(
x\right) \right\vert <\left\vert \left( \psi \left( x\right) -\psi \left(a\right) \right) ^{-\gamma }\right\vert M,
\end{equation}
$x\in[a,b]$ for some positive constant $M$.

Applying operator $I_{a+}^{\alpha ;\psi }(\cdot)$ on both sides of {\rm Eq.(\ref{bacurinho})} we obtain
\begin{eqnarray}
\left\vert I_{a+}^{\alpha ;\psi }f\left( x\right) \right\vert  &<&\left\vert
I_{a+}^{\alpha ;\psi }\left( \psi \left( x\right) -\psi \left( a\right)
\right) ^{-\gamma }\right\vert M  \notag \\
&=&M\frac{\Gamma \left( n-\gamma \right) }{\Gamma \left( \alpha +n-\gamma
\right) }\left( \psi \left( x\right) -\psi \left( a\right) \right) ^{\alpha
-\gamma }.
\end{eqnarray}

Since $\gamma<\alpha$, the right-hand side $\rightarrow 0$ as  $x\rightarrow a+$, then we get
\begin{equation*}
I_{a+}^{\alpha ;\psi }f\left( a\right) =\underset{x\rightarrow a+}{\lim }I_{a+}^{\alpha ;\psi }f\left( x\right) =0.
\end{equation*}
\end{proof}

\begin{theorem} Let $f\in C^{1}[a,b]$, $\alpha>0$ and $0\leq\beta\leq 1$, we have
\begin{equation*}
^{H}\mathbb{D}_{a+}^{\alpha ,\beta ;\psi }I_{a+}^{\alpha ;\psi }f\left( x\right)
=f\left( x\right) \text{ and }^{H}\mathbb{D}_{b-}^{\alpha ,\beta ;\psi
}I_{b-}^{\alpha ;\psi }f\left( x\right) =f\left( x\right) .
\end{equation*}
\end{theorem}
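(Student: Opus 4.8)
The plan is to peel the $\psi$-Hilfer operator down to a purely fractional inversion of order less than one, using the composition form of the operator together with the semigroup law of Lemma \ref{LE}. Writing $\gamma=\alpha+\beta(n-\alpha)$ and inserting $I_{a+}^{\alpha;\psi}f$ into the defining formula Eq.(\ref{HIL}), I would first merge the two innermost integrals by Lemma \ref{LE},
\begin{equation*}
^{H}\mathbb{D}_{a+}^{\alpha,\beta;\psi}I_{a+}^{\alpha;\psi}f(x)=I_{a+}^{\beta(n-\alpha);\psi}\left(\frac{1}{\psi'(x)}\frac{d}{dx}\right)^{n}I_{a+}^{n-(\gamma-\alpha);\psi}f(x),
\end{equation*}
since $(1-\beta)(n-\alpha)+\alpha=n-(\gamma-\alpha)$. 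Because $n-1<\alpha<n$ and $0\le\beta\le1$ force $\sigma:=\gamma-\alpha=\beta(n-\alpha)\in[0,1)$, the exponent $n-(\gamma-\alpha)$ lies in $(n-1,n]$, so I would split $I_{a+}^{n-\sigma;\psi}=I_{a+}^{n-1;\psi}I_{a+}^{1-\sigma;\psi}$ (again Lemma \ref{LE}) and absorb $n-1$ of the operators $\tfrac{1}{\psi'}\tfrac{d}{dx}$ against $I_{a+}^{n-1;\psi}$ by the first-order identity $\tfrac{1}{\psi'(x)}\tfrac{d}{dx}I_{a+}^{1;\psi}g=g$ iterated $n-1$ times. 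Note the $I_{a+}^{\alpha;\psi}$ cushion keeps every intermediate function smooth enough for these operations to be classical, even when $f$ is only $C^1$.

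This collapses everything to
\begin{equation*}
^{H}\mathbb{D}_{a+}^{\alpha,\beta;\psi}I_{a+}^{\alpha;\psi}f(x)=I_{a+}^{\sigma;\psi}\left(\frac{1}{\psi'(x)}\frac{d}{dx}I_{a+}^{1-\sigma;\psi}f(x)\right)=I_{a+}^{\sigma;\psi}\mathcal{D}_{a+}^{\sigma;\psi}f(x),
\end{equation*}
recognising the bracket as the $\psi$-Riemann--Liouville derivative of order $\sigma<1$. For $\sigma=0$ (the case $\beta=0$) this is already $f$. For $0<\sigma<1$ the remaining task is the inversion $I_{a+}^{\sigma;\psi}\mathcal{D}_{a+}^{\sigma;\psi}f=f$, which I would obtain from the $\psi$-Riemann--Liouville inversion formula
\begin{equation*}
I_{a+}^{\sigma;\psi}\mathcal{D}_{a+}^{\sigma;\psi}f(x)=f(x)-\frac{(\psi(x)-\psi(a))^{\sigma-1}}{\Gamma(\sigma)}I_{a+}^{1-\sigma;\psi}f(a),
\end{equation*}
derived by the same integration-by-parts device used in Theorem \ref{teo5}. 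The right-sided identity is proved identically after inserting $I_{b-}^{\alpha;\psi}f$ into Eq.(\ref{HIL1}) and carrying the factors $(-1)^{n}$.

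The main obstacle is this last equality, since the boundary term does not vanish for an arbitrary function; this is precisely what Lemma \ref{lem4} is for. As $f\in C^{1}[a,b]\subset C[a,b]=C_{0;\psi}[a,b]$ and $1-\sigma>0$, Lemma \ref{lem4} (applied with weight $0$ and integration order $1-\sigma$) gives $I_{a+}^{1-\sigma;\psi}f(a)=\lim_{x\to a+}I_{a+}^{1-\sigma;\psi}f(x)=0$, so the subtracted term drops and the identity closes. The only genuine care lies in deriving the inversion formula at the singular endpoint $t=x$, where $(\psi(x)-\psi(t))^{\sigma-1}$ is unbounded; I would sidestep this by integrating by parts inside $I_{a+}^{1-\sigma;\psi}f$ first, moving the derivative onto $f$ (legitimate since $f\in C^{1}$) and producing a non-singular kernel, after which Lemma \ref{LE} and the power rule of Lemma \ref{LE1} yield $I_{a+}^{\sigma;\psi}\mathcal{D}_{a+}^{\sigma;\psi}f=f$ directly.
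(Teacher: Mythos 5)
Your proposal is correct and follows essentially the same route as the paper: insert $I_{a+}^{\alpha;\psi}f$ into the definition, merge the inner integrals by the semigroup law (Lemma \ref{LE}) to get $I_{a+}^{\gamma-\alpha;\psi}\mathcal{D}_{a+}^{\gamma-\alpha;\psi}f$, then invert this composition via the boundary-term formula (the paper cites Theorem \ref{teo5}; you re-derive the one-term version by the same integration-by-parts device) and annihilate the boundary term with Lemma \ref{lem4}. Your explicit collapse to a first-order operator with a single boundary term is merely a more detailed rendering of the identical argument.
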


\begin{proof}
In fact, we have
\begin{eqnarray*}
^{H}\mathbb{D}_{a+}^{\alpha ,\beta ;\psi }I_{a+}^{\alpha ;\psi }f\left( x\right) 
&=&I_{a+}^{\gamma -\alpha;\psi }\left( \frac{1}{\psi ^{\prime }\left( x\right) }%
\frac{d}{dx}\right) ^{n}I_{a+}^{\left( 1-\beta \right) \left( n-\alpha
\right) ;\psi }I_{a+}^{\alpha ;\psi }f\left( x\right)   \notag \\
&=&I_{a+}^{\gamma -\alpha;\psi }\left( \frac{1}{\psi ^{\prime }\left( x\right) }%
\frac{d}{dx}\right) ^{n}I_{a+}^{n-\beta n+\beta \alpha ;\psi }f\left(
x\right)   \notag \\
&=&I_{a+}^{\gamma -\alpha;\psi }\mathcal{D}_{a+}^{\gamma -\alpha ;\psi }f\left( x\right) .
\end{eqnarray*}

Using {\rm Theorem \ref{teo5}} and {\rm Lemma {\rm \ref{lem4}}}, we conclude that
\begin{eqnarray*}
\text{ }^{H}\mathbb{D}_{a+}^{\alpha ,\beta ;\psi }I_{a+}^{\alpha ;\psi }f\left( x\right) 
&=&I_{a+}^{\gamma -\alpha;\psi }\mathcal{D}_{a+}^{\gamma -\alpha ;\psi }f\left( x\right)  
\notag \\
&=&f\left( x\right) -\overset{n}{\underset{k=1}{\sum }}\frac{\left( \psi
\left( x\right) -\psi \left( a\right) \right) ^{\gamma -k}}{\Gamma \left(
\gamma +1-k\right) }f_{\psi }^{\left[ n-k\right] }I_{a+}^{\left( 1-\beta
\right) \left( n-\alpha \right) ;\psi }f\left( a\right)   \notag \\
&=&f\left( x\right) .
\end{eqnarray*}
\end{proof}

The next result concerns the law of the semigroup between operators $I^{\alpha;\psi}_{a+}(\cdot)$ and $^{H}\mathbb{D}^{\alpha,\beta;\psi}_{a+}(\cdot)$.
\begin{theorem} Let $n-1<\alpha<n$, $n\in\mathbb{N}$ and $0\leq \beta \leq 1$. If $f\in C^{m+n}[a,b] $, $m,n\in\mathbb{N}$, then for all $k\in\mathbb{N}$ we have
\begin{equation*}
\left( I_{a+}^{\alpha ;\psi }\right) ^{k}\left( ^{H}\mathbb{D}_{a+}^{\alpha ,\beta ;\psi }\right) ^{m}f\left( x\right) =\frac{\left( ^{H}\mathbb{D}_{a+}^{\alpha ,\beta ;\psi }\right) ^{m}f\left( c\right) \left( \psi \left( x\right) -\psi \left( a\right) \right) ^{k\alpha }}{\Gamma \left( k\alpha +1\right) }
\end{equation*}
and
\begin{equation*}
\left( I_{b-}^{\alpha ;\psi }\right) ^{k}\left( ^{H}\mathbb{D}_{b-}^{\alpha ,\beta ;\psi }\right) ^{m}f\left( x\right) =\frac{\left( ^{H}\mathbb{D}_{b-}^{\alpha ,\beta ;\psi }\right) ^{m}f\left( c\right) \left( \psi \left( b\right) -\psi \left( x\right) \right) ^{k\alpha }}{\Gamma \left( k\alpha +1\right) },
\end{equation*}
for some $c\in (a,x)$ and $d \in (x,b)$.
\end{theorem}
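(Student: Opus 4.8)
The plan is to collapse the $k$-fold $\psi$-integral into a single one via the semigroup law and then pull the point value of $\big({}^{H}\mathbb{D}_{a+}^{\alpha,\beta;\psi}\big)^m f$ out of the integral by a weighted mean value theorem. Write $g(x):=\big({}^{H}\mathbb{D}_{a+}^{\alpha,\beta;\psi}\big)^m f(x)$. By Lemma~\ref{LE} applied $k-1$ times, $\big(I_{a+}^{\alpha;\psi}\big)^k = I_{a+}^{k\alpha;\psi}$, hence
\[
\big(I_{a+}^{\alpha;\psi}\big)^k g(x) = I_{a+}^{k\alpha;\psi}g(x) = \frac{1}{\Gamma(k\alpha)}\int_a^x \psi'(t)\big(\psi(x)-\psi(t)\big)^{k\alpha-1}g(t)\,dt .
\]

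Next I would invoke the weighted mean value theorem for integrals. Since $\psi$ is increasing, $\psi'(t)\geq 0$, and $\big(\psi(x)-\psi(t)\big)^{k\alpha-1}\geq 0$ on $(a,x)$ with $k\alpha>0$, so the kernel $w(t):=\psi'(t)\big(\psi(x)-\psi(t)\big)^{k\alpha-1}$ is a non-negative integrable weight that is positive on $(a,x)$. Assuming $g$ is continuous on $[a,x]$, there is a point $c\in(a,x)$ with $\int_a^x w(t)g(t)\,dt = g(c)\int_a^x w(t)\,dt$. The substitution $u=\psi(x)-\psi(t)$ evaluates the remaining integral as $\int_a^x w(t)\,dt = \big(\psi(x)-\psi(a)\big)^{k\alpha}/(k\alpha)$; combined with $\Gamma(k\alpha+1)=k\alpha\,\Gamma(k\alpha)$ this gives exactly
\[
\big(I_{a+}^{\alpha;\psi}\big)^k g(x) = \frac{g(c)\big(\psi(x)-\psi(a)\big)^{k\alpha}}{\Gamma(k\alpha+1)} .
\]
The right-sided identity follows from the symmetric argument, using the right semigroup law of Lemma~\ref{LE} and the kernel $\psi'(t)\big(\psi(t)-\psi(x)\big)^{k\alpha-1}$ on $(x,b)$, which produces the intermediate point $d\in(x,b)$.

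The step I expect to demand the most care is verifying that $g=\big({}^{H}\mathbb{D}_{a+}^{\alpha,\beta;\psi}\big)^m f$ is genuinely continuous on the closed interval $[a,b]$, since that continuity is exactly what licenses the mean value theorem and lets the intermediate point be taken in the open interval. This is the purpose of the hypothesis $f\in C^{m+n}[a,b]$: each application of the $\psi$-Hilfer operator consumes classical derivatives through the factor $\big(\tfrac{1}{\psi'}\tfrac{d}{dx}\big)^n$ in Eq.(\ref{HIL}), while the $\psi$-integrations only improve regularity, so sufficient smoothness of $f$ keeps every iterate continuous. Using the representation ${}^{H}\mathbb{D}_{a+}^{\alpha,\beta;\psi}=I_{a+}^{\gamma-\alpha;\psi}\mathcal{D}_{a+}^{\gamma;\psi}$ of Eq.(\ref{HIL2}) together with the boundedness established earlier, I would check that each iterate lies in $C_{\gamma;\psi}[a,b]$ and, via the vanishing-at-$a$ behaviour of $\psi$-integrals in Lemma~\ref{lem4}, that it extends continuously to $t=a$; settling this endpoint issue is the delicate point that must precede the application of the mean value theorem.
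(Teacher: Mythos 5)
Your proposal follows essentially the same route as the paper's proof: collapse $\left( I_{a+}^{\alpha ;\psi }\right)^{k}$ into $I_{a+}^{k\alpha ;\psi }$ via Lemma~\ref{LE}, apply the (weighted) mean value theorem for integrals to extract $\left( ^{H}\mathbb{D}_{a+}^{\alpha ,\beta ;\psi }\right)^{m}f\left( c\right)$, evaluate the remaining kernel integral as $\left( \psi \left( x\right) -\psi \left( a\right) \right)^{k\alpha }/\left( k\alpha \right)$, and use $k\alpha \,\Gamma \left( k\alpha \right) =\Gamma \left( k\alpha +1\right)$. If anything, your attention to the non-negativity of the weight and to the continuity of $\left( ^{H}\mathbb{D}_{a+}^{\alpha ,\beta ;\psi }\right)^{m}f$ (the hypothesis $f\in C^{m+n}[a,b]$) is more careful than the paper, which simply cites the mean value theorem without addressing these points.
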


\begin{proof} Using {\rm Lemma \ref{LE}}, we get
\begin{equation*}
\left( I_{a+}^{\alpha ;\psi }\right) ^{k}=I_{a+}^{\alpha ;\psi }\cdot \cdot \cdot I_{a+}^{\alpha ;\psi }=I_{a+}^{k\alpha ;\psi }.
\end{equation*}

So,
\begin{eqnarray*}
\left( I_{a+}^{\alpha ;\psi }\right) ^{k}\left( ^{H}\mathbb{D}_{a+}^{\alpha ,\beta ;\psi
}\right) ^{m}f\left( x\right)  &=&I_{a+}^{k\alpha ;\psi }\left(
^{H}\mathbb{D}_{a+}^{\alpha ,\beta ;\psi }\right) ^{m}f\left( x\right)   \notag \\
&=&\frac{1}{\Gamma \left( k\alpha \right) }\int_{a}^{x}\psi ^{\prime }\left(
t\right) \left( \psi \left( x\right) -\psi \left( t\right) \right) ^{k\alpha
-1}\left( ^{H}\mathbb{D}_{a+}^{\alpha ,\beta ;\psi }\right) ^{m}f\left( t\right) dt 
\notag \\
&=&\frac{\left( ^{H}\mathbb{D}_{a+}^{\alpha ,\beta ;\psi }\right) ^{m}f\left( c\right) }{%
\Gamma \left( k\alpha \right) }\int_{a}^{x}\psi ^{\prime }\left( t\right)
\left( \psi \left( x\right) -\psi \left( t\right) \right) ^{k\alpha -1}dt 
\notag \\
&=&\frac{\left( ^{H}\mathbb{D}_{a+}^{\alpha ,\beta ;\psi }\right) ^{m}f\left( c\right)
\left( \psi \left( x\right) -\psi \left( a\right) \right) ^{k\alpha }}{%
k\alpha \Gamma \left( k\alpha \right) }  \notag \\
&=&\frac{\left( ^{H}\mathbb{D}_{a+}^{\alpha ,\beta ;\psi }\right) ^{m}f\left( c\right)
\left( \psi \left( x\right) -\psi \left( a\right) \right) ^{k\alpha }}{%
\Gamma \left( k\alpha +1\right) },
\end{eqnarray*}
with $c\in(a,x)$, guaranteed by the mean value theorem for integrals {\rm \cite{COU}}.
\end{proof}

\section{Miscellaneous results and examples}

The convergence of functions has great importance for mathematics, specially in analysis, functional analysis, distributions theory and others. In this section, using the fractional $\psi$-Hilfer operator and the fractional integral operator, we present some results of uniformly convergent sequence. In addition, we discuss two examples involving the classical Mittag-Leffler function and the function $(\psi(x)-\psi(a))^{\alpha}$.

\begin{theorem}\label{Teo9} Let $\,n-1<\alpha <n$, $I=\left[ a,b\right] $ be a finite or infinite interval and $\psi \in C\left[ a,b\right] $ a increasing function such that $\psi ^{\prime }\left( x\right) \neq 0,$ for all $x\in I$. Assume that $\left( f_{n}\right) _{n=1}^{\infty }$ is a uniformly convergent sequence of continuous functions on $\left[ a,b\right] $. Then we may interchange the fractional integral operator and the limit process, i.e. 
\begin{equation*}
I_{a+}^{\alpha ;\psi }\underset{n\rightarrow \infty }{\lim }f_{n}\left( x\right) =\underset{n\rightarrow \infty }{\lim }I_{a+}^{\alpha ;\psi }f_{n}\left( x\right) .
\end{equation*}

In particular, the sequence of function $\left( I_{a+}^{\alpha ;\psi }f_{n}\right) _{n=1}^{\infty }$ is uniformly convergent.
\end{theorem}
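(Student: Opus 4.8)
The plan is to reduce the assertion to the elementary fact that the operator $I_{a+}^{\alpha;\psi}$ has a kernel whose total mass over $[a,x]$ is uniformly bounded on $[a,b]$, so that uniform smallness of $f_n-f$ transfers directly to uniform smallness of $I_{a+}^{\alpha;\psi}(f_n-f)$. First I would set $f(x):=\lim_{n\to\infty}f_n(x)$ and record that a uniform limit of continuous functions on a closed interval is itself continuous, whence $f\in C[a,b]$ and $I_{a+}^{\alpha;\psi}f$ is well defined by Eq.(\ref{A}); this already legitimises the left-hand side of the claimed identity.

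The core of the argument is a single estimate. By linearity of the integral, $I_{a+}^{\alpha;\psi}f_n(x)-I_{a+}^{\alpha;\psi}f(x)=I_{a+}^{\alpha;\psi}(f_n-f)(x)$, and moving the absolute value inside the integral in Eq.(\ref{A}) gives
\begin{equation*}
\left\vert I_{a+}^{\alpha;\psi}(f_n-f)(x)\right\vert\leq\frac{1}{\Gamma(\alpha)}\int_a^x\psi'(t)\left(\psi(x)-\psi(t)\right)^{\alpha-1}\left\vert f_n(t)-f(t)\right\vert\,dt.
\end{equation*}
Writing $\varepsilon_n:=\left\Vert f_n-f\right\Vert_{C[a,b]}=\max_{t\in[a,b]}\left\vert f_n(t)-f(t)\right\vert$, I would pull $\varepsilon_n$ out of the integral, leaving precisely $\varepsilon_n\,I_{a+}^{\alpha;\psi}1$, and then evaluate this by applying Lemma \ref{LE1} with $\delta=1$ (that is, to the constant function $1=(\psi(t)-\psi(a))^{0}$), which yields $I_{a+}^{\alpha;\psi}1=(\psi(x)-\psi(a))^{\alpha}/\Gamma(\alpha+1)$. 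Consequently
\begin{equation*}
\left\vert I_{a+}^{\alpha;\psi}(f_n-f)(x)\right\vert\leq\varepsilon_n\,\frac{\left(\psi(x)-\psi(a)\right)^{\alpha}}{\Gamma(\alpha+1)}\leq\varepsilon_n\,\frac{\left(\psi(b)-\psi(a)\right)^{\alpha}}{\Gamma(\alpha+1)},
\end{equation*}
where the final step uses that $\psi$ is increasing, so that $\psi(x)-\psi(a)\leq\psi(b)-\psi(a)$ for every $x\in[a,b]$.

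The right-hand side of the last display is independent of $x$ and tends to $0$ as $n\to\infty$, since $\varepsilon_n\to0$ by the assumed uniform convergence. Taking the maximum over $x\in[a,b]$ therefore shows $\left\Vert I_{a+}^{\alpha;\psi}f_n-I_{a+}^{\alpha;\psi}f\right\Vert_{C[a,b]}\to0$; this is simultaneously the interchange identity $I_{a+}^{\alpha;\psi}\lim_n f_n=\lim_n I_{a+}^{\alpha;\psi}f_n$ and the asserted uniform convergence of the sequence $(I_{a+}^{\alpha;\psi}f_n)_{n=1}^{\infty}$. I do not expect a genuine obstacle here: the only point requiring care is that the bound on the kernel integral be made uniform in $x$, so that one obtains uniform and not merely pointwise convergence, and this is exactly what the explicit evaluation via Lemma \ref{LE1} together with the monotonicity of $\psi$ provides. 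The right-sided case is entirely analogous, using the kernel of Eq.(\ref{I3}) and the monotonicity bound $\psi(b)-\psi(x)\leq\psi(b)-\psi(a)$.
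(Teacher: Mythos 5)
Your proposal is correct and takes essentially the same route as the paper's proof: both pull $\left\Vert f_n-f\right\Vert_{\infty}$ out of the integral and bound the remaining kernel integral uniformly by $\left(\psi(b)-\psi(a)\right)^{\alpha}/\Gamma(\alpha+1)$ using the monotonicity of $\psi$, which gives uniform convergence and the interchange simultaneously. The only cosmetic difference is that you evaluate $I_{a+}^{\alpha;\psi}1$ by invoking Lemma \ref{LE1} with $\delta=1$, while the paper computes that same kernel integral directly.
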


\begin{proof}
We denote the limit of the sequence $\left( f_{n}\right) $ by $f$. It is well known that $f$ is continuous we then find
\begin{eqnarray*}
\left\vert I_{a+}^{\alpha ;\psi }f_{n}\left( x\right) -I_{a+}^{\alpha ;\psi
}f\left( x\right) \right\vert &\leq &\frac{1}{\Gamma \left( \alpha \right) }%
\int_{a}^{x}\psi ^{\prime }\left( t\right) \left( \psi \left( x\right) -\psi
\left( t\right) \right) ^{\alpha -1}\left\vert f_{n}\left( t\right) -f\left(
t\right) \right\vert dt  \notag \\
&\leq &\frac{\left\Vert f_{n}\left( t\right) -f\left( t\right) \right\Vert
_{\infty }}{\Gamma \left( \alpha \right) }\frac{\left( \psi \left( x\right)
-\psi \left( a\right) \right) ^{\alpha }}{\alpha }  \notag \\
&\leq &\frac{\left( \psi \left( b\right) -\psi \left( a\right) \right)
^{\alpha }}{\Gamma \left( \alpha +1\right) }\left\Vert f_{n}\left( t\right)
-f\left( t\right) \right\Vert _{\infty }.
\end{eqnarray*}

As $f_{n}$ is a uniformly convergent sequence, we conclude the proof.
\end{proof}

\begin{theorem}\label{Teo10} Let $n-1<\alpha <n,$ $n\in\mathbb{N}$ and $I=\left[ a,b\right] $ be a finite or infinite interval and $\psi \in C\left[ a,b\right] $ a increasing function such that $\psi ^{\prime }\left(
x\right) \neq 0$ for all $x\in I$. Assume that $\left( f_{k}\right)_{k=1}^{\infty }$ is a uniformly convergent sequence of continuous functions on $\left[ a,b\right] $ and $\mathcal{D}_{a+}^{\alpha ;\psi }f_{k}$ exist for every $k.$ Moreover assume that $\left( \mathcal{D}_{a+}^{\alpha ;\psi }f_{k}\right)
_{k=1}^{\infty }$ converge uniformly on $\left[ a+\varepsilon ,b\right)$ for every $\varepsilon >0$. Then, for every $x\in \left( a,b\right)\ $ we have
\begin{equation*}
\underset{k\rightarrow \infty }{\lim }\mathcal{D}_{a+}^{\alpha ;\psi }f_{k}\left(
x\right) =\mathcal{D}_{a+}^{\alpha ;\psi }\underset{k\rightarrow \infty }{\lim }%
f_{k}\left( x\right).
\end{equation*}
\end{theorem}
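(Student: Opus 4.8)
The plan is to exploit the factorisation of the $\psi$-Riemann--Liouville operator as $\mathcal{D}_{a+}^{\alpha;\psi}=\left(\frac{1}{\psi'(x)}\frac{d}{dx}\right)^{n}I_{a+}^{n-\alpha;\psi}$ and to split the interchange of limit and derivative into two independent moves: first pushing the limit through the fractional integral $I_{a+}^{n-\alpha;\psi}$, and then pushing it through the ordinary $n$-fold differential operator $\frac{1}{\psi'}\frac{d}{dx}$. Write $f=\lim_{k}f_{k}$, which is continuous as a uniform limit of continuous functions. By Theorem~\ref{Teo9} applied with exponent $n-\alpha>0$, the sequence $h_{k}:=I_{a+}^{n-\alpha;\psi}f_{k}$ converges uniformly on $[a,b]$ to $h:=I_{a+}^{n-\alpha;\psi}f$. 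Since $\mathcal{D}_{a+}^{\alpha;\psi}f_{k}$ is assumed to exist, each $h_{k}$ is $n$-times differentiable in the $\psi$-sense, and by hypothesis $\left(\frac{1}{\psi'}\frac{d}{dx}\right)^{n}h_{k}=\mathcal{D}_{a+}^{\alpha;\psi}f_{k}$ converges uniformly on every $[a+\varepsilon,b)$.

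The remaining task is the classical one of differentiating a uniformly convergent sequence term by term. To put it in standard form I would perform the substitution $u=\psi(x)$, which turns the operator $\frac{1}{\psi'(x)}\frac{d}{dx}$ into ordinary differentiation $\frac{d}{du}$: setting $\tilde h_{k}(u):=h_{k}(\psi^{-1}(u))$ (legitimate since $\psi$ is increasing with $\psi'\neq 0$, hence invertible), one has $\tilde h_{k}\to\tilde h$ uniformly and $\tilde h_{k}^{(n)}\to\lim_{k}\mathcal{D}_{a+}^{\alpha;\psi}f_{k}\circ\psi^{-1}$ uniformly on each compact set avoiding $\psi(a)$. For $n=1$ the conclusion is immediate from the classical theorem on termwise differentiation (uniform convergence of the functions together with uniform convergence of the derivatives yields $\frac{d}{du}\tilde h=\lim_{k}\frac{d}{du}\tilde h_{k}$), equivalently $\frac{d}{du}\,\tilde h=\lim_k \tilde h_k'$; transforming back gives $\mathcal{D}_{a+}^{\alpha;\psi}f=\lim_{k}\mathcal{D}_{a+}^{\alpha;\psi}f_{k}$ on $(a,b)$. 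One may also argue directly via the fundamental theorem of calculus, integrating the derivatives and passing the uniform limit under the integral.

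The hard part will be the case $n>1$, because iterating the termwise-differentiation theorem requires uniform convergence of the \emph{intermediate} derivatives $\tilde h_{k}^{(j)}$ for $1\le j\le n-1$, which the hypotheses do not grant outright. I would close this gap by an interpolation argument: on each compact interval $[\psi(a+\varepsilon),\psi(b)-\delta]$ a Landau--Kolmogorov type inequality bounds $\big\|\tilde h_{k}^{(j)}-\tilde h_{l}^{(j)}\big\|_{\infty}$ in terms of $\big\|\tilde h_{k}-\tilde h_{l}\big\|_{\infty}$ and $\big\|\tilde h_{k}^{(n)}-\tilde h_{l}^{(n)}\big\|_{\infty}$, both of which tend to $0$; hence each $(\tilde h_{k}^{(j)})$ is uniformly Cauchy and converges, after which the classical theorem applies one derivative at a time. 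The second delicate point, and the reason the statement restricts uniform convergence to $[a+\varepsilon,b)$ and asserts the identity only on the open interval $(a,b)$, is the singular endpoint $a$: although $h_{k}(x)\to 0$ as $x\to a+$, the $\psi$-derivatives of $h_{k}$ generically behave like a negative power of $\psi(x)-\psi(a)$ near $a$, so no uniform control is available at the left endpoint and the interchange can only be justified on compacta bounded away from it.
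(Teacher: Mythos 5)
Your proof is correct, and its skeleton coincides with the paper's: both first invoke Theorem \ref{Teo9} with exponent $n-\alpha$ to pass the limit through $I_{a+}^{n-\alpha;\psi}$, and then must interchange the limit with the outer operator $\left(\frac{1}{\psi'(x)}\frac{d}{dx}\right)^{n}$. The difference lies entirely in how that second step is treated. The paper simply asserts the interchange ``by hypotheses'' --- it never explains why uniform convergence of $h_k=I_{a+}^{n-\alpha;\psi}f_k$ together with uniform convergence of the $n$-th $\psi$-derivatives permits the swap --- whereas you actually prove it: the substitution $u=\psi(x)$ reduces the $\psi$-derivative to an ordinary one, the case $n=1$ is then exactly the classical termwise-differentiation theorem, and for $n>1$ you recover uniform Cauchyness of the intermediate derivatives $\tilde h_k^{(j)}$, $1\le j\le n-1$, on compact subintervals via a Landau--Kolmogorov type inequality, after which the classical theorem can be iterated one order at a time. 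That interpolation step addresses a genuine issue the paper ignores entirely (its hypotheses control only the $0$-th and the $n$-th derivatives, and termwise differentiation cannot be iterated without the intermediate orders), and your closing observation about the singular behaviour near $x=a$ explains why the statement restricts uniform convergence to $[a+\varepsilon,b)$ and asserts the identity only on $(a,b)$. So your argument is best described not as a different route but as a rigorous completion of the paper's sketch: what it buys is a proof valid for every $n$, where the paper's one-line interchange is defensible (modulo citing the classical theorem) only when $n=1$.
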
 

\begin{proof} Using the definition of fractional operator $\mathcal{D}_{a+}^{\alpha ;\psi
}\left( \cdot \right) $ 
\begin{equation*}
\mathcal{D}_{a+}^{\alpha ;\psi }f\left( x\right) =\left( \frac{1}{\psi ^{\prime }\left( x\right) }\frac{d}{dx}\right) ^{n}I_{a+}^{n-\alpha ;\psi }f\left( x\right) 
\end{equation*}
and by {\rm Theorem \ref{Teo9}}, the sequence $\left( I_{a+}^{n-\alpha ;\psi }f_{k}\right) _{k=1}^{\infty }$ converge uniformly, and 
\begin{equation*}
I_{a+}^{n-\alpha ;\psi }\underset{k\rightarrow \infty }{\lim }f_{k}\left(
x\right) =\underset{k\rightarrow \infty }{\lim }I_{a+}^{n-\alpha ;\psi
}f_{k}\left( x\right) .
\end{equation*}

On the other hand, by hypotheses $\mathcal{D}_{a+}^{\alpha ;\psi }f\left(x\right) =\left( \frac{1}{\psi ^{\prime }\left( x\right) }\dfrac{d}{dx} \right) ^{n}I_{a+}^{n-\alpha ;\psi }f\left( x\right) $ is converge uniformly on $\left[ a+\varepsilon ,b\right) $ for every $\varepsilon >0,$ then we obtain 
\begin{eqnarray*}
\underset{k\rightarrow \infty }{\lim }\mathcal{D}_{a+}^{\alpha ;\psi }f_{k}\left( x\right)  &=&\underset{k\rightarrow \infty }{\lim }\left( \frac{1}{\psi ^{\prime }\left( x\right) }\frac{d}{dx}\right) ^{n}I_{a+}^{n-\alpha ;\psi }f_{k}\left( x\right)   \notag \\
&=&\left( \frac{1}{\psi ^{\prime }\left( x\right) }\frac{d}{dx}\right) ^{n}I_{a+}^{n-\alpha ;\psi }\underset{k\rightarrow \infty }{\lim }f_{k}\left( x\right)   \notag \\
&=&\mathcal{D}_{a+}^{\alpha ;\psi }\underset{k\rightarrow \infty }{\lim }f_{k}\left( x\right).
\end{eqnarray*}
\end{proof}

\begin{theorem}\label{Teo11} Let $n-1<\alpha<n$, $n\in \mathbb{N}$, $0\leq \beta \leq 1$ and $I=\left[a,b\right] $ be a finite or infinite interval and $\psi \in C\left[a,b\right]$ a increasing function such that $\psi ^{\prime }\left( x\right) \neq 0$ for all $x\in I$. Assume that $\left( f_{k}\right) _{k=1}^{\infty }$ is a uniformly convergent sequence of continuous functions on $\left[ a,b\right] $ and $^{H}\mathbb{D}_{a+}^{\alpha ,\beta ;\psi }f_{k}$ exist for every $k$. Moreover assume that $\left( ^{H}\mathbb{D}_{a+}^{\alpha ,\beta ;\psi }f_{k}\right)_{k=1}^{\infty }$ converge uniformly on $\left[ a+\varepsilon ,b\right)$ for every $\varepsilon >0$. Then, for every $x\in \left( a,b\right)$ we have
\begin{equation*}
\underset{k\rightarrow \infty }{\lim }\text{ }^{H}\mathbb{D}_{a+}^{\alpha ,\beta
;\psi }f_{k}\left( x\right) =\text{ }^{H}\mathbb{D}_{a+}^{\alpha ,\beta ;\psi }%
\underset{k\rightarrow \infty }{\lim }f_{k}\left( x\right).
\end{equation*}
\end{theorem}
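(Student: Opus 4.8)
The plan is to reduce this statement about the $\psi$-Hilfer derivative to the already-proved interchange theorem for the $\psi$-Riemann--Liouville derivative, namely Theorem \ref{Teo10}. The key structural observation is Eq.(\ref{HIL2}), which expresses the $\psi$-Hilfer operator as a composition
\begin{equation*}
^{H}\mathbb{D}_{a+}^{\alpha ,\beta ;\psi }f\left( x\right) =I_{a+}^{\gamma -\alpha ;\psi }\mathcal{D}_{a+}^{\gamma ;\psi }f\left( x\right),
\qquad \gamma =\alpha +\beta \left( n-\alpha \right).
\end{equation*}
Since the hypotheses guarantee that $\left( f_{k}\right)$ converges uniformly and that $\left( {}^{H}\mathbb{D}_{a+}^{\alpha ,\beta ;\psi }f_{k}\right)$ converges uniformly on every $\left[ a+\varepsilon ,b\right)$, I would first argue that the intermediate sequence $\left( \mathcal{D}_{a+}^{\gamma ;\psi }f_{k}\right)$ also converges uniformly on such intervals, so that each operator in the composition can be passed through the limit one at a time.

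First I would apply Theorem \ref{Teo10} with order $\gamma$ in place of $\alpha$ to the inner operator $\mathcal{D}_{a+}^{\gamma ;\psi }$: because $\left( f_{k}\right)$ is uniformly convergent and, by hypothesis, $\left( \mathcal{D}_{a+}^{\gamma ;\psi }f_{k}\right)$ converges uniformly on $\left[ a+\varepsilon ,b\right)$, that theorem yields
\begin{equation*}
\underset{k\rightarrow \infty }{\lim }\mathcal{D}_{a+}^{\gamma ;\psi }f_{k}\left( x\right) =\mathcal{D}_{a+}^{\gamma ;\psi }\underset{k\rightarrow \infty }{\lim }f_{k}\left( x\right).
\end{equation*}
Next I would apply Theorem \ref{Teo9} (with order $\gamma-\alpha>0$) to the outer fractional integral $I_{a+}^{\gamma -\alpha ;\psi }$: since $\left( \mathcal{D}_{a+}^{\gamma ;\psi }f_{k}\right)$ converges uniformly, that theorem lets me interchange $I_{a+}^{\gamma -\alpha ;\psi }$ with the limit. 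Composing the two interchanges along Eq.(\ref{HIL2}) gives
\begin{equation*}
\underset{k\rightarrow \infty }{\lim }\, {}^{H}\mathbb{D}_{a+}^{\alpha ,\beta ;\psi }f_{k}\left( x\right)
=I_{a+}^{\gamma -\alpha ;\psi }\underset{k\rightarrow \infty }{\lim }\mathcal{D}_{a+}^{\gamma ;\psi }f_{k}\left( x\right)
=I_{a+}^{\gamma -\alpha ;\psi }\mathcal{D}_{a+}^{\gamma ;\psi }\underset{k\rightarrow \infty }{\lim }f_{k}\left( x\right)
={}^{H}\mathbb{D}_{a+}^{\alpha ,\beta ;\psi }\underset{k\rightarrow \infty }{\lim }f_{k}\left( x\right),
\end{equation*}
which is exactly the claim. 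The right-sided identity follows verbatim using Eq.(\ref{HIL3}) and the right-sided versions of the two cited theorems.

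The main obstacle, and the point that the statement as written glosses over, is justifying that the intermediate sequence $\left( \mathcal{D}_{a+}^{\gamma ;\psi }f_{k}\right)$ converges uniformly on $\left[ a+\varepsilon ,b\right)$ — this uniformity is the hypothesis that Theorem \ref{Teo10} actually needs, yet here I am only handed uniform convergence of $\left( {}^{H}\mathbb{D}_{a+}^{\alpha ,\beta ;\psi }f_{k}\right)$, i.e. of $\left( I_{a+}^{\gamma -\alpha ;\psi }\mathcal{D}_{a+}^{\gamma ;\psi }f_{k}\right)$. Recovering uniform convergence of $\mathcal{D}_{a+}^{\gamma ;\psi }f_{k}$ from uniform convergence of its $\psi$-integral is not automatic, since fractional integration is smoothing; the cleanest route is to invoke Remark \ref{H1}, which recasts $^{H}\mathbb{D}_{a+}^{\alpha ,\beta ;\psi }$ as a $\psi$-Caputo derivative of $g_k=I_{a+}^{\left( 1-\beta \right)\left( n-\alpha\right) ;\psi }f_k$, so that the classical operator $\left(\tfrac{1}{\psi'}\tfrac{d}{dx}\right)^n$ sits at the core and Theorem \ref{Teo10}'s own proof mechanism (passing $\left(\tfrac{1}{\psi'}\tfrac{d}{dx}\right)^n$ through the uniform limit of the integrated sequence) applies directly. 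I would therefore frame the argument in terms of the representation in Remark \ref{H1} rather than pretending the interior uniform convergence is free, and note explicitly where the hypothesis on the derivatives is consumed.
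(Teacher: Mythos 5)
Your proposal is essentially the paper's own proof: the paper likewise writes ${}^{H}\mathbb{D}_{a+}^{\alpha ,\beta ;\psi }=I_{a+}^{\gamma -\alpha ;\psi }\mathcal{D}_{a+}^{\gamma ;\psi }$ as in Eq.(\ref{HIL2}) and passes the limit through the two factors by citing Theorem \ref{Teo9} and Theorem \ref{Teo10}, exactly as you do (only the order in which the two interchanges are presented differs). The gap you flag is genuine, and it is present, unacknowledged, in the paper's own argument as well: both proofs invoke Theorems \ref{Teo9} and \ref{Teo10} as if uniform convergence of the intermediate sequence $\left( \mathcal{D}_{a+}^{\gamma ;\psi }f_{k}\right)$ on $\left[ a+\varepsilon ,b\right)$ were available, whereas the stated hypothesis only gives uniform convergence of $\left( {}^{H}\mathbb{D}_{a+}^{\alpha ,\beta ;\psi }f_{k}\right) =\left( I_{a+}^{\gamma -\alpha ;\psi }\mathcal{D}_{a+}^{\gamma ;\psi }f_{k}\right)$, so the paper does not carry out the repair you sketch via Remark \ref{H1} either.
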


\begin{proof} Using the definition of fractional operator $\mathcal{D}_{a+}^{\alpha ;\psi }\left( \cdot \right)$ 
\begin{equation*}
\mathcal{D}_{a+}^{\alpha ;\psi }f\left( x\right) =\left( \frac{1}{\psi ^{\prime }\left( x\right) }\frac{d}{dx}\right) ^{n}I_{a+}^{n-\alpha ;\psi }f\left( x\right) ,
\end{equation*}
{\rm Theorem \ref{Teo9}} and {\rm Theorem \ref{Teo10}}, we get
\begin{eqnarray*}
\underset{k\rightarrow \infty }{\lim }\text{ }^{H}\mathbb{D}_{a+}^{\alpha ,\beta
;\psi }f_{k}\left( x\right)  &=&\underset{k\rightarrow \infty }{\lim }%
I_{a+}^{\gamma -\alpha ;\psi }\mathcal{D}_{a+}^{\gamma ;\psi }f_{k}\left( x\right)  
\notag \\
&=&I_{a+}^{\gamma -\alpha ;\psi }\underset{k\rightarrow \infty }{\lim }%
\mathcal{D}_{a+}^{\gamma ;\psi }f_{k}\left( x\right)   \notag \\
&=&I_{a+}^{\gamma -\alpha ;\psi }\mathcal{D}_{a+}^{\gamma ;\psi }\underset{%
k\rightarrow \infty }{\lim }f_{k}\left( x\right)   \notag \\
&=&\text{ }^{H}\mathbb{D}_{a+}^{\alpha ,\beta ;\psi }\underset{k\rightarrow \infty }{%
\lim }f_{k}\left( x\right).
\end{eqnarray*}		

\end{proof}

\begin{lemma}\label{LE5} Given $\delta\in\mathbb{R}$, consider the functions
$f\left( x\right) =\left( \psi \left( x\right) -\psi \left( a\right) \right) ^{\delta -1}$
and
$g\left( x\right) =\left( \psi \left( b\right) -\psi \left( x\right) \right) ^{\delta -1}$, where $\delta > n$. Then, for $\delta>0$, $n-1<\alpha<n$, $0 \leq \beta\leq 1$,
\begin{equation*}
\text{ }^{H}\mathbb{D}_{a+}^{\alpha ,\beta ;\psi }f\left( x\right) =\frac{\Gamma \left(
\delta \right) }{\Gamma \left( \delta -\alpha \right) }\left( \psi \left(
x\right) -\psi \left( a\right) \right) ^{\delta -\alpha -1}
\end{equation*}
and
\begin{equation*}
\text{ }^{H}\mathbb{D}_{a+}^{\alpha ,\beta ;\psi }g\left( x\right) =\frac{\Gamma \left(
\delta \right) }{\Gamma \left( \delta -\alpha \right) }\left( \psi \left(
b\right) -\psi \left( x\right) \right) ^{\delta -\alpha -1}.
\end{equation*}
\end{lemma}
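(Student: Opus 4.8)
The plan is to exploit the factorization of the $\psi$-Hilfer operator recorded in {\rm Eq.(\ref{HIL2})}, namely $^{H}\mathbb{D}_{a+}^{\alpha,\beta;\psi}f = I_{a+}^{\gamma-\alpha;\psi}\,\mathcal{D}_{a+}^{\gamma;\psi}f$ with $\gamma=\alpha+\beta(n-\alpha)$, so that the whole computation reduces to composing two operations whose action on a power of $\psi(x)-\psi(a)$ is already tabulated. First I would apply the $\psi$-Riemann--Liouville derivative $\mathcal{D}_{a+}^{\gamma;\psi}$ to $f(x)=(\psi(x)-\psi(a))^{\delta-1}$. By {\rm Lemma \ref{LE2}(1)}, with the order there taken to be $\gamma$, this gives $\mathcal{D}_{a+}^{\gamma;\psi}f(x)=\frac{\Gamma(\delta)}{\Gamma(\delta-\gamma)}(\psi(x)-\psi(a))^{\delta-\gamma-1}$, which is again a power of $\psi(x)-\psi(a)$, now with the shifted exponent $\delta-\gamma-1$.

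The second step is to feed this power back into the fractional integral $I_{a+}^{\gamma-\alpha;\psi}$ and invoke {\rm Lemma \ref{LE1}(1)}, taking its order to be $\gamma-\alpha$ and its exponent parameter to be $\delta-\gamma$. This produces
\begin{equation*}
I_{a+}^{\gamma-\alpha;\psi}\bigl(\psi(x)-\psi(a)\bigr)^{\delta-\gamma-1}=\frac{\Gamma(\delta-\gamma)}{\Gamma(\delta-\alpha)}\bigl(\psi(x)-\psi(a)\bigr)^{\delta-\alpha-1},
\end{equation*}
where the new exponent collapses because $(\gamma-\alpha)+(\delta-\gamma-1)=\delta-\alpha-1$ and the new Gamma argument is $(\gamma-\alpha)+(\delta-\gamma)=\delta-\alpha$. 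Multiplying the two constants, the factor $\Gamma(\delta-\gamma)$ cancels against its reciprocal, leaving precisely $\frac{\Gamma(\delta)}{\Gamma(\delta-\alpha)}(\psi(x)-\psi(a))^{\delta-\alpha-1}$, the asserted value of $^{H}\mathbb{D}_{a+}^{\alpha,\beta;\psi}f(x)$. For $g(x)=(\psi(b)-\psi(x))^{\delta-1}$ I would run the identical argument using the factorization {\rm Eq.(\ref{HIL3})} together with parts (2) of {\rm Lemma \ref{LE1}} and {\rm Lemma \ref{LE2}}; the extra sign $(-1)^{n}$ there pairs with the parallel identities for $\psi(b)-\psi(x)$ to produce the symmetric formula.

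The arithmetic is routine, so the only point demanding care is verifying that the parameters are admissible at each invocation of the lemmas. The crucial inequality is $\delta-\gamma>0$: since $\gamma=\alpha+\beta(n-\alpha)$ ranges over $[\alpha,n]$ as $\beta$ runs through $[0,1]$, the hypothesis $\delta>n$ forces $\delta-\gamma>0$, which is exactly what {\rm Lemma \ref{LE1}(1)} requires of its exponent parameter and what keeps $\Gamma(\delta-\gamma)$ finite and nonzero, so the cancellation above is legitimate. Likewise $\delta>n>\alpha$ guarantees that $\Gamma(\delta-\alpha)$ and the intermediate derivative formula are well defined. Thus the main (and essentially only) obstacle is this bookkeeping of the admissible exponent ranges; once $\delta>n$ is in force, the two-step composition goes through verbatim.
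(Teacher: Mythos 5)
Your proposal is correct and follows essentially the same route as the paper: both factor the operator via {\rm Eq.(\ref{HIL2})} as $I_{a+}^{\gamma-\alpha;\psi}\mathcal{D}_{a+}^{\gamma;\psi}$, apply {\rm Lemma \ref{LE2}} with order $\gamma$ and then {\rm Lemma \ref{LE1}} with order $\gamma-\alpha$, and cancel the factor $\Gamma(\delta-\gamma)$. Your explicit verification that $\delta-\gamma>0$ (since $\gamma\in[\alpha,n]$ and $\delta>n$) is a point of care the paper leaves implicit, but the argument is the same.
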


\begin{proof} Using the {\rm Lemma \ref{LE1}} and {\rm Lemma \ref{LE2}}, we obtain
\begin{eqnarray*}
\text{ }^{H}\mathbb{D}_{a+}^{\alpha ,\beta ;\psi }f\left( x\right)  &=&I_{a+}^{\gamma -\alpha
;\psi }\mathcal{D}_{a+}^{\gamma ;\psi }f\left( x\right)   \notag \\
&=&I_{a+}^{\gamma -\alpha ;\psi }\mathcal{D}_{a+}^{\gamma ;\psi }\left( \psi \left(
x\right) -\psi \left( a\right) \right) ^{\delta -1}  \notag \\
&=&I_{a+}^{\gamma -\alpha ;\psi }\left( \frac{\Gamma \left( \delta \right) }{%
\Gamma \left( \delta -\gamma \right) }\left( \psi \left( x\right) -\psi
\left( a\right) \right) ^{\delta -\gamma -1}\right)   \notag \\
&=&\frac{\Gamma \left( \delta \right) }{\Gamma \left( \delta -\gamma \right) 
}I_{a+}^{\gamma -\alpha ;\psi }\left( \left( \psi \left( x\right) -\psi
\left( a\right) \right) ^{\delta -\gamma -1}\right)   \notag \\
&=&\frac{\Gamma \left( \delta \right) }{\Gamma \left( \delta -\gamma \right) 
}\frac{\Gamma \left( \delta -\gamma \right) }{\Gamma \left( \delta -\gamma
+\gamma -\alpha \right) }\left( \psi \left( x\right) -\psi \left( a\right)
\right) ^{\delta -\gamma +\gamma -\alpha -1}  \notag \\
&=&\frac{\Gamma \left( \delta \right) }{\Gamma \left( \delta -\alpha \right) 
}\left( \psi \left( x\right) -\psi \left( a\right) \right) ^{\delta -\alpha
-1}.
\end{eqnarray*}

Similarly, the same procedure is performed and we get
\begin{equation*}
\text{ }^{H}\mathbb{D}_{a+}^{\alpha ,\beta ;\psi }g\left( x\right) =\frac{\Gamma \left(
\delta \right) }{\Gamma \left( \delta -\alpha \right) }\left( \psi \left(
b\right) -\psi \left( x\right) \right) ^{\delta -\alpha -1}.
\end{equation*}
\end{proof}


\begin{remark} In particular, given $n\leq k \in \mathbb{N}$ and as $\delta>n$, we have
\begin{equation*}
\text{ }^{H}\mathbb{D}_{a+}^{\alpha ,\beta ;\psi }\left( \psi \left( x\right) -\psi \left( a\right) \right) ^{k}=\frac{k!}{\Gamma \left( k+1-\alpha \right) }\left( \psi \left( x\right) -\psi \left( a\right) \right) ^{k-\alpha }
\end{equation*}
and
\begin{equation*}
\text{ }^{H}\mathbb{D}_{a+}^{\alpha ,\beta ;\psi }\left( \psi \left( b\right) -\psi \left( x\right) \right) ^{k}=\frac{k!}{\Gamma \left( k+1-\alpha \right) }\left( \psi \left( b\right) -\psi \left( x\right) \right) ^{k-\alpha }.
\end{equation*}

On the other hand, for $n>k\in\mathbb{N}_{0}$, we have
\begin{equation}\label{RE1}
\text{ }^{H}\mathbb{D}_{a+}^{\alpha ,\beta ;\psi }\left( \psi \left( x\right) -\psi \left( a\right) \right) ^{k}=\text{ } ^{H}\mathbb{D}_{a+}^{\alpha ,\beta ;\psi }\left( \psi \left(
b\right) -\psi \left( x\right) \right) ^{k}=0,
\end{equation}
since
\begin{equation*}
\text{ }^{H}\mathbb{D}_{a+}^{\alpha ,\beta ;\psi }\left( \psi \left( x\right) -\psi \left( a\right) \right) ^{k}=\frac{k!}{\Gamma \left( k+1-\alpha \right) }\left( \psi \left( x\right) -\psi \left( a\right) \right) ^{k-\alpha }=0.
\end{equation*}
\end{remark}

\begin{lemma} Given $\lambda>0$, $n-1<\alpha<n$ and $\leq \beta\leq 0$. Consider the functions
$f\left( x\right) =\mathbb{E}_{\alpha }\left( \lambda \left( \psi \left( x\right) -\psi \left( a\right) \right) ^{\alpha }\right) $ and $g\left( x\right) =\mathbb{E}_{\alpha }\left( \lambda \left( \psi \left( b\right) -\psi \left( x\right) \right) ^{\alpha }\right) $, where $\mathbb{E}_{\alpha }\left( \cdot \right) $ is the Mittag-Leffler function with one parameter. Then,
\begin{equation*}
\text{ }^{H}\mathbb{D}_{a+}^{\alpha ,\beta ;\psi }f\left( x\right) =\lambda
f\left( x\right) \text{ and  }^{H}\mathbb{D}_{b-}^{\alpha ,\beta ;\psi
}f\left( x\right) =\lambda f\left( x\right) .
\end{equation*}
\end{lemma}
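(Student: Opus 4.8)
The plan is to expand the Mittag-Leffler function in its defining power series and apply the $\psi$-Hilfer operator termwise. Recall that
\begin{equation*}
f(x)=\mathbb{E}_{\alpha}\left(\lambda\left(\psi(x)-\psi(a)\right)^{\alpha}\right)=\sum_{k=0}^{\infty}\frac{\lambda^{k}}{\Gamma(\alpha k+1)}\left(\psi(x)-\psi(a)\right)^{\alpha k},
\end{equation*}
so that each summand is a power of $\psi(x)-\psi(a)$ of exactly the type handled by Lemma \ref{LE5}. First I would write $f$ as the limit of its partial sums $f_{N}(x)=\sum_{k=0}^{N}\frac{\lambda^{k}}{\Gamma(\alpha k+1)}\left(\psi(x)-\psi(a)\right)^{\alpha k}$ and invoke Theorem \ref{Teo11} to pass the operator $^{H}\mathbb{D}_{a+}^{\alpha,\beta;\psi}$ through the limit, reducing the problem to differentiating a single power and summing.

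For the termwise step I would use Lemma \ref{LE5} with $\delta-1=\alpha k$, i.e. $\delta=\alpha k+1$. The $k=0$ term is a constant and is annihilated by the operator by Eq.(\ref{RE1}), so it drops out. For $k\ge 1$ the hypothesis $\delta>n$ of Lemma \ref{LE5} holds, since $\alpha>n-1$ forces $\delta=\alpha k+1\ge\alpha+1>n$; hence
\begin{equation*}
^{H}\mathbb{D}_{a+}^{\alpha,\beta;\psi}\left(\psi(x)-\psi(a)\right)^{\alpha k}=\frac{\Gamma(\alpha k+1)}{\Gamma(\alpha k+1-\alpha)}\left(\psi(x)-\psi(a)\right)^{\alpha k-\alpha}.
\end{equation*}
Substituting this back, the factors $\Gamma(\alpha k+1)$ cancel and the $k$-th term becomes $\frac{\lambda^{k}}{\Gamma(\alpha(k-1)+1)}\left(\psi(x)-\psi(a)\right)^{\alpha(k-1)}$.

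Finally I would re-index with $j=k-1$, which yields
\begin{equation*}
^{H}\mathbb{D}_{a+}^{\alpha,\beta;\psi}f(x)=\lambda\sum_{j=0}^{\infty}\frac{\lambda^{j}}{\Gamma(\alpha j+1)}\left(\psi(x)-\psi(a)\right)^{\alpha j}=\lambda f(x),
\end{equation*}
and the right-sided identity for $g$ follows by the identical computation using the right-sided case of Lemma \ref{LE5}. The main obstacle is the rigorous justification of the termwise differentiation: to apply Theorem \ref{Teo11} I must verify that the partial sums $f_{N}$ converge uniformly on $[a,b]$ and that the sequence $\left(^{H}\mathbb{D}_{a+}^{\alpha,\beta;\psi}f_{N}\right)$ converges uniformly on every $[a+\varepsilon,b)$. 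Both follow from the entire-function growth of $\mathbb{E}_{\alpha}$, which makes the series for $f$ and for $^{H}\mathbb{D}_{a+}^{\alpha,\beta;\psi}f$ (again a Mittag-Leffler-type series) locally uniformly convergent; this is the one place where care beyond formal manipulation is needed.
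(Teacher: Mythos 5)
Your proposal is correct and follows essentially the same route as the paper: expand $\mathbb{E}_{\alpha}$ in its power series, apply $^{H}\mathbb{D}_{a+}^{\alpha ,\beta ;\psi }$ termwise using Lemma \ref{LE5} (with the constant $k=0$ term annihilated as in Eq.(\ref{RE1})), cancel the Gamma factors, and re-index to recover $\lambda f(x)$. You actually add rigor the paper omits, since the paper differentiates the series term by term silently, while you justify the interchange via Theorem \ref{Teo11} and check the hypothesis $\delta>n$ of Lemma \ref{LE5}.
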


\begin{proof} Using the definition of Mittag-Leffler function with one parameter and {\rm Lemma \ref{LE5}}, we have
\begin{eqnarray*}
\text{ }^{H}\mathbb{D}_{a+}^{\alpha ,\beta ;\psi }f\left( x\right)  &=&^{H}%
\mathbb{D}_{a+}^{\alpha ,\beta ;\psi }\left[ \mathbb{E}_{\alpha }\left(
\lambda \left( \psi \left( x\right) -\psi \left( a\right) \right) ^{\alpha
}\right) \right]   \notag \\
&=&\overset{\infty }{\underset{k=0}{\sum }}\frac{\lambda ^{k}}{\Gamma \left(
\alpha k+1\right) }^{H}\mathbb{D}_{a+}^{\alpha ,\beta ;\psi }\left( \psi
\left( x\right) -\psi \left( a\right) \right) ^{k\alpha }  \notag \\
&=&\lambda \underset{k=1}{\overset{\infty }{\sum }}\frac{\lambda ^{k-1}}{%
\Gamma \left( \alpha k+1\right) }\frac{\Gamma \left( \alpha k+1\right) }{%
\Gamma \left( \alpha k-\alpha +1\right) }\left( \psi \left( x\right) -\psi
\left( a\right) \right) ^{k\alpha -\alpha }  \notag \\
&=&\lambda \underset{k=1}{\overset{\infty }{\sum }}\frac{\lambda
^{k-1}\left( \psi \left( x\right) -\psi \left( a\right) \right) ^{\left(
k-1\right) \alpha }}{\Gamma \left( \left( k-1\right) \alpha +1\right) } 
\notag \\
&=&\lambda f\left( x\right) .
\end{eqnarray*}
\end{proof}

\section{A wide class of fractional derivatives and integrals}

The fractional calculus over time has become an important tool for the development of new mathematical concepts in the theoretical sense and practical sense. So far, there are a variety of fractional operators, in the integral sense or in the differential sense. However, natural problems become increasingly complex and certain fractional operators presented with the specific kernel are restricted to certain problems. So \cite{AHMJ,SAM} it was proposed a fractional integral operator with respect to another function, that is, to a function $\psi$, making such a general operator, in the sense that it is enough to choose a function $\psi$ and obtain an existing fractional integral operator. In the same way, we have presented, in section 2 the fractional $\psi$-Hilfer derivative with respect to another function. In this section, we present a class of fractional integrals and fractional derivatives, based on the choice of the arbitrary $\psi$ function.

For the class of integrals that will be presented next, we suggest \cite{AHMJ,SAM,RHM,UNT,ECJT,RCA,SRI,UNT2}.

\begin{enumerate}
\item If we consider $\psi \left( x\right) =x$ in Eq.(\ref{A}), we have 
\begin{equation*}
I_{a+}^{\alpha ;x}f\left( x\right) =\frac{1}{\Gamma \left( \alpha \right) }\int_{a}^{x}\left( x-t\right) ^{\alpha -1}f\left( t\right) dt=\text{ } ^{RL}I_{a+}^{\alpha }f\left( x\right), 
\end{equation*}
the Riemann-Liouville fractional integral.

\item If we consider $\psi \left( x\right) =x$ and $a=-\infty $ in Eq.(\ref{A}), we have 
\begin{equation*}
I_{a+}^{\alpha ;x}f\left( x\right) =\frac{1}{\Gamma \left( \alpha \right) }%
\int_{-\infty }^{x}\left( x-t\right) ^{\alpha -1}f\left( t\right) dt= \text{ }^{L}I_{+}^{\alpha }f\left( x\right),
\end{equation*}
the Liouville fractional integral.

\item If we consider $\psi \left( x\right) =x$ and $a=0$ in Eq.(\ref{A}), we have 
\begin{equation*}
I_{a+}^{\alpha ;x}f\left( x\right) =\frac{1}{\Gamma \left( \alpha \right) }\int_{0}^{x}\left( x-t\right) ^{\alpha -1}f\left( t\right) dt= \text{ }^{R}I_{+}^{\alpha }f\left( x\right),
\end{equation*}
the Riemann fractional integral.

\item Choosing $\psi \left( x\right) =\ln x$ and substituting in Eq.(\ref{A}), we have 
\begin{eqnarray*}
I_{a+}^{\alpha ;\ln x}f\left( x\right) &=&\frac{1}{\Gamma \left( \alpha \right) }\int_{a}^{x}\frac{1}{t}\left( \ln x-\ln t\right) ^{\alpha-1}f\left( t\right) dt \\
&=&\frac{1}{\Gamma \left( \alpha \right) }\int_{a}^{x}\left( \ln \frac{x}{t}%
\right) ^{\alpha -1}f\left( t\right) \frac{dt}{t} \\
&=& \text{ } ^{H}I_{a+}^{\alpha }f\left( x\right),
\end{eqnarray*}%
the Hadamard fractional integral.

\item If we consider $\psi \left( x\right) =x^{\sigma }$, $g\left( x\right) =x^{\sigma \eta }f\left( x\right) $ and substituting in Eq.(\ref{A}), we get 
\begin{eqnarray*}
x^{-\sigma \left( \alpha +\eta \right) }I_{a+}^{\alpha ;x^{\sigma }}g\left(
x\right) &=&x^{-\sigma \left( \alpha +\eta \right) }I_{a+}^{\alpha
;x^{\sigma }}\left( x^{\sigma \eta }f\left( x\right) \right) \\
&=&\frac{\sigma x^{-\sigma \left( \alpha +\eta \right) }}{\Gamma \left(
\alpha \right) }\int_{a}^{x}t^{\sigma \eta +\sigma -1}\left( x^{\sigma
}-t^{\sigma }\right) ^{\alpha -1}f\left( t\right) dt \\
&=&\text{ } ^{EK}I_{a+,\sigma ,\eta }^{\alpha }f\left( x\right),
\end{eqnarray*}%
the Erdèlyi-Kober fractional integral.

\item If we consider $\psi \left( x\right) =x^{\sigma }$, $g\left( x\right)
=x^{\sigma \eta }f\left( x\right) ,$ $a=0$ and substituting in Eq.(\ref{A}), we obtain
\begin{eqnarray*}
x^{-\sigma \left( \alpha +\eta \right) }I_{0+}^{\alpha ;x^{\sigma }}g\left(
x\right) &=&x^{-\sigma \left( \alpha +\eta \right) }I_{0+}^{\alpha
;x^{\sigma }}\left( x^{\sigma \eta }f\left( x\right) \right) \\
&=&\frac{\sigma x^{-\sigma \left( \alpha +\eta \right) }}{\Gamma \left(
\alpha \right) }\int_{0}^{x}t^{\sigma \eta +\sigma -1}\left( x^{\sigma
}-t^{\sigma }\right) ^{\alpha -1}f\left( t\right) dt \\
&=&\text{ } ^{E}I_{0+,\sigma ,\eta }^{\alpha }f\left( x\right),
\end{eqnarray*}%
the Erdèlyi fractional integral.

\item If we consider $\psi \left( x\right) =x$, $g\left( x\right) =x^{\eta}f\left( x\right) ,$ $a=0$ and substituting in Eq.(\ref{A}), we obtain
\begin{eqnarray*}
x^{-\left( \alpha +\eta \right) }I_{0+}^{\alpha ;x}g\left( x\right)
&=&x^{-\alpha -\eta }I_{0+}^{\alpha ;x}\left( x^{\eta }f\left( x\right)
\right) \\
&=&\frac{x^{-\alpha -\eta }}{\Gamma \left( \alpha \right) }%
\int_{0}^{x}t^{\eta }\left( x-t\right) ^{\alpha -1}f\left( t\right) dt \\
&=&\text{ } ^{K}I_{0+,\eta }^{\alpha }f\left( x\right),
\end{eqnarray*}%
the Kober fractional integral.

\item If we consider $\psi \left( x\right) =x^{\rho }$, $g\left( x\right) =x^{\rho \eta }f\left( x\right) $ and substituting in Eq.(\ref{A}), we obtain
\begin{eqnarray*}
\frac{x^{\kappa }}{\rho ^{\beta }}I_{a+}^{\alpha ;x^{\rho }}g\left( x\right)
&=&\frac{x^{\kappa }}{\rho ^{\beta }}I_{a+}^{\alpha ;x^{\sigma }}\left(
x^{\rho \eta }f\left( x\right) \right) \\
&=&\frac{x^{\kappa }\rho ^{1-\beta }}{\Gamma \left( \alpha \right) }%
\int_{a}^{x}t^{\rho \eta +\rho -1}\left( x^{\rho }-t^{\rho }\right) ^{\alpha
-1}f\left( t\right) dt \\
&=&\text{ } ^{\rho }\mathcal{I}_{a+,\eta ,\kappa }^{\alpha ,\beta }f\left(
x\right),
\end{eqnarray*}%
the generalized fractional integral that unify another six fractional
integral.

\item Choosing $\psi \left( x\right) =x^{\rho }$ and substituting in Eq.(\ref{A}), we have
\begin{eqnarray*}
\frac{1}{\rho ^{\alpha }}I_{a+}^{\alpha ;x^{\rho }}f\left( x\right) &=&\frac{%
\rho ^{1-\alpha }}{\Gamma \left( \alpha \right) }\int_{a}^{x}t^{\rho
-1}\left( x^{\rho }-t^{\rho }\right) ^{\alpha -1}f\left( t\right) dt \\&=&\text{ } ^{\rho }\mathcal{I}_{a+}^{\alpha }f\left( x\right),
\end{eqnarray*}
the Katugampola fractional integral.

\item Choosing $\psi \left( x\right) =x,$ $g\left( x\right) =\mathbb{E}%
_{\alpha ,\beta }^{\gamma }\left( \omega \left( x-t\right) ^{\alpha }\right)f\left( x\right) $ and substituting in Eq.(\ref{A}), we get 
\begin{eqnarray*}
\Gamma \left( \alpha \right) I_{a+}^{\alpha ;x}g\left( x\right) &=&\Gamma
\left( \alpha \right) I_{a+}^{\alpha ;x}\mathbb{E}_{\alpha ,\beta }^{\gamma
}\left( \omega \left( x-t\right) ^{\alpha }\right) f\left( x\right) \\
&=&\int_{a}^{x}\left( x-t\right) ^{\alpha -1}\mathbb{E}_{\alpha ,\beta
}^{\gamma }\left( \omega \left( x-t\right) ^{\alpha }\right) f\left(
t\right) dt \\
&=&\mathcal{E}_{a+,\alpha ,\beta }^{\omega ,\gamma }f\left( x\right),
\end{eqnarray*}
the Prabhakar fractional integral.

\item If we consider $\psi \left( x\right) =x$, $a=c$ and substituting in Eq.(\ref{A}), we obtain
\begin{eqnarray*}
I_{c+}^{\alpha ;x}f\left( x\right) &=&\frac{1}{\Gamma \left( \alpha \right) }\int_{c}^{x}\left( x-t\right) ^{\alpha -1}f\left( t\right) dt \\&=&\mathcal{I}_{c}^{\alpha }f\left( x\right),
\end{eqnarray*}
the Chen fractional integral.

\item For $\psi \left( x\right) =x$, $a=-\infty ,$ $b=\infty $ and substituting in {Eq.(\ref{A})} and {Eq.(\ref{I3})} respectively, we have
\begin{equation*}
\frac{I_{a+}^{\alpha ;x}f\left( x\right) +I_{b-}^{\alpha ;x}f\left( x\right) }{2\cos \left( \frac{\pi \alpha }{2}\right) }=\frac{_{L}I_{+}^{\alpha
}f\left( x\right) +\text{ }_{L}I_{-}^{\alpha }f\left( x\right) }{2\cos \left( \frac{\pi \alpha }{2}\right) }=\text{ }_{RZ}I^{\alpha }f\left( x\right),
\end{equation*}
the Riesz fractional integral.

\item For $\psi \left( x\right) =x$, $a=-\infty ,$ $b=\infty ,$ $0<\theta <1$ and substituting in {Eq.(\ref{A})} and {Eq.(\ref{I3})} respectively, we get 
\begin{eqnarray*}
C_{-}\left( \theta ,\alpha \right) I_{a+}^{\alpha ;x}f\left( x\right)
+C_{+}\left( \theta ,\alpha \right) I_{b-}^{\alpha ;x}f\left( x\right) 
&=&C_{-}\left( \theta ,\alpha \right) I_{+}^{\alpha }f\left( x\right)
+C_{+}\left( \theta ,\alpha \right) I_{-}^{\alpha }f\left( x\right)  \\
&=&\text{ }_{F}I_{\theta }^{\alpha }f\left( x\right) ,
\end{eqnarray*}
the Feller fractional integral, with 
\begin{equation*}
C_{-}\left( \theta ,\alpha \right) =\dfrac{\sin \left( \frac{\left( \alpha
-\theta \right) \pi }{\alpha }\right) }{\sin \left( \pi \theta \right) }%
\text{ and}\ \text{\ }C_{+}\left( \theta ,\alpha \right) =\dfrac{\sin \left( 
\frac{\left( \alpha +\theta \right) \pi }{\alpha }\right) }{\sin \left( \pi
\theta \right) }.
\end{equation*}
\item For $\psi \left( x\right) =x$, $b=\infty $ and substituting in {Eq.(\ref{I3})}, we obtain
\begin{equation*}
I_{b-}^{\alpha ;x}=\frac{1}{\Gamma \left( \alpha \right) }\int_{x}^{\infty
}\left( t-x\right) ^{\alpha -1}f\left( t\right) dt=\text{ }_{L}I_{-}^{\alpha
}=\text{ }_{x}W_{\infty }^{\alpha }f\left( x\right),
\end{equation*}
the Weyl integral fractional.
\end{enumerate}

On the other hand, using the $\psi$-Hilfer fractional derivative operator Eq.(\ref{HIL}) and Eq.(\ref{HIL1}), we present a wide class of fractional derivatives by choosing $\psi$, $a$, $b$ and taking the limit of the parameters $\alpha$ and $\beta$.

For this wide class of fractional derivatives that will be presented next, we suggest \cite{AHMJ,SAM,RHM,UNT,UNT1,ECJT,RCA,SRI}.

\begin{enumerate}
\item Taking the limit $\beta \rightarrow 1$ on both sides of Eq.(\ref{HIL}), we get
\begin{equation*}
^{H}\mathbb{D}_{a+}^{\alpha ,1;\psi }f\left( x\right) =I_{a+}^{n-\alpha
;\psi }\left( \frac{1}{\psi ^{\prime }\left( x\right) }\frac{d}{dx}\right)
^{n}f\left( x\right) =\text{ }^{C}D_{a+}^{\alpha ;\psi },
\end{equation*}
the $\psi -$Caputo fractional derivative with respect to another function.

\item Taking the limit $\beta \rightarrow 0$ on both sides of Eq.(\ref{HIL}), we get
\begin{equation*}
^{H}\mathbb{D}_{a+}^{\alpha ,0;\psi }f\left( x\right) =\left( \frac{1}{\psi
^{\prime }\left( x\right) }\frac{d}{dx}\right) ^{n}I_{a+}^{\left( n-\alpha
\right) ;\psi }f\left( x\right) = \mathcal{D}_{a+}^{\alpha ;\psi },
\end{equation*}
the $\psi -$Riemann-Liouville fractional derivative with respect to another function.

\item Consider the $\psi \left( x\right) =x$ and taking the limit $\beta \rightarrow 1$ on both sides of Eq.(\ref{HIL}), we get
\begin{eqnarray*}
^{H}\mathbb{D}_{a+}^{\alpha ,1;x}f\left( x\right) &=&I_{a+}^{n-\alpha
;x}\left( \frac{d}{dx}\right) ^{n}f\left( x\right)  \nonumber \\
&=&\frac{1}{\Gamma \left( n-\alpha \right) }\int_{a}^{x}\left( x-t\right)
^{n-\alpha -1}\left( \frac{d}{dx}\right) ^{n}f\left( t\right) dt=\text{ }^{C}
D_{a+}^{\alpha }f\left( x\right),
\end{eqnarray*}
the Caputo fractional derivative.

\item Consider the $\psi \left( x\right) =x^{\rho }$ and taking the limit $\beta \rightarrow 0$ on both sides of Eq.\ref{HIL}), we get
\begin{eqnarray*}
\rho ^{\alpha } \text{ } ^{H}\mathbb{D}_{a+}^{\alpha ,0;x^{\rho }}f\left(
x\right) &=&\rho ^{\alpha }\left( \frac{1}{\rho x^{\rho -1}}\frac{d}{dx}%
\right) ^{n}I_{a+}^{n-\alpha ;x^{\rho }}f\left( x\right)  \nonumber \\
&=&\rho ^{\alpha -n+1}\left( \frac{1}{x^{\rho -1}}\frac{d}{dx}\right) ^{n}%
\frac{1}{\Gamma \left( n-\alpha \right) }\int_{a}^{x}t^{\rho -1}\left(
x^{\rho }-t^{\rho }\right) ^{n-\alpha -1}f\left( t\right) dt  \nonumber \\
&=& \text{ }^{\rho }D_{a+}^{\alpha }f\left( x\right),
\end{eqnarray*}
the Katugampola fractional derivative.

\item For $\psi \left( x\right) =x$ and taking the limit $\beta \rightarrow 0$ on both sides of Eq.(\ref{HIL}), we have
\begin{eqnarray*}
^{H}\mathbb{D}_{a+}^{\alpha ,0;x}f\left( x\right) &=&\left( \frac{d}{%
dx}\right) ^{n}I_{a+}^{n-\alpha ;x}f\left( x\right)  \nonumber \\
&=&\left( \frac{d}{dx}\right) ^{n}\frac{1}{\Gamma \left( n-\alpha \right) }%
\int_{a}^{x}\left( x-t\right) ^{n-\alpha -1}f\left( t\right) dt  \nonumber \\
&=&\mathcal{D}_{a+}^{\alpha }f\left( x\right),
\end{eqnarray*}
the Riemann-Liouville fractional derivative.

\item For $\psi \left( x\right) =\ln x$ and taking the limit $\beta \rightarrow 0$ on both sides of Eq.(\ref{HIL}), we have
\begin{eqnarray*}
^{H}\mathbb{D}_{a+}^{\alpha ,0;\ln x}f\left( x\right) &=&\left( x\frac{d}{dx}\right) ^{n}I_{a+}^{n-\alpha ;\ln x}f\left( x\right)  \nonumber
\\
&=&\left( x\frac{d}{dx}\right) ^{n}\frac{1}{\Gamma \left( n-\alpha \right) }\int_{a}^{x}\left( \ln \frac{x}{t}\right) ^{n-\alpha -1}f\left( t\right) \frac{dt}{t}  \nonumber \\
&=&\text{ }^{H}D_{a+}^{\alpha }f\left( x\right),
\end{eqnarray*}
the Hadamard fractional derivative.

\item For $\psi \left( x\right) =\ln x$ and taking the limit $\beta \rightarrow 1$ on both sides of Eq.(\ref{HIL}), we have
\begin{eqnarray*}
^{H}\mathbb{D}_{a+}^{\alpha ,1;\ln x}f\left( x\right)
&=&I_{a+}^{n-\alpha ;\ln x}\left( x\frac{d}{dx}\right) ^{n}f\left( x\right) 
\nonumber \\
&=&\frac{1}{\Gamma \left( n-\alpha \right) }\int_{a}^{x}\left( \ln \frac{x}{t%
}\right) ^{n-\alpha -1}\left( t\frac{d}{dt}\right) ^{n}f\left( t\right) 
\frac{dt}{t}  \nonumber \\
&=&\text{ }^{CH}D_{a+}^{\alpha }f\left( x\right),
\end{eqnarray*}
the Caputo-Hadamard fractional derivative.

\item Consider the $\psi \left( x\right) =x^{\rho }$ and taking the limit $\beta \rightarrow 1$ on both sides of Eq.(\ref{HIL}), we have
\begin{eqnarray*}
\rho ^{\alpha }\text{ }^{H}\mathbb{D}_{a+}^{\alpha ,1;x^{\rho }}f\left(
x\right) &=&\rho ^{\alpha }I_{a+}^{n-\alpha ;x^{\rho }}\left( \frac{1}{\rho
x^{\rho -1}}\frac{d}{dx}\right) ^{n}f\left( x\right)  \nonumber \\
&=&\frac{\rho ^{\alpha -n+1}}{\Gamma \left( n-\alpha \right) }%
\int_{a}^{x}t^{\rho -1}\left( x^{\rho }-t^{\rho }\right) ^{n-\alpha
-1}\left( \frac{1}{t^{\rho -1}}\frac{d}{dt}\right) ^{n}f\left( t\right) dt 
\nonumber \\
&=&\text{ }^{CK}D_{a+}^{\alpha ,\rho }f\left( x\right),
\end{eqnarray*}
the Caputo-Katugampola (Caputo-type) fractional derivative.

\item Consider the $\psi \left( x\right) =\ln x$ and substituting in Eq.(\ref{D2}), we get
\begin{eqnarray}\label{V1}
D_{a+}^{\gamma ;\ln x}f\left( x\right) &=&\left( x\frac{d}{dx}\right)
^{n}I_{a+}^{n-\gamma ;\ln x}f\left( x\right)  \nonumber \\
&=&\left( x\frac{d}{dx}\right) ^{n}\frac{1}{\Gamma \left( n-\gamma \right) }%
\int_{a}^{x}\left( \ln \frac{x}{t}\right) ^{n-\gamma -1}f\left( t\right) 
\frac{dt}{t}  \nonumber \\
&=&\text{ }^{H}D_{a+}^{\gamma }f\left( x\right),
\end{eqnarray}
the Hadamard fractional derivative.

Now, recover that 
\begin{eqnarray}\label{V2}
^{H}\mathbb{D}_{a+}^{\alpha ,\beta ;\psi }f\left( x\right) &=&I_{a+}^{\gamma
-\alpha ;\psi }\left( \frac{1}{\psi ^{\prime }\left( x\right) }\frac{d}{dx}\right)  ^{n}I_{a+}^{\left( 1-\beta \right) \left( n-\alpha \right) ;\psi }f\left( x\right)  \nonumber \\
&=&I_{a+}^{\gamma -\alpha ;\psi }D_{a+}^{\gamma ;\psi }f\left( x\right),
\end{eqnarray}
with $\gamma =\alpha +\beta \left( n-\alpha \right) $ and $D_{a+}^{\gamma ;\psi }\left( \cdot \right) $ is the Riemann-Liouville fractional derivative with respect to another function. Thus, by Eq.(\ref{V1}) and Eq.(\ref{V2}), we have 
\begin{eqnarray*}
^{H}\mathbb{D}_{a+}^{\alpha ,\beta ;\ln x}f\left( x\right) &=&I_{a+}^{\gamma
-\alpha }\text{ }^{H}D_{a+}^{\gamma }f\left( x\right)  \nonumber \\
&=&\text{ }^{H}D_{a+}^{\alpha ,\beta }f\left( x\right),
\end{eqnarray*}
the Hilfer-Hadamard fractional derivative.

\item Consider the $\psi \left( x\right) =x^{\rho }$ and substituting in Eq.(\ref{D2}), we get
\begin{eqnarray}\label{V3}
\mathcal{D}_{a+}^{\gamma ;x^{\rho }}f\left( x\right) &=&\left( \frac{1}{\rho x^{\rho
-1}}\frac{d}{dx}\right) ^{n}I_{a+}^{n-\gamma ;x^{\rho }}f\left( x\right) 
\nonumber \\
&=&\rho ^{1-n}\left( \frac{1}{x^{\rho -1}}\frac{d}{dx}\right) ^{n}\frac{1}{%
\Gamma \left( n-\gamma \right) }\int_{a}^{x}t^{\rho -1}\left( x^{\rho
}-t^{\rho }\right) ^{n-\gamma -1}f\left( t\right) dt  \nonumber \\
&=&\frac{1}{\rho ^{\alpha }}\text{ }^{\rho }D_{a+}^{\alpha }f\left( x\right),
\end{eqnarray}
the Katugampola fractional derivative.

Now, recover that 
\begin{eqnarray}\label{V4}
^{H}\mathbb{D}_{a+}^{\alpha ,\beta ;\psi }f\left( x\right) &=&I_{a+}^{\gamma
-\alpha ;\psi }\left( \frac{1}{\psi ^{\prime }\left( x\right) }\frac{d}{dx}%
\right) ^{n}I_{a+}^{\left( 1-\beta \right) \left( n-\alpha \right) ;\psi
}f\left( x\right)  \nonumber \\
&=&I_{a+}^{\gamma -\alpha ;\psi }\text{ }\mathcal{D}_{a+}^{\gamma ;\psi }f\left( x\right),
\end{eqnarray}
with $\gamma =\alpha +\beta \left( n-\alpha \right) $ and $\mathcal{D}_{a+}^{\gamma ;\psi }\left( \cdot \right) $ is Riemann-Liouville fractional derivative with respect to another function. Thus, by Eq.(\ref{V3} ) and Eq.(\ref{V4}), we have
\begin{eqnarray*}
^{H}\mathbb{D}_{a+}^{\alpha ,\beta ;x^{\rho }}f\left( x\right) &=&^{\rho }I_{a+}^{\gamma -\alpha }\text{ }^{\rho }D_{a+}^{\gamma }f\left( x\right)  \nonumber \\
&=&\text{ }^{\rho }D_{a+}^{\alpha ,\beta }f\left( x\right),
\end{eqnarray*}
the Hilfer-Katugampola fractional derivative.

\item For $\psi \left( x\right) =x,$ $a=0$ and taking the limit $\beta \rightarrow 0$ on both sides of Eq.(\ref{HIL}), we have
\begin{eqnarray*}
^{H}\mathbb{D}_{0+}^{\alpha ,0;\psi }f\left( x\right) &=&\left( \frac{d}{dx}%
\right) ^{n}I_{0+}^{n-\alpha ;\psi }f\left( x\right)  \nonumber \\
&=&^{R}D_{+}^{\alpha }f\left( x\right),
\end{eqnarray*}
the Riemann fractional derivative.

\item For $\psi \left( x\right) =x,$ $a=c$ and taking the limit $\beta \rightarrow 0$ on both sides of Eq.(\ref{HIL}), we have
\begin{eqnarray*}
^{H}\mathbb{D}_{c+}^{\alpha ,0;\psi }f\left( x\right) &=&\left( \frac{d}{dx}%
\right) ^{n}I_{c+}^{n-\alpha ;\psi }f\left( x\right)  \nonumber \\
&=&\left( \frac{d}{dx}\right) ^{n}\frac{1}{\Gamma \left( n-\alpha \right) }%
\int_{c}^{x}\left( x-t\right) ^{n-\alpha -1}f\left( t\right) dt  \nonumber \\
&=&D_{c}^{\alpha }f\left( x\right),
\end{eqnarray*}
the Chen fractional derivative.

\item Consider the $\psi \left( x\right) =x,$ $a=0$, $g\left( x\right)
=f\left( x\right) -f\left( 0\right) $ and taking the limit $\beta
\rightarrow 0$ on both sides of Eq.(\ref{HIL}), we get 
\begin{eqnarray*}
^{H}\mathbb{D}_{0+}^{\alpha ,0;\psi }g\left( x\right) &=&^{H}\mathbb{D}%
_{0+}^{\alpha ,0;\psi }\left( f\left( x\right) -f\left( 0\right) \right) 
\nonumber \\
&=&\left( \frac{d}{dx}\right) ^{n}I_{0}^{n-\alpha ;\psi }\left( f\left(
x\right) -f\left( 0\right) \right)  \nonumber \\
&=&D_{x}^{\alpha }f\left( x\right),
\end{eqnarray*}
the Jumarie fractional derivative.

\item For $\psi \left( x\right) =x,$ $g\left( x\right) =\mathbb{E}_{\rho ,n-\alpha }^{-\gamma }\left[ \omega \left( x-t\right) ^{\rho }\right] f\left( x\right)  $ and taking the limit $\beta \rightarrow 0$ on both sides of Eq.(\ref{HIL}), we get
\begin{eqnarray*}
\Gamma \left( n-\alpha \right) ^{H}\mathbb{D}_{a+}^{\alpha ,0;x}g\left(
x\right) &=&\Gamma \left( n-\alpha \right) ^{H}\mathbb{D}_{a+}^{\alpha
,0;x}\left( \mathbb{E}_{\rho ,n-\alpha }^{-\gamma }\left[ \omega \left( x-t\right)
^{\rho }\right] f\left( x\right) \right)  \nonumber \\
&=&\Gamma \left( n-\alpha \right) \left( \frac{d}{dx}\right)
^{n}I_{a+}^{n-\alpha ;\psi }\left( \mathbb{E}_{\rho ,n-\alpha }^{-\gamma }\left[
\omega \left( x-t\right) ^{\rho }\right] f\left( x\right) \right)  \nonumber
\\
&=&\left( \frac{d}{dx}\right) ^{n}\int_{a}^{x}\left( x-t\right) ^{n-\alpha
-1}\mathbb{E}_{\rho ,n-\alpha }^{-\gamma }\left[ \omega \left( x-t\right) ^{\rho }%
\right] f\left( t\right) dt  \nonumber \\
&=&D_{a+,\gamma ,\alpha }^{\omega ,\rho }f\left( x\right),
\end{eqnarray*}
the Prabhakar fractional derivative.

\item First, taking the limit $\beta \rightarrow 1$ on both sides of Eq.(\ref{HIL}), we have
\begin{equation*}
^{H}\mathbb{D}_{a+}^{\alpha ,1;\psi }f\left( x\right) =I_{a+}^{n-\alpha
;\psi }\left( \frac{1}{\psi ^{\prime }\left( x\right) }\frac{d}{dx}\right)
^{n}f\left( x\right) =\text{ }^{C}D_{a+}^{\alpha ;\psi }f\left( x\right),
\end{equation*}
where $^{C}D_{a+}^{\alpha ;\psi }f\left( x\right) $ is a Caputo fractional derivative with respect to another function.

On the order hand for $\psi \left( x\right) =x^{\sigma }$ and $g\left(
x\right) =x^{\sigma \left( \eta +\alpha \right) }f\left( x\right) $, we get
\begin{equation*}
^{EK}D_{a+,\sigma ,\eta }^{\alpha }f\left( x\right) =x^{-\sigma \eta }\text{ 
}^{C}D_{a+}^{\alpha ;\psi }\left( x^{\sigma \left( \eta +\alpha \right)
}f\left( x\right) \right).
\end{equation*}

Thus, we conclude that
\begin{eqnarray*}
x^{-\sigma \eta }\text{ }^{H}\mathbb{D}_{a+}^{\alpha ,1;\psi }g\left(
x\right) &=&x^{-\sigma \eta }\text{ }^{H}\mathbb{D}_{a+}^{\alpha ,1;\psi
}\left( x^{\sigma \left( \eta +\alpha \right) }f\left( x\right) \right) 
\nonumber \\
&=&^{EK}D_{a+,\sigma ,\eta }^{\alpha }f\left( x\right),
\end{eqnarray*}
the Erdèlyi-Kober fractional derivative.
 
\item If we consider $\psi \left( x\right) =x$, $a=-\infty $ and taking the limit $ \beta \rightarrow 0$ on both sides of Eq.(\ref{HIL1} ), we obtain
\begin{eqnarray*}
^{H}\mathbb{D}_{-\infty }^{\alpha ,0;x}f\left( x\right) &=&\left(- \frac{d}{dx}\right)
^{n}I_{-\infty }^{n-\alpha ;x}f\left( x\right)  \notag \\
&=&\left( -\frac{d}{dx}\right) ^{n}\frac{1}{\Gamma \left( n-\alpha \right) }%
\int_{-\infty }^{x}\left( x-t\right) ^{n-\alpha -1}f\left( t\right) dt 
\notag \\
&=&\text{ }_{L}\mathcal{D}_{+}^{\alpha }f\left( x\right),
\end{eqnarray*}
the Liouville fractional derivative.

\item If we consider $\psi \left( x\right) =x$, $a=-\infty $ and taking the limit $\beta \rightarrow 1$ on both sides of Eq.(\ref{HIL1} ), we obtain,
\begin{eqnarray*}
^{H}\mathbb{D}_{-\infty }^{\alpha ,1;x}f\left( x\right) &=&I_{-\infty }^{n-\alpha
;x}\left(- \frac{d}{dx}\right) ^{n}f\left( x\right)  \notag \\
&=&\frac{1}{\Gamma \left( n-\alpha \right) }\int_{-\infty }^{x}\left(
x-t\right) ^{n-\alpha -1}\left( -\frac{d}{dx}\right) ^{n}f\left( t\right) dt 
\notag \\
&=&\text{ }_{LC}D_{+}^{\alpha }f\left( x\right),
\end{eqnarray*}
the Liouville-Caputo fractional derivative.

\item If we consider $\psi \left( x\right) =x$, $a=-\infty ,$ $b=\infty $ and taking the limit $\beta \rightarrow 0$ on both sides of Eq.(\ref{HIL} ) and Eq.(\ref{HIL1} ) respectively, we obtain
\begin{equation*}
\frac{-\left( \text{ }^{H}\mathbb{D}_{-\infty }^{\alpha ,0;x}f\left( x\right) +\text{ 
}^{H}\mathbb{D}_{\infty }^{\alpha ,0;x}f\left( x\right) \right) }{2\cos \left( \frac{%
\pi \alpha }{2}\right) }=\frac{-\left( \text{ }_{L}\mathcal{D}_{+}^{\alpha }f\left(
x\right) +\text{ }_{L}\mathcal{D}_{-}^{\alpha }f\left( x\right) \right) }{2\cos \left( 
\frac{\pi \alpha }{2}\right) }=\text{ }_{RZ}D^{\alpha }f\left( x\right),
\end{equation*}
the Riesz fractional derivative.

\item If we consider $\psi \left( x\right) =x$, $a=-\infty ,$ $b=\infty ,$ $0<\theta <1$ and taking the limit $\beta \rightarrow 0$ on both sides of Eq.(\ref{HIL}) and Eq.(\ref{HIL1}) respectively, we obtain
\begin{eqnarray*}
&&-\left( C_{+}\left( \theta ,\alpha \right) \text{ }^{H}\mathbb{D}_{-\infty
}^{\alpha ,0;x}f\left( x\right) +C_{-}\left( \theta ,\alpha \right) \text{ }%
^{H}\mathbb{D}_{\infty }^{\alpha ,0;x}f\left( x\right) \right)  \\
&=&-\left( C_{+}\left( \theta ,\alpha \right) \text{ }_{L}\mathcal{D}%
_{+}^{\alpha }f\left( x\right) +C_{-}\left( \theta ,\alpha \right) \text{ }%
_{L}\mathcal{D}_{-}^{\alpha }f\left( x\right) \right)  \\
&=&\text{ }_{F}D_{\theta }^{\alpha }f\left( x\right) ,
\end{eqnarray*}
the Feller fractional derivative with 
\begin{equation*}
C_{-}\left( \theta ,\alpha \right) =\dfrac{\sin \left( \frac{\left( \alpha
-\theta \right) \pi }{\alpha }\right) }{\sin \left( \pi \theta \right) }%
\text{ and$\ $\ }C_{+}\left( \theta ,\alpha \right) =\dfrac{\sin \left( 
\frac{\left( \alpha +\theta \right) \pi }{\alpha }\right) }{\sin \left( \pi
\theta \right) }.
\end{equation*}

\item If we consider $\psi \left( x\right) =x,$ $b=\infty $ and taking limit $\beta \rightarrow 0$ on both sides of the Eq.(\ref{HIL1}), we obtain
\begin{eqnarray*}
\text{ }^{H}\mathbb{D}_{\infty }^{\alpha ,0;x}f\left( x\right) &=&\left( -1\right)
^{n}\left( \frac{d}{dx}\right) ^{n}\frac{1}{\Gamma \left( n-\alpha \right) }%
\int_{x}^{\infty }\left( x-t\right) ^{n-\alpha -1}f\left( t\right) dt  \notag
\\
&=&\text{ }_{L}\mathcal{D}_{-}^{\alpha }f\left( x\right) =\text{ }_{x}D_{\infty
}^{\alpha }f\left( x\right),
\end{eqnarray*}
the Weyl fractional derivative.

\item If we consider $\psi \left( x\right) =x,$ $b=\infty $ and taking the limit $\beta \rightarrow 0$ on both sides of Eq.(\ref{HIL1}), we obtain
\begin{eqnarray*}
^{H}\mathbb{D}_{\infty }^{\alpha ,0;x}f\left( x\right) &=&\underset{N\rightarrow
\infty }{\lim }\left\{ ^{H}D_{N}^{\alpha ,0;x}f\left( x\right) \right\} \\
&=&\underset{N\rightarrow \infty }{\lim }\left\{ \left( -1\right) ^{n}\left( 
\frac{d}{dx}\right) ^{n}\text{ }I_{N}^{n-\alpha ;x}f\left( x\right) \right\}
\\
&=&\left( -1\right) ^{n}\frac{1}{\Gamma \left( n-\alpha \right) }\left( 
\frac{d}{dx}\right) ^{n}\underset{N\rightarrow \infty }{\lim }\left\{
\int_{x}^{N}\left( x-t\right) ^{n-\alpha -1}f\left( t\right) dt\right\} \\
&=&D_{-}^{\alpha }f\left( x\right),
\end{eqnarray*}
the Cassar fractional derivative.

\item If we consider $\psi \left( x\right) =x$ and taking the limit $\beta \rightarrow 1$ on both sides of Eq.(\ref{HIL}) and Eq.(\ref{HIL1}), we obtain
\begin{eqnarray*}
\frac{\text{ }^{H}\mathbb{D}_{a+}^{\alpha ,1;x}f\left( x\right) +\left(
-1\right) ^{n}\text{ }^{H}\mathbb{D}_{b-}^{\alpha ,1;x}f\left( x\right) }{%
2\cos \left( \frac{\pi \alpha }{2}\right) } &=&\frac{^{C}D_{a+}^{\alpha
}f\left( x\right) +\left( -1\right) ^{n}\text{ }^{C}D_{b-}^{\alpha }f\left(
x\right) }{2\cos \left( \frac{\pi \alpha }{2}\right) } \\
&=&\text{ }_{RC}D^{\alpha }f\left( x\right) ,
\end{eqnarray*}
the Caputo-Riesz fractional derivative.
\end{enumerate}

Then, from this wide class of integrals and fractional derivatives that are particular cases of the $\psi$-Hilfer fractional derivative, we conclude: our fractional derivative, introduced in section 2, really this a generalization of numerous fractional derivatives.

\section{Concluding remarks}

The main objective of this paper was to propose a new fractional derivative with respect to another function $\psi$, in the sense of the Hilfer fractional derivative. We discussed some important and interesting results from the $\psi$-Hilfer fractional operator and a example involving Mittag-Leffler functions. We discussed the importance of the operator in order to overcome a wide number of definitions, presenting a class of integrals and fractional derivatives.

An interesting question is to present a generalization of Gronwall inequality via fractional integral of $f$ with respect to another function $\psi$ and to study the existence and uniqueness of Cauchy-type problem by means of the $\psi$-Hilfer fractional derivative. On the other hand, seems to be possible a generalization for the $\psi$-Hilfer fractional operator, just considering the variable order $\alpha(x)$ and type $0\leq\beta\leq 1 $. These topics are subjects of study in a forthcoming paper \cite{JOSE}.


\bibliography{ref}
\bibliographystyle{plain}

\end{document}